\documentclass[a4paper]{amsart}
\usepackage{graphicx}
\usepackage{amscd}
\usepackage{amsmath}
\usepackage{amsfonts}
\usepackage{amssymb}
\usepackage{setspace}
\usepackage{enumerate}         
\usepackage{fixme}
\usepackage{color}
\usepackage{url}
\usepackage{amsthm}
\usepackage{bm}
\usepackage{xy}
\usepackage{enumitem}
\usepackage{tikz-cd}
\usetikzlibrary{cd}
\usepackage{xcolor}

\theoremstyle{plain}
\newtheorem{theorem}{Theorem}[section]

\newtheorem{corollary}[theorem]{Corollary}

\newtheorem{lemma}[theorem]{Lemma}

\newtheorem{proposition}[theorem]{Proposition}

\newtheorem{definition}[theorem]{Definition}

\newtheorem{assumption}[theorem]{Assumption}
\theoremstyle{remark}
\newtheorem{remark}[theorem]{Remark}

\newtheorem{example}[theorem]{Example}

\numberwithin{equation}{section}

\newcommand{\ind}{1\!\kern-1pt \mathrm{I}}
\newcommand{\rsto}{]\!\kern-1.8pt ]}
\newcommand{\lsto}{[\!\kern-1.7pt [}

\renewcommand{\theenumi}{\rm{(\roman{enumi})}}
\vfuzz2pt 
\hfuzz2pt 

\numberwithin{equation}{section}


\newcommand{\diag}{\operatorname{diag}}

\renewcommand{\rho}{\varrho}


\DeclareMathOperator{\Tr}{Tr}

\begin{document}
\title[Markovian lifts of positive semidefinite affine Volterra
processes]{
  Markovian lifts of positive semidefinite affine Volterra type
  processes}

\begin{abstract}
  We consider stochastic partial differential equations appearing as
  Markovian lifts of matrix valued (affine) Volterra type processes
  from the point of view of the generalized Feller property (see e.g.,
  \cite{doetei:10}). We introduce in particular Volterra Wishart
  processes with fractional kernels and values in the cone of positive
  semidefinite matrices. They are constructed from matrix products of
  infinite dimensional Ornstein Uhlenbeck processes whose state space
  are matrix valued measures. Parallel to that we also consider
  positive definite Volterra pure jump processes, giving rise to
  multivariate Hawkes type processes. We apply these affine covariance processes for multivariate (rough) volatility modeling and introduce a (rough) multivariate Volterra Heston type model. 
\end{abstract}

\thanks{The authors are grateful for the support of the ETH Foundation
  and Erwin Schr\"odinger Institut Wien. Christa Cuchiero gratefully
  acknowledges financial support by the Vienna Science and Technology
  Fund (WWTF) under grant MA16-021.}

\keywords{stochastic partial differential equations, affine processes,
  Wishart processes, Hawkes processes, stochastic Volterra processes,
  rough volatility models} \subjclass[2010]{60H15, 60J25}

\author{Christa Cuchiero and Josef Teichmann}

\address{Vienna University of Economics and Business, Welthandelsplatz
  1, A-1020 Vienna and ETH Z\"urich, R\"amistrasse 101, CH-8092
  Z\"urich}
\maketitle

\section{Introduction}\label{sec:intro}

It is the goal of this article to investigate the results of
\cite{cuctei:18} on infinite dimensional Markovian lifts of stochastic
Volterra processes in a multivariate setup: we are mainly interested
in the case where the stochastic Volterra processes take values in the
cone of positive semidefinite matrices $\mathbb{S}^d_+$. We shall
concentrate on the affine case due to its relevance for tractable
\emph{rough covariance modeling}, extending rough volatility
(see e.g.,~\cite{ALV:07, GatJaiRos:14, BFG:16}) to a setting of $d$ ``roughly
correlated'' assets. 

Viewing stochastic Volterra processes from an infinite dimensional perspective allows to dissolve a generic non-Markovanity of the at first sight naturally low dimensional volatility process. Indeed, this approach makes it actually possible to go beyond the univariate case considered so far and  treat the  problem of multivariate rough covariance models for more than one asset.
Moreover, the considered Markovian lifts allow to apply the full machinery of affine processes. We refer to the introduction of \cite{cuctei:18} for an overview of theoretical and practical advantages of Markovian lifts in the context of Volterra type processes.

Let us start now by explaining why the matrix valued positive definite case is
actually more involved than the scalar one in $\mathbb{R}_+$, where
for instance the Volterra Cox-Ingersoll-Ross process takes values (see
e.g., \cite{ER:16, AbiElE:18b, AY:14} where it appears as
variance process in a rough Heston model): consider a standard Wishart
process on $\mathbb{S}_+^d$, as defined in \cite{B:91, CFMT:11}, of
the form
\begin{align}\label{eq:Wishart}
  d X_t = (d-1) \operatorname{Id}_{d} dt + \sqrt{X_t} dW_t + dW_t^\top
  \sqrt{X_t}, \quad X_0 \in \mathbb{S}^d_+.
\end{align}

Here $ \sqrt{.} $ denotes the matrix square root,
$\operatorname{Id}_d$ the identity matrix and $ W $ a $d \times d$
matrix of Brownian motions. The (necessary) presence of the dimension
$ d $ in the drift is an obvious obstruction to infinite dimensional
versions of this equation, which could be projected to obtain Volterra
type equations by the variation of constants formula (see
\cite{cuctei:18} for such a projection on $\mathbb{R}_+$). In order to
circumvent this difficulty we present two approaches in this paper:
\begin{itemize}
\item We develop a theory of infinite dimensional affine Markovian
  lifts of pure jump positive semidefinite Volterra processes.
\item We develop a theory of squares of Gaussian processes in a
  general setting to construct infinite dimensional analogs of Wishart
  processes. Their finite dimensional projections, however, look
  different from naively conjectured Volterra Wishart processes
  following the role model of Volterra Cox-Ingersoll-Ross
  processes. They are also different in dimension one, as outlined
  below.
\end{itemize}

The jump part appears natural and comes without any further
probabilistic problem when constrained to finite variation jumps. Note
that in the (non-Volterra) case of affine processes on positive
semidefinite matrices, quadratic variation jumps are not possible
either (see \cite{M:12}).  With the generalized Feller approach from
\cite{doetei:10, cuctei:18} we obtain a new class of stochastic
Volterra processes taking values in $\mathbb{S}_+^d$ of the form
\begin{align}\label{eq:Volterrajump}
  V_t= h(t)+\int_0^t( K(t-s) V_s + V_s K(t-s) )ds  + \int_0^t K(t-s) dN_s + \int dN_s K(t-s),
\end{align}
where $h: \mathbb{R}_+ \to \mathbb{S}^d_+$ is some deterministic
function, $K$ a (potentially fractional) kernel in
$L^2(\mathbb{R}_+, \mathbb{S}_+^d)$ and $N$ a pure jump process of
finite variation with jump sizes in $\mathbb{S}^d_+$, whose
compensator is a linear function in $V$. This allows for instance to
define a multivariate Hawkes process $\widehat{N}$ (see
\cite{hawkes:1971} for the one-dimensional case) with values in
$\mathbb{N}_0^d$ given by the diagonal entries of $N$, i.e., $\diag(N)=\widehat{N}$ and the compensator
of $\widehat{N}_i$ is given by $\int_0^{\cdot}V_{s,ii} ds$ (see
Example \ref{ex:Hawkes}). By means of the affine transform formula for
the infinite dimensional lift of \eqref{eq:Volterrajump}, we are able
to derive an expression for the Laplace transform of $V_t$ which can
be computed by means of matrix Riccati Volterra equations.

The difficulty of the continuous part arises from geometric
constraints, which can however be circumvent by building squares of
unconstrained processes. Let us illustrate the idea in a finite
dimensional setting: Let $W$ be an $n \times d $ matrix of Brownian
motions and let $\nu$ be a matrix in $\mathbb{R}^{d \times dk}$
consisting of $k$ submatrixes $\nu_i \in \mathbb{R}^{d \times d}$,
$i=1, \ldots, k$, i.e., $\nu=(\nu_1, \ldots, \nu_k)$.

Define now a Gaussian process with values in
$\mathbb{R}^{n \times dk}$ by $\gamma:= W\nu$.  Then, by It\^o's product
formula the $\mathbb{R}^{dk \times dk}$ valued process
$\gamma_t^\top \gamma_t $ satisfies the following equation
\begin{align}\label{eq:gammafinite}
  d \gamma_t^\top \gamma_t = n \nu^\top \nu dt + \nu^\top dW_t^\top \gamma_t +
  \gamma_t^\top dW_t \nu.
\end{align}
Following Marie-France Bru \cite[Subsection 5.2]{B:91} and setting
$\lambda_t:=\gamma_t^\top \gamma_t $, this can however also be written
via a $ kd \times kd $ matrix of independent Brownian motions $B$ satisfying
\begin{align}\label{eq:newBM}
  \sqrt{\gamma_t^\top \gamma_t} d B_t \sqrt{\nu^\top \nu} = \gamma_t^\top dW_t \nu
\end{align}
in the more familiar form 
\begin{align}\label{eq:trueWish}
  d \lambda_t=n \nu^\top \nu dt + \sqrt{\nu^\top \nu} dB_t^\top
  \sqrt{\lambda_t} + \sqrt{\lambda_t} dB_t \sqrt{\nu^\top \nu}
  \, .
\end{align}
Our article is devoted to analyze the situation where the index
variable $ \nu $ gets continuous, which is the only possible form of
an infinite dimensional Wishart process.  We believe that
generalized Feller processes are the right arena to achieve this
purpose. In this article we choose measure spaces, but an analogous
analysis can be done in the setting of function spaces as for instance
the Hilbert space setting of \cite{F:01} (see \cite[Section
5.2]{cuctei:18}). In the measure-valued setting we proceed as follows:
let $\gamma$ be an infinite dimensional Ornstein-Uhlenbeck process
taking values in $\mathbb{R}^{n \times d}$-valued regular Borel
measures on $\mathbb{R}_+$. Then Volterra Wishart processes arise as
finite dimensional projections of $\gamma^{\top}(dx_1)\gamma(dx_2)$ on
$\mathbb{S}_+^d$ and can be written as
\begin{equation}\label{eq:VolterraWishart}
  \begin{split}
    V_t&= h(t)+ n \int_0^t  K(t-s) K(t-s) ds \\
    &\quad + \int_0^t K(t-s) dW^{\top}_s Y(t,s) ds+ \int_0^t
    Y(t,s)^{\top} dW_s K(t-s),
  \end{split}
\end{equation}
where $h$ and $K$ are as in \eqref{eq:Volterrajump}, $W$ an
$n \times d $ matrix of Brownian motions and
$Y(t,s)= \int_0^{\infty} e^{-x (t-s)}\gamma_s(dx)$.  As explained in
Remark \ref{rem:squareOU}, $V_t$ corresponds to the matrix square of a
Volterra Ornstein Uhlenbeck process $X_t$, obtained as finite
dimensional projection of $\gamma(dx)$. The Volterra Wishart process
\eqref{eq:VolterraWishart} can then also be written in terms of the
forward process of $X_t$, i.e.
$(\mathbb{E}[X_t|\mathcal{F}_s])_{s\leq t}$, namely
\begin{equation*}
  \begin{split}
    V_t&= h(t)+ n \int_0^t  K(t-s) K(t-s) ds \\
    &\quad + \int_0^t K(t-s) dW^{\top}_s \mathbb{E}[X_t|\mathcal{F}_s]
    ds+ \int_0^t \mathbb{E}[X_t^{\top}|\mathcal{F}_s] dW_s K(t-s).
  \end{split}
\end{equation*}
Note that this is not of standard Volterra form, as e.g.~in
\cite{AbiLarPul:17}, since $Y(t,s)$ or $\mathbb{E}[X_t|\mathcal{F}_s]$
respectively cannot be expressed as a function of $V_t$. By moving to
a Brownian field analogous to \eqref{eq:newBM} it could however be
expressed as a path functional of $(V_s)_{s \leq t}$. For $n=d=1$ it
also gives rise to a different equation than the Volterra CIR process.
We explain the connection between \eqref{eq:VolterraWishart} and
\eqref{eq:gammafinite}-\eqref{eq:trueWish} in detail in Section
\ref{sec:square-OU}. 

Note that by choosing $K$ to be a matrix of
fractional kernels the trajectories of \eqref{eq:VolterraWishart}
become rough, whence $V$ qualifies for rough covariance modeling with
potentially different roughness regimes for different assets and their
covariances. This is in accordance with econometric observations. In Section \ref{sec:cov} we show how such models can be defined: we introduce a (rough) multivariate Volterra Heston type model with jumps and show that it can again be cast in the affine framework. This is particularly relevant for pricing basket or spread options using the Fourier pricing approach.

The remainder of the article is organized as follows: in Section
\ref{sec:not} we introduce some notation and review certain functional
analytic concepts. In Section \ref{sec:genFeller} and
\ref{sec:approximationTheorems}, we recall and extend results on
generalized Feller processes as outlined in \cite{cuctei:18}. In
particular, Theorem \ref{th:invariantspace} provides a result on
invariant (sub)spaces for generalized Feller processes that is crucial
for the square construction as outlined above.  In Sections
\ref{sec:markovianlift_abstract} we apply the presented theory to
SPDEs which are lifts of matrix valued stochastic Volterra jump
processes of type \eqref{eq:Volterrajump}. Section \ref{sec:square-OU}
is devoted to present a theory of infinite dimensional Wishart
processes which in turn give rise to (rough) Volterra Wishart
processes. In Section \ref{sec:cov} we apply these processes for multivariate (rough) volatility modeling.

\subsection{Notation and some functional analytic
  notions}\label{sec:not}

For the background in functional analysis we refer to the excellent
textbook \cite{SchWol:99} as main reference and to the equally
excellent books \cite{engnag:00,paz:83} for the background in strongly
continuous semigroups.

We shall apply the following notations: let $ Y $ be a Banach space
and $ Y^* $ its dual space, i.e.~the space of linear continuous
functionals with the strong dual norm
\[ {\|\lambda\|}_{Y^*} = \sup_{\|y\| \leq 1} | \langle y , \lambda
  \rangle | \, ,
\]
where $ \langle y , \lambda \rangle := \lambda(y) $ denotes the
evaluation of the linear functional $ \lambda $ at the point
$ y \in Y $. Since in the case of equation~\eqref{eq:Volterrajump},
cones $ \mathcal{E} $ of $ Y^* $ will be our statespaces, we denote
the polar cones in pre-dual notation, i.e.
\[
  \mathcal{E}_* = \big \{ y \in Y \, | \; \langle y , \lambda \rangle
  \leq 0 \text{ for all } \lambda \in \mathcal{E} \big \} .
\]
We denote spaces of bounded linear operators from Banach spaces
$ Y_1 $ to $ Y_2 $ by $ L(Y_1,Y_2) $ with norm
\[ {\| A \|}_{L(Y_1,Y_2)} := \sup_{{\|y_1\|}_{Y_1}\leq 1} {\| Ay_1
    \|}_{Y_2} \, .
\]
If $Y_1=Y_2$ we only write $\|\cdot \|_{L(Y_1)}$. On $ Y^* $ we shall
usually consider beside the strong topology (induced by the strong
dual norm) the weak-$*$-topology, which is the weakest locally convex
topology making \emph{all} linear functionals
$ \langle y , \cdot \rangle $ on $ Y^* $ continuous.  Let us recall
the following facts:
\begin{itemize}
\item The weak-$*$-topology is metrizable if and only if $ Y $ is
  finite dimensional: this is due to Baire's category theorem since
  $Y^*$ can be written as a countable union of closed sets, whence at
  least one has to contain an open set, which in turn means that
  compact neighborhoods exist, i.e.~a strictly finite dimensional
  phenomenon.
\item Norm balls $ K_R $ of any radius $ R $ in $ Y^* $ are compact
  with respect to the weak-$*$-topology, which is the Banach-Alaoglu
  theorem.
\item These balls are metrizable if and only if $ Y $ is separable:
  this is true since $ Y $ can be isometrically embedded into
  $ C(K_1) $, where $ y \mapsto \langle y,\cdot \rangle $, for
  $ y \in Y $. Since $ Y $ is separable, its embedded image is
  separable, too, which means -- by looking at the algebra generated
  by $ Y $ in $ C(K_1) $ -- that $ C(K_1) $ is separable, which is the
  case if and only if $ K_1 $ is metrizable.
\end{itemize}
Even though some results are more general, in particular often only
compactness of $K_R$ is used, we shall always assume separability in
this article.

Finally, a family of linear operators $ {(P_t)}_{t \geq 0} $ on a Banach space
$ Y $ with $ P_t P_s = P_{t+s} $ for $ s,t \geq 0 $ and with
$ P_0 =I $ where $I$ denotes the identity is called strongly
continuous semigroup if $ \lim_{t \to 0} P_t y = y $ holds true for
every $ y \in Y $. We denote its generator usually by $ A $ which is
defined as $ \lim_{t \to 0} \frac{P_t y - y}{t} $ for all
$ y \in \operatorname{dom}(A) $, i.e.~the set of elements where the
limit exists. Notice that $ \operatorname{dom}(A) $ is left invariant
by the semigroup $ P $ and that its restriction on the domain equipped
with the operator norm
\[ {\| y \|}_{\operatorname{dom}(A)} := \sqrt{\|y\|^2 + \|Ay\|^2}
\]
is again a strongly continuous semigroup.

Moreover, as already used in the introduction, $\mathbb{S}^d$ denotes
the vector space of symmetric $d \times d$ matrices and
$\mathbb{S}^d_+$ the cone of positive semidefinite ones. Furthermore, we denote by $\diag(A)$ the  vector consisting  of  the diagonal elements of a matrix $A$.

\section{Generalized Feller semigroups and processes}\label{sec:genFeller}

In the context of Markovian lifts of stochastic Volterra processes
(signed) measure valued processes appear in a natural way.  The
generalized Feller framework is taylor-made for such processes, as it
allows to consider non-locally compact state spaces. This we need
explicitely in Section~\ref{sec:square-OU} for Ornstein-Uhlenbeck
processes whose state space are matrix-valued measures. Beyond that
jump processes with unbounded but finite activity can be easily
constructed in this setting, see Proposition
\ref{prop:jump_perturbation} and
Section~\ref{sec:markovianlift_abstract}. We shall first collect some
results from \cite{cuctei:18} and generalize accordingly for the
purposes of this article.

\subsection{Defintions and results}
First we introduce weighted spaces and state a central
Riesz-Markov-Kakutani representation result. The underlying space $X$
here is a completely regular Hausdorff topological space.
\begin{definition}
  A function $\rho\colon X\to(0,\infty)$ is called \emph{admissible
    weight function} if the sets
  $K_R:=\left\{ x\in X\colon \rho(x)\le R \right\}$ are compact and
  separable for all $R>0$.
\end{definition}
An admissible weight function $\rho$ is necessarily lower
semicontinuous and bounded from below by a positive constant. We call
the pair $X$ together with an admissible weight function $\rho$ a
\emph{weighted space}. A weighted space is $\sigma$-compact. In the
following remark we clarify the question of local compactness of
convex subsets $\mathcal{E} \subset X$ when $X$ is a locally convex
topological space and $\rho$ convex.

\begin{remark}\label{rem:local_compactness}
  Let $X$ be a separable locally convex topological space and
  $\mathcal{E}$ a convex subset.  Moreover, let $\rho$ be a
  \emph{convex} admissible weight function.  Then $ \rho $ is
  continuous on $ \mathcal{E} $ if and only if $\mathcal{E}$ is
  locally compact. Indeed if $ \rho $ is continuous on $\mathcal{E}$,
  then of course the topology on $ \mathcal{E} $ is locally compact
  since every point has a compact neighborhood of type
  $ \{ \rho \leq R \} $ for some $ R > 0 $. On the other hand if the
  topology on $ \mathcal{E} $ is locally compact, then for every point
  $ \lambda_0 \in \mathcal{E} $ there is a a convex, compact
  neighborhood $ V \subset \mathcal{E}$ such that
  $ \rho(\lambda)-\rho(\lambda_0) $ is bounded on $ V $ by a number
  $ k > 0 $, whence by convexity
  $ |\rho(s(\lambda-\lambda_0)+\lambda_0)-\rho(\lambda_0)| \leq s k $
  for $ \lambda - \lambda_0 \in s(V-\lambda_0) $ and $ s \in ]0,1]
  $. This in turn means that $ \rho $ is continuous at $ \lambda_0 $.
\end{remark}

From now on $\rho$ shall always denote an admissible weight
function. For completeness we start by putting definitions for general
Banach space valued functions, although in the sequel we shall only
deal with $\mathbb{R}$-valued functions: let $Z$ be a Banach space
with norm ${\lVert \cdot \rVert}_Z $. The vector space
\begin{equation}
  \mathrm{B}^\rho(X;Z):=\left\{ f\colon X\to Z \colon \sup_{x\in X}\rho(x)^{-1}{\lVert f(x)\rVert}_Z < \infty \right\}
\end{equation}
of $ Z $-valued functions $ f $ equipped with the norm
\begin{equation}
  \lVert f\rVert_{\rho}:=\sup_{x\in X}\rho(x)^{-1}{\lVert f(x) \rVert}_Z,
\end{equation}
is a Banach space itself. It is also clear that for $Z$-valued bounded
continuous functions the continuous embedding
$\mathrm{C}_b(X;Z)\subset\mathrm{B}^\rho(X;Z)$ holds true, where we
consider the supremum norm on bounded continuous functions,
i.e. $\sup_{x \in X}\|f(x)\|$.
\begin{definition}
  We define $\mathcal{B}^{\rho}(X;Z)$ as the closure of
  $\mathrm{C}_b(X;Z)$ in $\mathrm{B}^{\rho}(X;Z)$. The normed space
  $\mathcal{B}^{\rho}(X;Z)$ is a Banach space.
\end{definition}

If the range space $Z=\mathbb{R}$, which from now on will be the case,
we shall write $ \mathcal{B}^\rho(X) $ for
$\mathcal{B}^{\rho}(X; \mathbb{R})$ and analogously $B^{\rho}(X)$.

We consider elements of $ \mathcal{B}^\rho(X) $ as continuous
functions whose growth is controlled by $ \rho $. More precisely we
have by \cite[Theorem 2.7]{doetei:10} that $f\in\mathcal{B}^{\rho}(X)$
if and only if $f|_{K_R}\in \mathrm{C}(K_R)$ for all $R>0$ and
\begin{equation}
  \label{eq:Bdecay}
  \lim_{R\to\infty}\sup_{x\in X\setminus K_R}\rho(x)^{-1}\lVert f(x)\rVert=0 \, .
\end{equation}
Additionally, by \cite[Theorem 2.8]{doetei:10} it holds that for every
$f\in\mathcal{B}^\rho(X)$ with $\sup_{x\in X}f(x)>0$, there exists
$z\in X$ such that
\begin{equation}
  \rho(x)^{-1}f(x) \le \rho(z)^{-1}f(z) \quad\text{for all $x\in X$},
\end{equation}
which emphasizes the analogy with spaces of continuous functions
vanishing at $ \infty $ on locally compact spaces.

Let us now state the following crucial representation theorem of Riesz
type:
\begin{theorem}[Riesz representation for
  $\mathcal{B}^\rho(X)$]\label{theorem:rieszrepresentation}
  For every continuous linear functional
  $\ell\colon\mathcal{B}^\rho(X)\to\mathbb{R}$ there exists a finite
  signed Radon measure $\mu$ on $X$ such that
  \begin{equation}
    \ell(f)=\int_{X}f(x)\mu(dx)\quad\text{for all $f\in\mathcal{B}^\rho(X)$.}
  \end{equation}
  Additionally
  \begin{equation}
    \label{eq:rieszrepresentation-psiintbound}
    \int_{X}\rho(x)\lvert \mu\rvert( dx) = \lVert \ell\rVert_{L(\mathcal{B}^\rho(X),\mathbb{R})},
  \end{equation}
  where $\lvert\mu\rvert$ denotes the total variation measure of
  $\mu$.
\end{theorem}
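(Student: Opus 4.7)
The plan is to exploit the Banach lattice structure of $\mathcal{B}^\rho(X)$, decompose $\ell$ into positive parts, construct the representing measure via a Daniell--Stone argument, and then verify the norm identity by an approximation.

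I would first observe that $\mathcal{B}^\rho(X)$ is a Banach lattice under pointwise $\vee$ and $\wedge$: the estimate $|f_1 \vee g_1 - f_2 \vee g_2| \leq |f_1-f_2| + |g_1-g_2|$ lets the lattice operations on $C_b(X)$ extend continuously to the $\|\cdot\|_\rho$-closure, and for $f,g \geq 0$ one has $\|f \vee g\|_\rho = \max(\|f\|_\rho, \|g\|_\rho)$, so $\mathcal{B}^\rho(X)$ is an abstract $M$-space. Its dual is therefore a Banach lattice in which $\ell$ admits a decomposition $\ell = \ell^+ - \ell^-$ with $\ell^\pm \geq 0$ and $\|\ell^\pm\| \leq \|\ell\|$. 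This reduces the representation problem to the case of positive $\ell$, for which we seek a positive finite Radon measure.

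For positive $\ell$, the analytic core is verifying the Daniell property: if $f_n \in \mathcal{B}^\rho(X)$ with $f_n \downarrow 0$ pointwise, then $\|f_n\|_\rho \to 0$. Here the decay condition \eqref{eq:Bdecay} and compactness of the $K_R$ combine cleanly: given $\varepsilon > 0$, choose $R$ with $\sup_{x \notin K_R}\rho(x)^{-1}f_1(x) < \varepsilon$, which persists for all $f_n \leq f_1$; on the compact set $K_R$, Dini's theorem applied to $f_n|_{K_R} \downarrow 0$ gives $\sup_{K_R} f_n \to 0$, and since $\rho \geq c > 0$, also $\sup_{K_R}\rho(x)^{-1}f_n \to 0$. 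Continuity of $\ell$ then yields $\ell(f_n) \to 0$. Daniell--Stone consequently produces a unique Borel measure $\mu^+ \geq 0$ with $\ell(f) = \int f\, d\mu^+$ on $\mathcal{B}^\rho(X)$; finiteness follows because the constant function $1$ lies in $\mathcal{B}^\rho(X)$ with $\|1\|_\rho \leq c^{-1}$, giving $\mu^+(X) = \ell(1) < \infty$. The Radon property is then obtained from $\sigma$-compactness $X = \bigcup_R K_R$ and separability of each $K_R$, reducing inner regularity on Borel sets to inner regularity on each compact piece.

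For the norm identity, one direction is immediate from $|\ell(f)| \leq \int|f|\, d|\mu| \leq \|f\|_\rho \int \rho\, d|\mu|$, which gives $\|\ell\| \leq \int\rho\, d|\mu|$. For the reverse, set $h := d\mu/d|\mu| \in \{\pm 1\}$ and approximate the formal target $\rho \cdot h$ by elements $f_n \in \mathcal{B}^\rho(X)$ with $|f_n| \leq \rho$ and $\int f_n\, d\mu \to \int \rho\, d|\mu|$, yielding $\|\ell\| \geq \int\rho\, d|\mu|$. The main obstacle, in my view, is precisely this last approximation: one must construct $f_n$ that simultaneously preserves the pointwise bound $|f_n| \leq \rho$ (to stay in the unit ball of $\|\cdot\|_\rho$) and converges to $\rho h$ in $L^1(|\mu|)$, despite $\rho$ being only lower semicontinuous and $h$ merely measurable. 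A workable route is to work on each $K_R$ separately: approximate $\rho|_{K_R}$ from below by continuous functions (possible by lsc and compactness), approximate $h$ by $C_b(X)$-functions using a Lusin-type density argument for the Radon measure $|\mu|$, clip the products to stay dominated by $\rho$, and exhaust $R \to \infty$ by dominated convergence using $\rho \in L^1(|\mu|)$ (which is itself ensured a posteriori by $\int\rho\, d|\mu| = \|\ell\| < \infty$).
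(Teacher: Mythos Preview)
The paper does not supply its own proof of this theorem; it is stated as a foundational result carried over from \cite{doetei:10}, so there is no in-paper argument to compare against. Your proposal therefore has to be judged on its own merits.

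The overall architecture---Banach lattice structure of $\mathcal{B}^\rho(X)$, reduction to positive $\ell$, Daniell--Stone via Dini on the compacts $K_R$, and finally the norm identity---is correct and is essentially how one would expect the result to be proved. The Dini argument for the Daniell continuity property is the right mechanism and uses exactly the two ingredients available: compactness of $K_R$ and the tail decay \eqref{eq:Bdecay}.

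There is one soft spot worth tightening. In the reverse inequality $\|\ell\| \geq \int \rho\, d|\mu|$ you invoke dominated convergence with dominating function $\rho \in L^1(|\mu|)$, and you already flag that this is circular. Rather than Lusin-approximating $h = d\mu/d|\mu|$, it is cleaner to treat the positive and negative parts separately: since $\rho$ is lower semicontinuous on a completely regular space, it is the pointwise supremum of the family $\{f \in C_b(X): 0 \le f \le \rho\}$, so monotone convergence gives directly $\int \rho\, d\mu^+ = \sup\{\ell^+(f): 0 \le f \le \rho\} \le \|\ell^+\|$, and similarly for $\mu^-$. The AM-structure you noted makes the dual an AL-space, whence $\|\ell\| = \|\ell^+\| + \|\ell^-\|$; together with $\ell^+ \wedge \ell^- = 0$ (hence $\mu^+ \perp \mu^-$ and $|\mu| = \mu^+ + \mu^-$) this yields $\int \rho\, d|\mu| \le \|\ell\|$ without any circularity or measurable-function approximation.
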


We shall next consider strongly continuous semigroups on
$ \mathcal{B}^\rho(X) $ spaces and recover very similar structures as
well known for Feller semigroups on the space of continuous functions
vanishing at $ \infty $ on locally compact spaces.

\begin{definition}\label{def:genFeller}
  A family of bounded linear operators
  $P_t\colon\mathcal{B}^{\rho}(X)\to\mathcal{B}^{\rho}(X)$ for
  $ t \geq 0 $ is called \emph{generalized Feller semigroup} if
  \begin{enumerate}
    \renewcommand{\theenumi}{{\bf F\arabic{enumi}}}
  \item
    \label{enu:defgenfeller-0id}
    $P_0=I$, the identity on $\mathcal{B}^{\rho}(X)$,
  \item $P_{t+s}=P_tP_s$ for all $t$, $s\ge 0$,
  \item
    \label{enu:defgenfeller-pwconv}
    for all $f\in\mathcal{B}^{\rho}(X)$ and $x\in X$,
    $\lim_{t\to 0}P_t f(x)=f(x)$,
  \item
    \label{enu:defgenfeller-bound}
    there exist a constant $C\in\mathbb{R}$ and $\varepsilon>0$ such
    that for all $t\in [0,\varepsilon]$,
    $\lVert P_t\rVert_{L(\mathcal{B}^{\rho}(X))}\le C $. 
  \item
    \label{enu:defgenfeller-positivity}
    $P_t$ is positive for all $t\ge 0$, that is, for
    $f\in\mathcal{B}^{\rho}(X)$, $f\ge 0$, we have $P_t f\ge 0$.
  \end{enumerate}
\end{definition}

We obtain due to the Riesz representation property the following key
theorem:
\begin{theorem}
  \label{theorem:Ttstrongcont}
  Let $(P_t)_{t\ge 0}$ satisfy (i) to (iv) of Definition
  \ref{def:genFeller}.  Then, $(P_t)_{t\ge 0}$ is strongly continuous
  on $\mathcal{B}^{\rho}(X)$, that is,
  \begin{equation}
    \lim_{t\to 0}\lVert P_t f-f\rVert_{\rho}=0
    \quad\text{for all $f\in\mathcal{B}^{\rho}(X)$}.
  \end{equation}
\end{theorem}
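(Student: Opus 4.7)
The plan is to upgrade the pointwise, uniformly bounded convergence $P_tf(x)\to f(x)$ from~\ref{enu:defgenfeller-pwconv}--\ref{enu:defgenfeller-bound} to $\rho$-norm convergence by exploiting the Riesz representation Theorem~\ref{theorem:rieszrepresentation}. First I would propagate~\ref{enu:defgenfeller-bound} to a global exponential estimate $\lVert P_t\rVert_{L(\mathcal{B}^\rho(X))}\le Me^{\omega t}$ for all $t\ge 0$ via~\ref{enu:defgenfeller-0id} and the semigroup property. A routine semigroup argument then shows that
\[
  Y_0 := \bigl\{f\in\mathcal{B}^\rho(X) : \lim_{t\to 0}\lVert P_tf - f\rVert_\rho = 0\bigr\}
\]
is a closed, $P_s$-invariant linear subspace of $\mathcal{B}^\rho(X)$, so the task is to prove $Y_0=\mathcal{B}^\rho(X)$.

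Next I would extract weak convergence from~\ref{enu:defgenfeller-pwconv}. For each $x\in X$, $f\mapsto P_tf(x)$ is a continuous linear functional on $\mathcal{B}^\rho(X)$ of norm at most $Me^{\omega t}\rho(x)$, and hence by Theorem~\ref{theorem:rieszrepresentation} is represented by a finite signed Radon measure $p_t(x,\cdot)$ satisfying $\int\rho\,d\lvert p_t(x,\cdot)\rvert\le Me^{\omega t}\rho(x)$. Applying Theorem~\ref{theorem:rieszrepresentation} again, every continuous linear functional on $\mathcal{B}^\rho(X)$ is of the form $f\mapsto\int f\,d\mu$ with $\int\rho\,d\lvert\mu\rvert<\infty$; combining~\ref{enu:defgenfeller-pwconv}, the domination $\lvert P_tf(x)\rvert\le Me^{\omega t}\lVert f\rVert_\rho\rho(x)$, and Lebesgue's dominated convergence theorem yields $\int P_tf\,d\mu\to\int f\,d\mu$, i.e.\ $P_tf\to f$ in the weak topology of $\mathcal{B}^\rho(X)$.

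The hard step is upgrading this weak convergence to $\rho$-norm convergence, and this is where I expect the main obstacle to lie. I would argue by contradiction, using the attainment of the weighted supremum given by \cite[Theorem~2.8]{doetei:10}: if $\lVert P_{t_n}f-f\rVert_\rho\ge\varepsilon>0$ for some $t_n\downarrow 0$, there exist $z_n\in X$ with $\rho(z_n)^{-1}\lvert P_{t_n}f(z_n)-f(z_n)\rvert\ge\varepsilon/2$, and I would split on whether $\rho(z_n)$ stays bounded. In the bounded case the $z_n$ are confined to a compact metrizable $K_R$; extracting a limit $z_n\to z$ and using Banach--Alaoglu on the norm-bounded family of representing measures $\{p_{t_n}(z_n,\cdot)\}\subset\mathcal{B}^\rho(X)^*$, one would identify the weak-$*$ limit with $\delta_z$ using~\ref{enu:defgenfeller-pwconv} at $z$ together with continuity of elements of $\mathcal{B}^\rho(X)$ on $K_R$, yielding a contradiction. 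In the unbounded case $\rho(z_n)\to\infty$, the decay condition~\eqref{eq:Bdecay} of $\lvert f\rvert/\rho$ combined with the Riesz mass bound $\int\rho\,d\lvert p_{t_n}(z_n,\cdot)\rvert\le Me^{\omega t_n}\rho(z_n)$ must be parlayed into a tail estimate uniform in $n$, forcing $\rho(z_n)^{-1}P_{t_n}f(z_n)\to 0$ and again contradicting the lower bound. Turning the Riesz-integrability of the kernels into this uniform-in-$n$ tail estimate is the delicate technical point; once it is in hand, the two cases combine to give $Y_0=\mathcal{B}^\rho(X)$.
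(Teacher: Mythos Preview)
The paper does not prove this theorem; it is quoted (with the hint ``due to the Riesz representation property'') from \cite{doetei:10}. Your route via the Riesz representation to establish weak convergence $P_tf\to f$ in $\mathcal{B}^\rho(X)$ is exactly right and is the substantive step: the domination $\lvert P_tf(x)-f(x)\rvert\le (Me^{\omega t}+1)\lVert f\rVert_\rho\,\rho(x)$ together with Theorem~\ref{theorem:rieszrepresentation} and dominated convergence indeed gives $\int P_tf\,d\mu\to\int f\,d\mu$ for every $\mu$ with $\int\rho\,d\lvert\mu\rvert<\infty$.

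The gap is in your hand-made upgrade from weak to strong convergence. In the bounded case you propose to identify the weak-$*$ cluster point of $p_{t_n}(z_n,\cdot)$ with $\delta_z$ ``using~\ref{enu:defgenfeller-pwconv} at $z$''; but~\ref{enu:defgenfeller-pwconv} only gives $P_tg(z)\to g(z)$ for \emph{fixed} $z$ and says nothing about $P_{t_n}g(z_n)$ along a moving sequence $z_n\to z$. To pin down the limit you would need $P_{t_n}g(z_n)\to g(z)$ for a dense family of test functions $g$, and the natural such family is $Y_0$ itself---so the argument becomes circular. In the unbounded case the difficulty you flag is genuine: from $\int\rho\,d\lvert p_{t_n}(z_n,\cdot)\rvert\le Me^{\omega t_n}\rho(z_n)$ one only concludes that $\lvert P_{t_n}f(z_n)\rvert/\rho(z_n)$ is \emph{bounded}, not that it tends to zero; the total variation $\lvert p_{t_n}(z_n,\cdot)\rvert(X)$ may itself be of order $\rho(z_n)$, so truncating $f$ by a bounded function does not yield the needed uniform tail estimate.

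The clean way to finish---and almost certainly the argument intended in \cite{doetei:10}---is, once weak convergence is in hand, to invoke the general Banach-space fact that a weakly continuous one-parameter semigroup of bounded operators is automatically strongly continuous; see \cite[Theorem~I.5.8]{engnag:00}. This bypasses both case distinctions entirely.
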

One can also establish a positive maximum principle in case that the
semigroup $ P_t $ grows around $ 0 $ like $ \exp(\omega t) $ for some
$\omega \in \mathbb{R}$ with respect to the operator norm on
$ \mathcal{B}^{\rho}(X) $. Indeed, the following theorem proved in
\cite[Theorem 3.3]{doetei:10} is a reformulation of the Lumer-Philips
theorem for pseudo-contraction semigroups using a \emph{generalized
  positive maximum principle} which is formulated in the sequel.
\begin{theorem}
  \label{theorem:Ttposmaxprinciple}
  Let $A$ be an operator on $\mathcal{B}^{\rho}(X)$ with domain $D$,
  and $\omega\in\mathbb{R}$.  $A$ is closable with its closure
  $\overline{A}$ generating a generalized Feller semigroup
  $(P_t)_{t\ge 0}$ with
  $\lVert P_t\rVert_{L(\mathcal{B}^{\rho}(X))}\le\exp(\omega t)$ for
  all $t\ge 0$ if and only if
  \begin{enumerate}
  \item $D$ is dense,
  \item $A-\omega_0$ has dense image for some $\omega_0>\omega$, and
  \item $A$ satisfies the generalized positive maximum principle, that
    is, for $f\in D$ with $(\rho^{-1}f)\vee 0\le \rho(z)^{-1}f(z)$ for
    some $z\in X$, $Af(z)\le \omega f(z)$.
  \end{enumerate}
\end{theorem}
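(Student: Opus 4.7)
The plan is to view this theorem as the Lumer--Phillips theorem adapted to the Banach space $\mathcal{B}^\rho(X)$, with the generalized positive maximum principle playing simultaneously the role of dissipativity (for $A-\omega$) and of a positivity condition on the resolvent. I would therefore treat the two implications separately and, in each, reduce to a classical Hille--Yosida/Lumer--Phillips argument plus a positivity check.

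For the sufficiency direction, my first step is to translate (iii) into dissipativity of $A-\omega$. Given $f\in D$ with $\|f\|_\rho > 0$, Theorem~2.8 of \cite{doetei:10} (cited before the statement) says the supremum in $\|f\|_\rho=\sup_x \rho(x)^{-1}|f(x)|$ is attained at some $z\in X$; replacing $f$ by $-f$ if necessary I may assume $f(z)>0$, so $(\rho^{-1}f)\vee 0 \le \rho(z)^{-1}f(z)$. The hypothesis then gives $Af(z)\le\omega f(z)$, and for any $\omega_0>\omega$
\[
  \|(\omega_0-A)f\|_\rho \ge \rho(z)^{-1}(\omega_0-A)f(z) \ge (\omega_0-\omega)\rho(z)^{-1}f(z)=(\omega_0-\omega)\|f\|_\rho ,
\]
which is exactly dissipativity of $A-\omega$. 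Combined with density of $D$ (i) and density of the range of $A-\omega_0$ (ii), the classical Lumer--Phillips theorem then yields that $A$ is closable and that $\overline{A}$ generates a strongly continuous semigroup $(P_t)_{t\ge 0}$ with $\|P_t\|\le e^{\omega t}$, which are precisely properties F1--F4 of Definition \ref{def:genFeller}. The remaining property F5 (positivity) is obtained by showing first that the resolvent $R(\omega_0,\overline{A})$ is positivity preserving: if $f\ge 0$ but $g:=R(\omega_0,\overline{A})f$ attained a negative value, then $-g$ would attain a positive supremum at some $z$, and applying the generalized positive maximum principle (extended from $D$ to $\dom(\overline{A})$ by approximating $g$ in the graph norm by $g_n\in D$) to $-g$ at $z$ yields $-\overline{A}g(z)\le -\omega g(z)$, hence $f(z)=(\omega_0-\overline{A})g(z)\le(\omega_0-\omega)g(z)<0$, a contradiction. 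Positivity of $P_t$ then follows from the exponential/Yosida formula $P_t=\lim_{n\to\infty}\bigl[\tfrac{n}{t}R(\tfrac{n}{t},\overline{A})\bigr]^n$.

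For the necessity direction, assume $\overline{A}$ generates a generalized Feller semigroup with $\|P_t\|\le e^{\omega t}$. Then $D$ is a core for $\overline{A}$, hence dense in $\mathcal{B}^\rho(X)$, giving (i); the resolvent $R(\omega_0,\overline{A})$ exists for every $\omega_0>\omega$, so $\omega_0-\overline{A}$ is surjective, and approximation of $\dom(\overline{A})$ by $D$ in the graph norm upgrades this to density of $(\omega_0-A)(D)$, giving (ii). For (iii), take $f\in D$ with $(\rho^{-1}f)\vee 0\le \rho(z)^{-1}f(z)$; note $f^+:=f\vee 0\in \mathcal{B}^\rho(X)$ with $\|f^+\|_\rho=\rho(z)^{-1}f(z)$. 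Then positivity and the norm bound of $P_t$ yield
\[
  P_t f(z) \le P_t f^+(z) \le \rho(z)\|P_t f^+\|_\rho \le e^{\omega t}\rho(z)\|f^+\|_\rho = e^{\omega t} f(z),
\]
so dividing by $t$ and letting $t\downarrow 0$ gives $Af(z)\le\omega f(z)$, which is the generalized positive maximum principle.

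I expect the main obstacle to be the positivity step in the sufficiency direction: extending the generalized positive maximum principle from $D$ to $\dom(\overline{A})$ is delicate because the maximizing point $z$ depends on the approximating sequence, so a naive pointwise limit need not work. This is handled by observing that any sequence $g_n\in D$ converging to $g$ in the graph norm also converges uniformly on each compact level set $K_R=\{\rho\le R\}$ (by the characterization of $\mathcal{B}^\rho(X)$ via \eqref{eq:Bdecay} and continuity of $g_n|_{K_R}$), which allows the maximum point for $-g$ on $K_R$ to be approximated by near-maximizing points of $-g_n$, and then taking $R$ large enough so that $-g$ attains its supremum inside $K_R$ (using that $-g\in\mathcal{B}^\rho(X)$, again via Theorem~2.8 of \cite{doetei:10}). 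The remaining steps are essentially bookkeeping in the Lumer--Phillips framework.
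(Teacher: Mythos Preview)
The paper does not prove this theorem: it simply states that it is ``a reformulation of the Lumer--Phillips theorem for pseudo-contraction semigroups'' and refers to \cite[Theorem~3.3]{doetei:10} for the proof. Your proposal is precisely such a Lumer--Phillips reduction, and the argument is correct in both directions, so it matches the approach the paper points to.

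One small simplification for the positivity step you flag as the main obstacle: you do not need to control convergence of the maximizing points $z_n$, nor invoke compactness of $K_R$. Convergence $g_n\to g$ in $\mathcal{B}^\rho(X)$ already means $\rho^{-1}g_n\to\rho^{-1}g$ \emph{uniformly} on $X$, and likewise for $Ag_n\to\overline{A}g$ and $f_n:=(\omega_0-A)g_n\to f$. Let $z_n$ be a maximizer of $\rho^{-1}(-g_n)$; the generalized positive maximum principle at $z_n$ gives $f_n(z_n)\le(\omega_0-\omega)g_n(z_n)$. Since $\rho(z_n)^{-1}(-g_n(z_n))\to\sup_x\rho(x)^{-1}(-g(x))=:s>0$, the right-hand side divided by $\rho(z_n)$ tends to $-(\omega_0-\omega)s<0$, while $\rho(z_n)^{-1}f_n(z_n)$ differs from the nonnegative quantity $\rho(z_n)^{-1}f(z_n)$ by something tending to $0$ uniformly---a contradiction. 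This avoids the issue of whether the $z_n$ accumulate anywhere.
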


As a new contribution to the general theorems we shall work out a
statement on invariant subspaces which will be crucial for
constructing squares of infinite dimensional OU-processes.

\begin{theorem}\label{th:invariantspace}
  Let $ X$ be a weighted space with weight $ \rho_1$, and
  $ q : X \to q(X) $ be a (surjective) continuous map from
  $ (X,\rho_1) $ to the weighted space $ (q(X),\rho_2) $. Let
  $ P^{(1)} $ be a generalized Feller semigroup acting on
  $ \mathcal{B}^{\rho_1}(X) $. Assume that
  $ \rho_2 \circ q \leq \rho_1 $ on $ X$. Let $D$ be a dense subspace
  of $\mathcal{B}^{\rho_2}(q(X)) $. Furthermore, for every
  $ f \in D \subset \mathcal{B}^{\rho_2}(q(X)) $ and for every
  $ t \geq 0 $, there is some $ g \in \mathcal{B}^{\rho_2}(q(X)) $
  such that
  \begin{align}\label{eq:assumptionP1}
    P^{(1)}_t (f \circ q) = g \circ q \, ,
  \end{align}
  and additionally there is a constant $ C \geq 1 $ such that
  \begin{align}\label{eq:assumptionP1_rho}
    P^{(1)}_t (\rho_2 \circ q) \leq C \rho_2 \circ q \, .
  \end{align}
  Then there is a generalized Feller semigroup $ P^{(2)} $ acting on
  $\mathcal{B}^{\rho_2}(q(X)) $ such that
  \begin{align}\label{eq:P2}
    P^{(1)}_t (f \circ q) = (P^{(2)}_t f) \circ q \, .
  \end{align}
\end{theorem}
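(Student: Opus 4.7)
The plan is to define $P^{(2)}$ first on the dense subspace $D$ via the prescription of \eqref{eq:assumptionP1}, then extend by continuity, and finally bootstrap the required semigroup/positivity properties back from $P^{(1)}$ using surjectivity of $q$.

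First I would set, for $f \in D$, $P^{(2)}_t f := g$, where $g \in \mathcal{B}^{\rho_2}(q(X))$ is the function from assumption \eqref{eq:assumptionP1}. Surjectivity of $q$ makes this unambiguous: if $g_1 \circ q = g_2 \circ q$, then $g_1 = g_2$ on $q(X)$. Linearity of $P^{(2)}_t$ on $D$ follows immediately from linearity of $P^{(1)}_t$ and of composition with $q$. To control the norm, pick any $y \in q(X)$ and $x \in X$ with $q(x) = y$; using positivity of $P^{(1)}_t$ together with $|f \circ q| \le \|f\|_{\rho_2}\,(\rho_2 \circ q)$ and \eqref{eq:assumptionP1_rho},
\begin{equation*}
\rho_2(y)^{-1}\,|(P^{(2)}_t f)(y)| = \rho_2(q(x))^{-1}\,|P^{(1)}_t(f\circ q)(x)| \le \|f\|_{\rho_2}\,\rho_2(q(x))^{-1}\,P^{(1)}_t(\rho_2\circ q)(x) \le C\|f\|_{\rho_2}.
\end{equation*}
Hence $P^{(2)}_t$ is a bounded operator on $D$, and since $D$ is dense in $\mathcal{B}^{\rho_2}(q(X))$, it extends uniquely to a bounded operator on $\mathcal{B}^{\rho_2}(q(X))$ with the same bound $C$, giving \ref{enu:defgenfeller-bound}.

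Next I would upgrade \eqref{eq:P2} from $D$ to all of $\mathcal{B}^{\rho_2}(q(X))$. The key preliminary is that $f \circ q \in \mathcal{B}^{\rho_1}(X)$ whenever $f \in \mathcal{B}^{\rho_2}(q(X))$: continuity of $f \circ q$ on each $K_R^{(1)}$ is automatic, and the decay condition \eqref{eq:Bdecay} in $\rho_1$ follows by splitting $X\setminus K_R^{(1)}$ according to whether $\rho_2(q(x))$ is large or small, then using \eqref{eq:Bdecay} for $f$ in $\rho_2$ combined with $\rho_2\circ q \le \rho_1$. Consequently, $\|f \circ q\|_{\rho_1} \le \|f\|_{\rho_2}$, so $f \mapsto f\circ q$ is a bounded linear map $\mathcal{B}^{\rho_2}(q(X)) \to \mathcal{B}^{\rho_1}(X)$. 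For $f \in \mathcal{B}^{\rho_2}(q(X))$, approximate by $f_n \in D$ in $\|\cdot\|_{\rho_2}$: then $f_n \circ q \to f \circ q$ in $\mathcal{B}^{\rho_1}(X)$, hence $P^{(1)}_t(f_n \circ q) \to P^{(1)}_t(f\circ q)$, while $P^{(2)}_t f_n \to P^{(2)}_t f$ in $\mathcal{B}^{\rho_2}(q(X))$, so $(P^{(2)}_t f_n)\circ q \to (P^{(2)}_t f)\circ q$ in $\mathcal{B}^{\rho_1}(X)$; comparing limits yields \eqref{eq:P2} in general.

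Finally I would verify \ref{enu:defgenfeller-0id}, the semigroup property, \ref{enu:defgenfeller-pwconv} and \ref{enu:defgenfeller-positivity}. Condition \ref{enu:defgenfeller-0id} is immediate from $P^{(1)}_0 = I$ and surjectivity of $q$. For the semigroup property, apply \eqref{eq:P2} twice: for $f \in \mathcal{B}^{\rho_2}(q(X))$,
\begin{equation*}
(P^{(2)}_t P^{(2)}_s f) \circ q = P^{(1)}_t\bigl((P^{(2)}_s f)\circ q\bigr) = P^{(1)}_t P^{(1)}_s (f\circ q) = P^{(1)}_{t+s}(f\circ q) = (P^{(2)}_{t+s} f)\circ q,
\end{equation*}
and surjectivity of $q$ gives $P^{(2)}_t P^{(2)}_s = P^{(2)}_{t+s}$. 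For \ref{enu:defgenfeller-pwconv}, fix $y = q(x)$ and use $P^{(2)}_t f(y) = P^{(1)}_t(f\circ q)(x) \to (f\circ q)(x) = f(y)$ as $t \to 0$, which is the pointwise limit from \ref{enu:defgenfeller-pwconv} for $P^{(1)}$. Positivity \ref{enu:defgenfeller-positivity} is equally direct: if $f \ge 0$ then $f\circ q \ge 0$, so $(P^{(2)}_t f)\circ q = P^{(1)}_t(f\circ q) \ge 0$, hence $P^{(2)}_t f \ge 0$ on $q(X)$. Strong continuity of $P^{(2)}$ then follows from Theorem~\ref{theorem:Ttstrongcont}.

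The main obstacle is the bookkeeping of step~(2) above, namely showing that the factorization \eqref{eq:P2} propagates from $D$ to all of $\mathcal{B}^{\rho_2}(q(X))$; this relies crucially on the fact that the pullback $f \mapsto f\circ q$ is continuous $\mathcal{B}^{\rho_2}(q(X)) \to \mathcal{B}^{\rho_1}(X)$, which in turn rests on the compatibility $\rho_2 \circ q \le \rho_1$ together with the decay characterization of $\mathcal{B}^\rho$ spaces.
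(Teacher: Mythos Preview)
Your proof is correct and follows essentially the same route as the paper: define the pullback $M:f\mapsto f\circ q$, factor $P^{(1)}_t$ through $M$, and read off the generalized Feller properties of $P^{(2)}$ from those of $P^{(1)}$ via surjectivity of $q$. The only notable difference is that the paper obtains boundedness of $P^{(2)}_t$ by invoking the closed graph theorem (using that $M$ has closed graph), whereas you derive the bound $\|P^{(2)}_t f\|_{\rho_2}\le C\|f\|_{\rho_2}$ directly from \eqref{eq:assumptionP1_rho} and positivity and then extend by density; your argument is slightly more elementary and also makes the passage from $D$ to all of $\mathcal{B}^{\rho_2}(q(X))$ explicit, which the paper leaves implicit.
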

\begin{proof}
  The continuous map $ q $ defines a linear operator $ M $ from
  $ \mathcal{B}^{\rho_2}(q(X)) $ to $ \mathcal{B}^{\rho_1}(X) $ via
  $ f \mapsto f \circ q $. Notice that $ M $ is bounded, since
  \[ {\| M f \|}_{\rho_1} \leq {\| f \|}_{\rho_2}, \quad f \in
    \mathcal{B}^{\rho_2}(q(X))
  \]
  due to the assumption $ \rho_2 \circ q \leq \rho_1 $. It is also
  injective, but its image is not necessarily closed.  Assumption
  \eqref{eq:assumptionP1} and \eqref{eq:assumptionP1_rho} now mean
  that
  \[
    P^{(1)}_t M f \in \operatorname{rg}(M)
  \]
  for every $ f \in \mathcal{B}^{\rho_2}(q(X)) $ and not only for
  $ f \in D $. Hence we can define
  \[
    P^{(2)}_t f := M^{-1} P^{(1)}_t M f \, ,
  \]
  which is by the very construction a semigroup of linear operators on
  $ \mathcal{B}^{\rho_2}(q(X)) $. Since $ M $ is continuous, its graph
  is closed, whence $ P^{(2)}_t $ is a bounded linear operator by the
  closed graph theorem. Moreover, property (iv) of Definition
  \ref{def:genFeller} holds true due to Assumption
  \eqref{eq:assumptionP1_rho}. Positivity is also preserved, since for
  $f \geq 0$ we have due to Assumption \eqref{eq:assumptionP1} and the
  fact that $P^{(1)}$ is a generalized Feller semigroup,
  \[
    P^{(2)}_t f = M^{-1} P^{(1)}_t M f=M^{-1}\underbrace{ P^{(1)}_t (f
      \circ q)}_{\geq 0}=M^{-1}(g \circ q)=g\geq 0.
  \]
  Here, $g$ is nonnegative due the positivity of
  $P^{(1)}_t (f \circ q)$.  By \eqref{eq:assumptionP1} and the
  definition of $P^{(2)}$, \eqref{eq:P2} clearly holds true. Hence,
  \[
    \lim_{t \to 0} P^{(2)}_t f (q(x)) = \lim_{t \to 0} P^{(1)}_t
    f(q(x)) = f(q(x))
  \]
  for $ x \in X$ and thus property (iii) of Defintion
  \ref{def:genFeller}. Hence all conditions of Definition
  \ref{def:genFeller} are satisfied and we can conclude that the
  operators $ (P_t^{(2)}) $ form a generalized Feller semigroup.
\end{proof}

\begin{remark}
  In the setting of general semigroups it is not clear that
  restrictions of semigroups to (not even closed) subspaces preserve
  strong continuity.
\end{remark}
\begin{remark}
  There are several methods to show that \eqref{eq:assumptionP1} is
  satisfied. In general it is not sufficient to assume that the
  generator of $ P^{(1)} $ has this property.
\end{remark}

\begin{corollary}\label{cor:closed_invariant_subspace}
  Let the assumptions of Theorem \ref{th:invariantspace} except
  Assumption \eqref{eq:assumptionP1_rho} hold true and suppose
  additionally that
  \[
    \rho_2 \circ q = \rho_1. \,
  \]
  Then the same conclusions hold true. In particular the range of the
  operator
  $ M: \mathcal{B}^{\rho_2}(q(X)) \to \mathcal{B}^{\rho_1}(X),\, f
  \mapsto f \circ q $ is closed.
\end{corollary}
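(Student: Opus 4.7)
The key point of the plan is that the strengthened hypothesis $\rho_2 \circ q = \rho_1$ turns the operator $M \colon \mathcal{B}^{\rho_2}(q(X)) \to \mathcal{B}^{\rho_1}(X)$, $f \mapsto f \circ q$, from a bounded injection into a genuine \emph{isometry}; this single observation delivers both the closed-range statement and a replacement for the now-missing hypothesis \eqref{eq:assumptionP1_rho}.

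First I would verify the isometry by the direct computation
\[
\|Mf\|_{\rho_1} = \sup_{x \in X} \rho_1(x)^{-1}|f(q(x))| = \sup_{y \in q(X)} \rho_2(y)^{-1}|f(y)| = \|f\|_{\rho_2},
\]
where the middle equality uses $\rho_1 = \rho_2 \circ q$ together with surjectivity of $q$. Since the isometric image of a Banach space is complete, $\operatorname{rg}(M)$ is automatically closed in $\mathcal{B}^{\rho_1}(X)$, which is the final assertion of the corollary. This closedness then lets me extend \eqref{eq:assumptionP1} from the dense subspace $D$ to arbitrary $f \in \mathcal{B}^{\rho_2}(q(X))$: approximate $f$ by a sequence $f_n \in D$, transport to $Mf_n \to Mf$ by the isometry, apply boundedness of $P^{(1)}_t$ to get $P^{(1)}_t Mf_n \to P^{(1)}_t Mf$, and note that each $P^{(1)}_t Mf_n \in \operatorname{rg}(M)$ together with closedness of $\operatorname{rg}(M)$ forces the limit into $\operatorname{rg}(M)$ as well.

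The last step is to mimic the construction of the theorem by defining $P^{(2)}_t := M^{-1} P^{(1)}_t M$, and to verify the axioms of Definition~\ref{def:genFeller}. The only place where the original proof explicitly invoked \eqref{eq:assumptionP1_rho} was the norm bound \ref{enu:defgenfeller-bound}; here it is replaced by the direct estimate
\[
\|P^{(2)}_t f\|_{\rho_2} = \|M P^{(2)}_t f\|_{\rho_1} = \|P^{(1)}_t Mf\|_{\rho_1} \leq \|P^{(1)}_t\|_{L(\mathcal{B}^{\rho_1}(X))}\, \|f\|_{\rho_2},
\]
so the norm bound for $P^{(1)}$ is inherited by $P^{(2)}$. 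The semigroup law, positivity, and pointwise convergence \ref{enu:defgenfeller-pwconv} transfer verbatim from the proof of Theorem~\ref{th:invariantspace}, using the identity $(P^{(2)}_t f)\circ q = P^{(1)}_t(f\circ q)$. I do not expect a substantive obstacle; the only mildly delicate point is the density extension above, and it is dispatched as soon as one recognises that the isometry automatically makes $\operatorname{rg}(M)$ closed.
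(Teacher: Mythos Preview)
Your proof is correct and is precisely the argument the paper has in mind: the corollary is stated without proof because the single observation $\rho_2\circ q=\rho_1\Rightarrow M$ is an isometry immediately yields closed range, the density extension of \eqref{eq:assumptionP1}, and the norm bound \ref{enu:defgenfeller-bound}, after which the remaining Feller axioms transfer verbatim from the proof of Theorem~\ref{th:invariantspace}.
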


We restate from \cite{cuctei:18} assertions on existence of generalized Feller processes and path
properties. It is remarkable that in this
very general context c\`ag versions exist for countably many
test functions.

\begin{theorem}\label{th:kolmogorov_extension}
  Let $ (P_t)_{t \geq 0} $ be a generalized Feller semigroup with
  $ P_t 1 = 1 $ for $ t \geq 0 $. Then there exists a filtered
  measurable space $ (\Omega,(\mathcal{F}_t)_{t \geq 0}) $ with right
  continuous filtration, and an adapted family of random variables
  $ {(\lambda_t)}_{t \geq 0} $ such that for any initial value
  $ \lambda_0 \in X $ there exists a probability measure
  $ \mathbb{P}^{\lambda_0} $ with
  \[
    \mathbb{E}_{\lambda_0}[f(\lambda_t)] :=
    \mathbb{E}_{\mathbb{P}^{\lambda_0}}[f(\lambda_t)] = P_t
    f(\lambda_0)
  \]
  for $ t \geq 0 $ and every $ f \in \mathcal{B}^\rho(X) $. The Markov
  property holds true, i.e.
  \[
    \mathbb{E}_{\mathbb{P}^{\lambda_0}}[f(\lambda_t) \, | \;
    \mathcal{F}_s] = P_{t-s} f(\lambda_s)
  \]
  almost surely with respect to $ \mathbb{P}^{\lambda_0} $.
\end{theorem}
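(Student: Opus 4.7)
The plan is to realize $(\lambda_t)_{t\geq 0}$ as the coordinate process on the path space $X^{[0,\infty)}$, built from transition probability kernels extracted from the semigroup via the Riesz representation of Theorem~\ref{theorem:rieszrepresentation}.

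First I would produce the kernels. For fixed $t\geq 0$ and $x \in X$, point evaluation $f \mapsto P_t f(x)$ is a bounded linear functional on $\mathcal{B}^\rho(X)$, since $|P_t f(x)| \leq \rho(x)\|P_t f\|_\rho \leq C_t\, \rho(x)\|f\|_\rho$; the local bound F4 is upgraded to a locally uniform bound on any $[0,T]$ by the semigroup property. Riesz representation then yields a signed Radon measure $p_t(x,\cdot)$ with $P_t f(x) = \int_X f(y)\, p_t(x,dy)$; positivity F5 makes it nonnegative, and the normalization $P_t 1 = 1$ makes it a probability measure. To obtain a genuine transition kernel I must verify that $x \mapsto p_t(x,A)$ is Borel for every Borel $A \subset X$: for $f \in \mathcal{B}^\rho(X)$ the map $x \mapsto P_t f(x) = \int f\, dp_t(x,\cdot)$ already lies in $\mathcal{B}^\rho(X)$ and hence is Borel, and a Dynkin-class argument extends this first to indicators of open sets (approximating them by $C_b(X) \subset \mathcal{B}^\rho(X)$ via Urysohn-type functions on each compact sublevel $K_R$) and then to arbitrary Borel sets.

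Second, the semigroup identity $P_{t+s} = P_t P_s$ combined with the uniqueness in the Riesz representation translates directly into Chapman--Kolmogorov:
\[
  p_{t+s}(x,A) = \int_X p_s(y,A)\, p_t(x,dy).
\]
With $p_0(x,\cdot) := \delta_x$, these kernels induce a consistent family of finite-dimensional distributions on $X^{[0,\infty)}$ for each initial point $\lambda_0$. Since $X$ is $\sigma$-compact with separable metrizable compact sublevels $K_R$ built into the admissibility of $\rho$, Kolmogorov's extension theorem produces a probability measure $\mathbb{P}^{\lambda_0}$ on the product $\sigma$-algebra. Taking $\lambda_t$ to be the coordinate projections, $\mathcal{F}^0_t := \sigma(\lambda_s : s \leq t)$, and $\mathcal{F}_t := \bigcap_{u > t}\mathcal{F}^0_u$ yields the required filtered measurable space with right-continuous filtration, and the Markov property on $\mathcal{F}^0_s$, i.e.\ $\mathbb{E}^{\lambda_0}[f(\lambda_t)\mid \mathcal{F}^0_s] = P_{t-s}f(\lambda_s)$, is immediate from the product form of the finite-dimensional distributions.

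Upgrading to $\mathcal{F}_s$ requires a small regularity argument: along a sequence $s_n \downarrow s$ the random variables $P_{t-s_n}f(\lambda_{s_n})$ converge to $P_{t-s}f(\lambda_s)$ in $L^1(\mathbb{P}^{\lambda_0})$ by strong continuity F3 and the locally uniform operator bound from F4 combined with dominated convergence; together with downward martingale convergence for conditional expectations this transports the Markov property from $\mathcal{F}^0_s$ to $\mathcal{F}_s$. The main obstacle I expect is the measurability check in the first step: since $X$ is only completely regular Hausdorff and generally not locally compact, one has to exploit the $\sigma$-compact structure and the separability of each $K_R$ very carefully to make the Riesz measures truly Radon and to justify Borel measurability of $x \mapsto p_t(x,A)$; once that bookkeeping is handled, the classical Kolmogorov--Markov machinery carries through verbatim.
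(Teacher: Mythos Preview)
The paper does not prove this theorem; it merely restates it from \cite{cuctei:18}, so there is no detailed argument in the paper to compare against. Your overall strategy---extracting transition kernels via the Riesz representation of Theorem~\ref{theorem:rieszrepresentation}, checking Chapman--Kolmogorov from the semigroup identity, and invoking a Kolmogorov-type extension on the product space---is exactly the natural route and is what the cited reference carries out.

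The one place where your sketch is genuinely incomplete is the upgrade from the raw filtration $\mathcal{F}^0_s$ to the right-continuous $\mathcal{F}_s$. You claim that $P_{t-s_n}f(\lambda_{s_n}) \to P_{t-s}f(\lambda_s)$ in $L^1$ ``by strong continuity F3 and the locally uniform operator bound from F4 combined with dominated convergence.'' Strong continuity of the semigroup does give $\|P_{t-s_n}f - P_{t-s}f\|_\rho \to 0$, which controls $P_{t-s_n}f(\lambda_{s_n}) - P_{t-s}f(\lambda_{s_n})$. But the remaining piece $P_{t-s}f(\lambda_{s_n}) - P_{t-s}f(\lambda_s)$ requires $\lambda_{s_n} \to \lambda_s$ in some sense, and on the bare coordinate process there is no path regularity; dominated convergence has no almost-sure convergence to dominate. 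One correct way to close this gap, at least for bounded $f$ (which suffices by density of $C_b(X)$), is to compute directly, with $g := P_{t-s}f$,
\[
\mathbb{E}_{\lambda_0}\big[(g(\lambda_{s_n}) - g(\lambda_s))^2\big] = P_{s_n}(g^2)(\lambda_0) - 2\,P_s\big(g\, P_{s_n-s}g\big)(\lambda_0) + P_s(g^2)(\lambda_0),
\]
and observe that each term tends to $P_s(g^2)(\lambda_0)$ by strong continuity (using that $g$ bounded implies $g^2$ and $g\,P_{s_n-s}g$ lie in $\mathcal{B}^\rho(X)$), yielding $L^2$ and hence $L^1$ stochastic continuity. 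Alternatively one first produces a right-continuous modification along the lines of Theorem~\ref{th:path_properties} and reads off the required convergence from path regularity; this is closer to how the argument is organised in \cite{cuctei:18}.
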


\begin{theorem}\label{th:path_properties}
  Let $ (P_t)_{t\geq 0} $ be a generalized Feller semigroup and let
  $ (\lambda_t)_{t \geq 0} $ be a generalized Feller process on a
  filtered probability space. Then for every countable family
  $ {(f_n)}_{n \geq 0} $ of functions in $ \mathcal{B}^\rho(X) $ we
  can choose a version of the processes
  $ {\left( \frac{f_n(\lambda_t)}{\rho(\lambda_t)} \right)}_{t \geq 0}
  $, such that the trajectories are c\`agl\`ad for all $ n \geq 0
  $. 
  If additionally $ P_t \rho \leq \exp(\omega t) \rho $ holds true,
  then $ (\exp(- \omega t) \rho(\lambda_t))_{t \geq 0} $ is a
  super-martingale and can be chosen to have c\`agl\`ad
  trajectories. In this case we obtain that the processes
  $ {\big( f_n(\lambda_t) \big)}_{t \geq 0} $ can be chosen to have
  c\`agl\`ad trajectories.
\end{theorem}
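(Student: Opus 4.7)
The plan is to combine three ingredients adapted to the weighted setting: Doob's regularization of supermartingales under the right-continuous filtration provided by Theorem~\ref{th:kolmogorov_extension}, the resolvent-based martingale attached to the generator $\overline{A}$ of $(P_t)$, and the $\|\cdot\|_\rho$-density of $\operatorname{dom}(\overline{A})$ in $\mathcal{B}^\rho(X)$ coming from strong continuity (Theorem~\ref{theorem:Ttstrongcont}). I would first establish the supermartingale claim (the second half of the statement), and then reduce the c\`agl\`ad property of the ratio to a uniform-in-$(t,\omega)$ approximation argument.

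Under the hypothesis $P_t \rho \leq e^{\omega t}\rho$, even though $\rho$ need not lie in $\mathcal{B}^\rho(X)$, I would apply the Riesz representation (Theorem~\ref{theorem:rieszrepresentation}) to the positive functional $\delta_x\circ P_t$ together with monotone convergence along the truncations $\rho\wedge R\in C_b(X)\subset \mathcal{B}^\rho(X)$ to deduce $\E^x[\rho(\lambda_t)] \leq e^{\omega t}\rho(x) <\infty$ and that $t\mapsto P_t\rho(x)$ is right-continuous. The Markov property then gives
\[
  \E\bigl[e^{-\omega t}\rho(\lambda_t) \bigm| \mathcal{F}_s\bigr]
  = e^{-\omega t} P_{t-s}\rho(\lambda_s) \leq e^{-\omega s}\rho(\lambda_s),
\]
so $(e^{-\omega t}\rho(\lambda_t))_{t\geq 0}$ is a nonnegative supermartingale; Doob's regularization produces a c\`adl\`ag modification, and passing to left limits yields a c\`agl\`ad version of $\rho(\lambda_t)$.

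For the c\`agl\`ad property of $f_n(\lambda_t)/\rho(\lambda_t)$, I would fix $\alpha$ larger than the exponential growth rate of $\|P_t\|_{L(\mathcal{B}^\rho(X))}$ and introduce the resolvent $g_n^{(\alpha)} := \int_0^\infty e^{-\alpha t} P_t f_n\,dt \in \operatorname{dom}(\overline{A})$. A direct Markov-property computation shows that
\[
  e^{-\alpha t} g_n^{(\alpha)}(\lambda_t) + \int_0^t e^{-\alpha s} f_n(\lambda_s)\,ds
\]
is an $(\mathcal{F}_t)$-martingale, all terms being integrable by the bound on $\E[\rho(\lambda_t)]$ obtained above. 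Doob's regularization plus pathwise continuity of the integral term gives a c\`agl\`ad version of $g_n^{(\alpha)}(\lambda_t)$. By strong continuity, $\alpha g_n^{(\alpha)}\to f_n$ in $\|\cdot\|_\rho$, and since $|h(\lambda)|/\rho(\lambda)\leq \|h\|_\rho$ pointwise,
\[
  \sup_{t,\,\omega}\Bigl|\tfrac{f_n(\lambda_t)}{\rho(\lambda_t)} - \tfrac{\alpha g_n^{(\alpha)}(\lambda_t)}{\rho(\lambda_t)}\Bigr| \leq \|f_n - \alpha g_n^{(\alpha)}\|_\rho \longrightarrow 0.
\]
Hence $f_n(\lambda_t)/\rho(\lambda_t)$ is a uniform pathwise limit of c\`agl\`ad processes and is c\`agl\`ad itself off a null set. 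Under the extra hypothesis, multiplying by the c\`agl\`ad version of $\rho(\lambda_t)$ from the previous step restores the c\`agl\`ad property of $f_n(\lambda_t)$. The countable family is handled by the union of the exceptional null sets over $n\geq 0$, which is again null.

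The main technical obstacle is the c\`agl\`ad regularity of the approximants $g_n^{(\alpha)}(\lambda_t)/\rho(\lambda_t)$ in the first part, where no growth control on $\rho$ is assumed and $\rho(\lambda_t)$ has no a priori path regularity. This is resolved by localizing along the compact sublevel sets $K_R = \{\rho\leq R\}$: on each such set $\rho$ is bounded and bounded away from $0$, so the ratio inherits c\`agl\`ad paths from the numerator while $\lambda_t\in K_R$, and the successive entry/exit times into the $K_R$ serve to glue the pieces together into a c\`agl\`ad version of the entire path.
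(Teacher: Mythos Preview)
Your supermartingale argument for the second assertion and the resolvent--martingale construction giving a c\`agl\`ad modification of $g_n^{(\alpha)}(\lambda_t)$ are both sound and match the standard route (also the one underlying the paper's reference~\cite{cuctei:18}). The uniform estimate $\sup_{t,\omega}\bigl|f_n(\lambda_t)/\rho(\lambda_t)-\alpha g_n^{(\alpha)}(\lambda_t)/\rho(\lambda_t)\bigr|\le\|f_n-\alpha g_n^{(\alpha)}\|_\rho$ is correct and does reduce the first assertion to the c\`agl\`ad property of the approximating ratios.

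The genuine gap is in your final paragraph. Knowing that $g_n^{(\alpha)}(\lambda_t)$ is c\`agl\`ad and that $\rho$ is bounded on $K_R$ does \emph{not} let you conclude that $g_n^{(\alpha)}(\lambda_t)/\rho(\lambda_t)$ is c\`agl\`ad: $\rho$ is merely lower semicontinuous, so $\rho(\lambda_t)$ need have no one--sided limits even along paths that remain in $K_R$. A quotient of a c\`agl\`ad numerator by a bounded but irregular denominator is in general not c\`agl\`ad, and the ``gluing along entry/exit times of $K_R$'' does nothing to address this. In fact the c\`agl\`ad object the theorem produces is a \emph{version} and need not equal the pointwise quotient $f_n(\lambda_t)/\rho(\lambda_t)$ pathwise.

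What the paper's approach (visible in Remark~\ref{rem:pathproperties}) does instead is to first use a countable separating family $\{g_k\}\subset\operatorname{dom}(\overline A)$---each $g_k(\lambda_t)$ c\`agl\`ad by your martingale step---to obtain a c\`agl\`ad modification of the process $\lambda$ \emph{itself} with values in the compact metrizable space $K_R$, on the event that the path stays there. One then reads off c\`agl\`ad versions of $h(\lambda_t)$ for any $h\in C(K_R)$; in particular for $f_n|_{K_R}$, and---crucially---a separate c\`agl\`ad version of $1/\rho(\lambda_t)$ is constructed on $K_R$ alongside the $f_n/\rho$, rather than obtained by dividing. Only after both versions exist does one pass to the product (or to $R\to\infty$), which is why the un-normalized $f_n(\lambda_t)$ is in general only c\`ag when $M>1$. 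Your argument needs this extra step: either construct the c\`agl\`ad version of $1/\rho(\lambda_t)$ directly on $K_R$, or bypass division altogether by working with the embedding of $K_R$ induced by the $g_k$'s.
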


\begin{remark}\label{rem:pathproperties}
 In the general case, when
    $ P_t \rho \leq M \exp(\omega t) \rho $ for $M >1$, we obtain for
    $ {\big( f_n(\lambda_t) \big)}_{t \geq 0} $ only c\`ag
    trajectories. To see this, consider the measurable set of sample
    events $ \{ \sup_{0 \leq t \leq 1} \rho(\lambda_t) \leq R \}
    $. Then we can construct on the metrizable compact set
    $ \{ \rho \leq R \} $ a c\`agl\`ad version of the processes
    $ {\left( \frac{f_n(\lambda_t)}{\rho(\lambda_t)} \right)}_{t \leq
      1} $ and $ \left({\frac{1}{\rho(\lambda_t)}}\right)_{t \leq 1} $
    and in turn also of $ {\big( f_n(\lambda_t) \big)}_{t \geq
      0}$. The limit $ R \to \infty $, however, only leads to a c\`ag
    version since we cannot control the right limits.
\end{remark}

\subsection{Dual spaces of Banach spaces} \label{subsec:dual}

The most important playground for our theory will be closed subsets of
duals of Banach spaces, where the weak-$*$-topology appears to be
$ \sigma $-compact due to the Banach-Alaoglu theorem. Assume that
$ \mathcal{E} \subset Y^*$ is a closed subset of the dual space $Y^*$
of some Banach space $Y$ where $Y^{\ast}$ is equipped with its
weak-$*$-topology. Consider a lower semicontinuous function
$\rho\colon \mathcal{E} \to(0,\infty)$ and denote by
$(\mathcal{E},\rho)$ the corresponding weighted space. We have the
following approximation result (see \cite[Theorem 4.2]{doetei:10}) for
functions in $\mathcal{B}^{\rho}(\mathcal{E})$ by cylindrical
functions. Set
\begin{alignat}{2}
  \operatorname{Cyl}_N := \bigl\{
  g(\langle\cdot,y_1\rangle,\dots,\langle\cdot,y_N\rangle)\colon
  &\text{$g\in\mathrm{C}_b^{\infty}(\mathbb{R}^N)$} \notag
  \\
  &\text{and $y_j\in Y$, $j=1,\dots,N$} \bigr\},
\end{alignat}
where $\langle \cdot, \cdot \rangle$ denotes the pairing between $Y^*$
and $Y$. We denote by
$\operatorname{Cyl}:=\bigcup_{N\in\mathbb{N}}\operatorname{Cyl}_N$ the
set of bounded smooth continuous cylinder functions on $\mathcal{E}$.
\begin{theorem}
  \label{theorem:boundedweakcontapprox}
  The closure of $\operatorname{Cyl}$ in
  $\mathrm{B}^\rho(\mathcal{E})$ coincides with
  $\mathcal{B}^\rho(\mathcal{E})$, whose elements appear to be
  precisely the functions $f\in\mathcal{B}^{\rho}(\mathcal{E})$ which
  satisfy \eqref{eq:Bdecay} and that $f|_{K_R}$ is weak-$*$-continuous
  for any $R>0$.
\end{theorem}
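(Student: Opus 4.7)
The result has two distinct parts: the characterization of elements of $\mathcal{B}^{\rho}(\mathcal{E})$ via weak-$*$-continuity on each sublevel set $K_R$ together with the decay \eqref{eq:Bdecay}, and the density of $\operatorname{Cyl}$ in $\mathcal{B}^{\rho}(\mathcal{E})$. The characterization is an immediate application of \cite[Theorem 2.7]{doetei:10}, which is already invoked in the excerpt: since $\mathcal{E}$ carries the weak-$*$-topology, continuity of $f|_{K_R}$ is exactly weak-$*$-continuity on $K_R$. Hence I concentrate on density.

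I would first verify $\operatorname{Cyl}\subset\mathcal{B}^{\rho}(\mathcal{E})$. Any $\phi=g(\langle\cdot,y_1\rangle,\dots,\langle\cdot,y_N\rangle)$ with $g\in\mathrm{C}_b^{\infty}(\mathbb{R}^N)$ is bounded by $\|g\|_{\infty}$ and weak-$*$-continuous as a composition of the weak-$*$-continuous functionals $\langle\cdot,y_j\rangle$ with a continuous function, so $\phi\in\mathrm{C}_b(\mathcal{E})\subset\mathcal{B}^{\rho}(\mathcal{E})$. Since by definition $\mathcal{B}^{\rho}(\mathcal{E})$ is the $\|\cdot\|_{\rho}$-closure of $\mathrm{C}_b(\mathcal{E})$, it suffices to approximate every $g\in\mathrm{C}_b(\mathcal{E})$ by cylinder functions in $\|\cdot\|_{\rho}$.

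Let $g\in\mathrm{C}_b(\mathcal{E})$ with $\|g\|_{\infty}\le B$, fix $\varepsilon>0$, and set $c:=\inf_{\mathcal{E}}\rho>0$. Choose $R>0$ large enough that $(2B+c\varepsilon/2)/R<\varepsilon/2$. The compact Hausdorff set $K_R$ inherits the weak-$*$-topology, and the restrictions $\operatorname{Cyl}|_{K_R}$ form a unital subalgebra of $\mathrm{C}(K_R)$ (products and sums of cylinder functions are cylinder, upon enlarging $N$). It separates points: for distinct $\lambda_1,\lambda_2\in K_R$ some $y\in Y$ satisfies $\langle\lambda_1,y\rangle\ne\langle\lambda_2,y\rangle$, and then $\arctan(\langle\cdot,y\rangle)\in\operatorname{Cyl}$ distinguishes them. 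The Stone--Weierstrass theorem therefore produces $\phi_0\in\operatorname{Cyl}$ with $\sup_{K_R}|g-\phi_0|<c\varepsilon/2$.

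The remaining difficulty, and the real obstacle, is that Stone--Weierstrass only controls $\phi_0$ on $K_R$: a naive $\|\cdot\|_{\rho}$-bound would pick up the potentially large $\|\phi_0\|_{\infty}$ on $K_R^c$. I resolve this by a smooth truncation internal to the cylinder class. Pick $\psi\in\mathrm{C}_b^{\infty}(\mathbb{R})$ with $\psi(t)=t$ for $|t|\le B+c\varepsilon/2$ and $|\psi|\le B+c\varepsilon/2$ throughout; then $\phi:=\psi\circ\phi_0\in\operatorname{Cyl}$ (since $\psi\circ g\in\mathrm{C}_b^{\infty}(\mathbb{R}^N)$), $\phi=\phi_0$ on $K_R$ because $|\phi_0|\le|g|+c\varepsilon/2\le B+c\varepsilon/2$ there, and $\|\phi\|_{\infty}\le B+c\varepsilon/2$ globally. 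On $K_R$ one gets $\rho^{-1}|g-\phi|\le c^{-1}\cdot c\varepsilon/2=\varepsilon/2$, while on $K_R^c$ one uses $\rho>R$ to obtain $\rho^{-1}|g-\phi|\le(|g|+|\phi|)/R\le(2B+c\varepsilon/2)/R<\varepsilon/2$. Thus $\|g-\phi\|_{\rho}<\varepsilon$, proving density. The twin tricks that make this work are the reduction from unbounded $f\in\mathcal{B}^{\rho}(\mathcal{E})$ to bounded $g\in\mathrm{C}_b(\mathcal{E})$ via the very definition of $\mathcal{B}^{\rho}$, and the smooth post-composition $\psi\circ\phi_0$ which keeps $\|\phi\|_{\infty}$ uniformly bounded while staying in $\operatorname{Cyl}$.
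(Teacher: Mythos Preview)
Your argument is correct. The paper itself does not give a proof but defers to \cite{cuctei:18} (the result originating from \cite[Theorem~4.2]{doetei:10}); your Stone--Weierstrass approach on the compact sets $K_R$, combined with the smooth truncation $\psi\circ\phi_0$ to control $\|\phi\|_\infty$ on $K_R^c$, is precisely the standard route taken in those references.
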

\begin{proof} See \cite{cuctei:18}.
\end{proof}

\begin{assumption}\label{ass:generic}
  Let $(\lambda_t)_{t\ge 0}$ denote a time homogeneous Markov process
  on some stochastic basis
  $(\Omega,\mathcal{F}, (\mathcal{F}_t)_{t\ge 0},
  \mathbb{P}^{\lambda_0})$ with values in $\mathcal{E}$.
  
  Then we assume that
  \begin{enumerate}
  \item there are constants $C$ and $\varepsilon>0$ such that
    \begin{equation}
      \label{eq:markovpsibound}
      \mathbb{E}_{\lambda_0}[\rho(\lambda_t)]\le C\rho(\lambda_0)
      \quad\text{for all $\lambda_0\in \mathcal{E}$ and $t\in[0,\varepsilon]$};
    \end{equation}
  \item
    \begin{equation} \label{eq:contint} \lim_{t\to 0}
      \mathbb{E}_{\lambda_0}[f(\lambda_t))] = f(\lambda_0) \quad
      \text{for any $f\in\mathcal{B}^{\rho}(\mathcal{E})$ and
        $\lambda_0\in \mathcal{E}$};
    \end{equation}
  \item for all $f$ in a dense subset of
    $ \mathcal{B}^\rho(\mathcal{E}) $, the map
    $ \lambda_0 \mapsto \mathbb{E}_{\lambda_0}[f(\lambda_t)] $ lies in
    $ \mathcal{B}^\rho(\mathcal{E}) $.
  \end{enumerate}
\end{assumption}

\begin{remark}
  Of course inequality \eqref{eq:markovpsibound} implies that
  $ \lvert\mathbb{E}_{\lambda_0}[f(\lambda_t)]\rvert \leq C
  \rho(\lambda_0) $ for all $ f \in \mathcal{B}^{\rho}(\mathcal{E}) $,
  $ \lambda_0 \in \mathcal{E} $ and $ t \in [0,\varepsilon]$.
\end{remark}

\begin{theorem}
  \label{theorem:strongcontprocess}
  Suppose Assumptions \ref{ass:generic} hold true. Then
  $P_t f(\lambda_0):=\mathbb{E}_{\lambda_0}[f(\lambda_t)]$ satisfies
  the generalized Feller property and is therefore a strongly
  continuous semigroup on $\mathcal{B}^\rho(\mathcal{E})$.
\end{theorem}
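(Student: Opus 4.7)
The plan is to verify each of the five axioms \ref{enu:defgenfeller-0id}--\ref{enu:defgenfeller-positivity} of Definition \ref{def:genFeller} for $P_t f(\lambda_0) := \mathbb{E}_{\lambda_0}[f(\lambda_t)]$ and then invoke Theorem \ref{theorem:Ttstrongcont} to conclude strong continuity. The only genuinely nontrivial point is showing that $P_t$ actually maps $\mathcal{B}^\rho(\mathcal{E})$ into itself (rather than merely into the larger space $\mathrm{B}^\rho(\mathcal{E})$); everything else is a direct consequence of the Markov property and of Assumption \ref{ass:generic}.

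First I would establish the operator bound, which gives \ref{enu:defgenfeller-bound} as a byproduct. For any bounded measurable $f$ with $|f(\lambda)|\leq \|f\|_\rho\,\rho(\lambda)$, the monotonicity of the expectation together with \eqref{eq:markovpsibound} yields
\[
  |P_tf(\lambda_0)| \leq \|f\|_\rho\,\mathbb{E}_{\lambda_0}[\rho(\lambda_t)] \leq C\,\|f\|_\rho\,\rho(\lambda_0)
\]
for all $\lambda_0 \in \mathcal{E}$ and $t \in [0,\varepsilon]$. In particular, $\|P_tf\|_\rho \leq C\|f\|_\rho$ as a map into $\mathrm{B}^\rho(\mathcal{E})$. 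Positivity \ref{enu:defgenfeller-positivity} is immediate since $f\geq 0$ implies $f(\lambda_t)\geq 0$ pathwise, and $P_0=I$ is built into the definition, giving \ref{enu:defgenfeller-0id}. The semigroup identity $P_{t+s}=P_tP_s$ follows from time-homogeneity together with the tower property applied to $\mathcal{F}_s$, via the Markov property of $(\lambda_t)_{t\geq 0}$.

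The main technical point is to show that $P_t$ leaves the closed subspace $\mathcal{B}^\rho(\mathcal{E}) \subset \mathrm{B}^\rho(\mathcal{E})$ invariant. By Assumption \ref{ass:generic}(iii) we know $P_t f \in \mathcal{B}^\rho(\mathcal{E})$ for every $f$ in some dense subset $D \subset \mathcal{B}^\rho(\mathcal{E})$. Given an arbitrary $f \in \mathcal{B}^\rho(\mathcal{E})$, pick a sequence $(f_n) \subset D$ with $\|f_n-f\|_\rho \to 0$. The bound from the first step shows
\[
  \|P_tf_n - P_tf\|_\rho \leq C\,\|f_n-f\|_\rho \longrightarrow 0,
\]
where $P_tf_n, P_tf$ are interpreted as elements of $\mathrm{B}^\rho(\mathcal{E})$ defined pointwise by the expectation. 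Since $\mathcal{B}^\rho(\mathcal{E})$ is closed in $\mathrm{B}^\rho(\mathcal{E})$ and each $P_tf_n$ lies in it, we conclude $P_tf \in \mathcal{B}^\rho(\mathcal{E})$, so that $P_t$ is a bounded linear operator on $\mathcal{B}^\rho(\mathcal{E})$ with operator norm at most $C$.

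Pointwise continuity \ref{enu:defgenfeller-pwconv} is exactly Assumption \ref{ass:generic}(ii). At this stage all the axioms of Definition \ref{def:genFeller} are verified, and Theorem \ref{theorem:Ttstrongcont} upgrades pointwise convergence to norm convergence, yielding strong continuity of $(P_t)_{t\geq 0}$ on $\mathcal{B}^\rho(\mathcal{E})$. The hardest step in practice is the invariance argument above, and it is precisely the reason Assumption \ref{ass:generic}(iii) has been imposed: without it there would be no a priori reason for $\lambda_0 \mapsto \mathbb{E}_{\lambda_0}[f(\lambda_t)]$ to inherit the weak-$*$-continuity on compacts $K_R$ characterizing elements of $\mathcal{B}^\rho(\mathcal{E})$.
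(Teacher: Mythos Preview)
Your argument is correct and is precisely the verification the paper defers to \cite[Section 5]{doetei:10}: the paper's own proof consists of a single line citing that reference, and what you have written is the natural unpacking of that citation. One small point worth making explicit is that your operator bound and invariance are established only for $t\in[0,\varepsilon]$; invariance and boundedness for all $t\geq 0$ then follow by iterating the semigroup identity $P_t=(P_{t/m})^m$ with $m$ large enough that $t/m\leq\varepsilon$, which is implicit in the axioms of Definition~\ref{def:genFeller} but could be stated to make the argument airtight.
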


\begin{proof}
  This follows from the arguments of \cite[Section 5]{doetei:10}.
\end{proof}

\section{Approximation theorems}\label{sec:approximationTheorems}

In order to establish existence of Markovian solutions for general
generators $A$ we could at least in the pseudo-contrative case either directly apply Theorem
\ref{theorem:Ttposmaxprinciple}, where we have to assume that the
generator $A$ satisfies on a dense domain $D$ a generalized positive
maximum principle and that for at least one $ \omega_0 > \omega $ the
range of $ A - \omega_0 $ is dense, or we approximate a general
generator $A$ by (finite activity pure jump) generators $A^n $ and
apply the following (well known) approximation theorems. They also work in the general context when the constant $M >1$.

\begin{theorem}\label{thm:approximation}
  Let $ (P_t^n)_{n \in \mathbb{N}, t\geq 0} $ be a sequence of
  strongly continuous semigroups on a Banach space $Z$ with generators
  $ (A^n)_{n \in \mathbb{N}} $ such that there are uniform (in $n$)
  growth bounds $ M \geq 1 $ and $ \omega \in \mathbb{R} $ with
  \begin{align}\label{eq:growthbounduni}
    \| P^n_t \|_{L(Z)} \leq M \exp(\omega t)
  \end{align}
  for $ t \geq 0 $. Let furthermore
  $ D \subset \cap_n \operatorname{dom}(A^n)$ be a dense subspace with
  the following three properties:
  \begin{enumerate}
  \item $D$ is an invariant subspace for all $ P^n $, i.e.~for all
    $ f \in D $ we have $ P^n_t f \in D $, for $ n \geq 0 $ and
    $ t \geq 0 $.
  \item There is a norm $ {\|.\|}_D $ on $ D $ such that there are
    uniform growth bounds with respect to $ {\|.\|}_D $, i.e.~there
    are $ M_D \geq 1 $ and $ \omega_D \in \mathbb{R} $ with
    \[ {\| P^n_t f \|}_D \leq M_D \exp(\omega_Dt) {\|f\|}_D
    \]
    for $ t \geq 0 $ and for $ n \geq 0 $.
  \item The sequence $ A^n f $ converges as $ n \to \infty $ for each
    $ f \in D $, in the following sense: there exists a sequence of
    numbers $ a_{nm} \to 0 $ as $ n,m \to \infty $ such that
    \[
      \| A^n f - A^m f \| \leq a_{nm} {\| f \|}_D
    \]
    holds true for every $ f \in D $ and for all $n,m$.
  \end{enumerate}
  Then there exists a strongly continuous semigroup
  $ (P_t^\infty)_{t \geq 0} $ with the same growth bound on $ Z $ such
  that $ \lim_{n \to \infty} P^n_t f = P^\infty_t f $ for all
  $ f \in Z $ uniformly on compacts in time and on bounded sets in
  $D$. Furthermore on $ D $ the convergence is of order $ O(a_{nm})
  $. If in addition for each $n \in \mathbb{N}$, $(P_t^n)_{t \geq 0}$
  is a generalized Feller semigroup, then this property transfers also
  to the limiting semigroup.
\end{theorem}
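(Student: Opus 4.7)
My plan is to follow the standard Trotter--Kato style variation-of-constants argument, adapted to the quantitative rate $a_{nm}$, and then verify that the generalized Feller properties pass to the limit.

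\textbf{Step 1: Cauchy estimate on $D$.} For $f \in D$ and $n, m \in \mathbb{N}$, I would use that $D$ is invariant under $P^m$ (so that $A^n - A^m$ can be applied to $P^m_s f$) together with the telescoping identity
\[
  P^n_t f - P^m_t f = \int_0^t \frac{d}{ds}\bigl[ P^n_{t-s} P^m_s f \bigr] \, ds = \int_0^t P^n_{t-s} (A^m - A^n) P^m_s f \, ds.
\]
Applying the uniform growth bound \eqref{eq:growthbounduni}, the uniform $D$-growth bound, and the hypothesis $\|A^n g - A^m g\| \leq a_{nm} \|g\|_D$, I obtain
\[
  \|P^n_t f - P^m_t f\| \leq a_{nm} M M_D \|f\|_D \int_0^t e^{\omega(t-s)} e^{\omega_D s}\, ds,
\]
which tends to $0$ uniformly on compacts in time as $n, m \to \infty$, with rate $O(a_{nm})$ on bounded sets of $D$.

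\textbf{Step 2: Extension to $Z$ and definition of $P^\infty_t$.} Since $\{P^n_t f\}_n$ is Cauchy for $f \in D$, the limit $P^\infty_t f := \lim_{n \to \infty} P^n_t f$ exists, and $P^\infty_t$ extends by density of $D$ and the uniform bound $\|P^n_t\|_{L(Z)} \leq M e^{\omega t}$ to a bounded linear operator on $Z$ with the same growth bound; convergence $P^n_t f \to P^\infty_t f$ then holds for every $f \in Z$, uniformly for $t$ in compacts (by the standard $3\varepsilon$-argument: approximate $f$ by $f_0 \in D$, use the uniform bound and the Cauchy estimate on $f_0$).

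\textbf{Step 3: Semigroup and strong continuity.} The semigroup identity $P^\infty_{t+s} = P^\infty_t P^\infty_s$ follows by passing to the limit in $P^n_{t+s} f = P^n_t P^n_s f$, using uniform boundedness to justify interchanging limit and the outer operator (write $P^n_t P^n_s f - P^\infty_t P^\infty_s f = P^n_t (P^n_s f - P^\infty_s f) + (P^n_t - P^\infty_t) P^\infty_s f$ and control both terms). For strong continuity at $0$, first pick $f \in D$: then $\|P^\infty_t f - f\| \leq \|P^\infty_t f - P^n_t f\| + \|P^n_t f - f\|$, where the first term is $O(a_{nm})$ uniformly on compacts and the second tends to $0$ as $t \to 0$ for fixed $n$; a standard density and uniform-boundedness argument extends this to all $f \in Z$.

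\textbf{Step 4: Generalized Feller property.} Assume now $Z = \mathcal{B}^\rho(X)$ and each $P^n_t$ is a generalized Feller semigroup. Properties \ref{enu:defgenfeller-0id}, F2, and \ref{enu:defgenfeller-bound} of Definition~\ref{def:genFeller} are immediate from Steps 2--3. For \ref{enu:defgenfeller-positivity}, if $f \geq 0$, then $P^n_t f \geq 0$ pointwise for every $n$, and the convergence $P^n_t f \to P^\infty_t f$ in $\mathcal{B}^\rho(X)$ implies pointwise convergence (since evaluation at a point is continuous on $\mathcal{B}^\rho(X)$), hence $P^\infty_t f \geq 0$. Property \ref{enu:defgenfeller-pwconv} is a consequence of strong continuity already established, since norm convergence in $\mathcal{B}^\rho(X)$ implies pointwise convergence. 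The main technical hurdle is the quantitative rate estimate in Step 1 and making sure the approximation argument in Step 2 is uniform on compacts in time; everything else is formal manipulation of limits and uniform bounds.
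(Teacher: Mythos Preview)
Your argument is correct and follows the standard Trotter--Kato variation-of-constants route; there is no gap. The paper itself does not give a proof here but simply refers to \cite{cuctei:18}, so there is nothing substantive to compare against---your Steps 1--4 are exactly the kind of argument one expects to find behind that citation, with the key quantitative input being the estimate in Step~1 combining invariance of $D$, the uniform $D$-growth bound, and the rate $a_{nm}$.
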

\begin{proof} See \cite{cuctei:18}.
\end{proof}

For the purposes of affine processes a slightly more general version
of the approximation theorem is needed, which we state in the sequel:

\begin{theorem}\label{thm:approximation_gen}
  Let $ (P_t^n)_{n \in \mathbb{N}, t\geq 0} $ be a sequence of
  strongly continuous semigroups on a Banach space $Z$ with generators
  $ (A^n)_{n \in \mathbb{N}} $ such that there are uniform (in $n$)
  growth bounds $ M \geq 1 $ and $ \omega \in \mathbb{R} $ with
  \[
    \| P^n_t \|_{L(Z)} \leq M \exp(\omega t)
  \]
  for $ t \geq 0 $. Let furthermore
  $ D \subset \cap_n \operatorname{dom}(A^n)$ be a \emph{subset} with
  the following two properties:
  \begin{enumerate}
  \item The linear span $\operatorname{span}(D)$ is dense.
  \item There is a norm $ {\|.\|}_D $ on $ \operatorname{span}(D) $
    such that for each $ f \in D $ and for $ t > 0 $ there exists a
    sequence $ a^{f,t}_{nm} $, possibly depending on $ f $ and $t$,
    \[
      \| A^n P^m_u f - A^m P^m_u f \| \leq a^{f,t}_{nm} {\| f \|}_D
    \]
    holds true for $ n, m $ and for $ 0 \leq u \leq t$, with
    $ a^{f,t}_{nm} \to 0 $ as $ n,m \to \infty $.
  \end{enumerate}
  Then there exists a strongly continuous semigroup
  $ (P_t^\infty)_{t \geq 0} $ with the same growth bound on $ Z $ such
  that $ \lim_{n \to \infty} P^n_t f = P^\infty_t f $ for all
  $ f \in Z $ uniformly on compacts in time. If in addition for each
  $n \in \mathbb{N}$, $(P_t^n)_{t \geq 0}$ is a generalized Feller
  semigroup, then this property transfers also to the limiting
  semigroup.
\end{theorem}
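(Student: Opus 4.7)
The plan is to follow the classical Trotter--Kato strategy, adapted to the weakened hypotheses of the theorem. The whole argument rests on the telescoping identity
\[
P^n_t f - P^m_t f = \int_0^t P^n_{t-u}(A^m - A^n) P^m_u f \, du,
\]
which is valid for $f \in \dom(A^m)\cap\dom(A^n)$ and follows by differentiating $u \mapsto P^n_{t-u} P^m_u f$ and integrating; since $D \subset \cap_n \dom(A^n)$, this identity is available for every $f \in D$.

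First, for such $f$ and for fixed $T>0$, hypothesis (ii) combined with the uniform operator bound $\|P^n_{t-u}\|_{L(Z)} \leq M e^{\max(0,\omega)T}$ yields
\[
\sup_{0 \leq t \leq T} \|P^n_t f - P^m_t f\| \leq T\, M e^{\max(0,\omega)T}\, a^{f,T}_{nm}\, \|f\|_D,
\]
so $(P^n_\cdot f)_n$ is a Cauchy sequence in $C([0,T],Z)$. By linearity the convergence extends to $\operatorname{span}(D)$, and an $\varepsilon/3$ argument based on the density of $\operatorname{span}(D)$ in $Z$ together with the uniform operator bound extends it to every $f \in Z$, uniformly on compact time intervals. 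This lets me define $P^\infty_t f := \lim_n P^n_t f$, which is linear with $\|P^\infty_t\|_{L(Z)} \leq M e^{\omega t}$.

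Next I would verify the remaining semigroup properties. For the semigroup law, pass to the limit in $P^n_{t+s} f = P^n_t P^n_s f$ via
\[
\|P^n_t P^n_s f - P^\infty_t P^\infty_s f\| \leq M e^{\omega t}\, \|P^n_s f - P^\infty_s f\| + \|(P^n_t - P^\infty_t) P^\infty_s f\|,
\]
both terms tending to zero as $n \to \infty$. For strong continuity at $0$, pick $g \in \operatorname{span}(D)$ close to $f$, choose $n$ so that $P^n_\cdot g$ is uniformly close to $P^\infty_\cdot g$ on $[0,1]$, and then exploit the strong continuity of the fixed semigroup $P^n$ at $0$; the uniform operator bound and density of $\operatorname{span}(D)$ finish the argument. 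If each $P^n$ is a generalized Feller semigroup, positivity transfers under pointwise limits, and the properties (F1)--(F4) of Definition \ref{def:genFeller} have just been established for $P^\infty$.

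The main obstacle compared with Theorem \ref{thm:approximation} is that $D$ is no longer assumed invariant under the approximating semigroups, so one cannot directly control $A^n P^m_u f - A^m P^m_u f$ through some $\|\cdot\|_D$-norm of $P^m_u f$. This gap is precisely filled by hypothesis (ii), which delivers the required estimate along the whole trajectory $u \mapsto P^m_u f$; once this is inserted into the telescoping identity, the remainder of the argument is a standard density and uniform-boundedness exercise.
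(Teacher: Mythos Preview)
Your approach is correct and is the standard Trotter--Kato argument; the paper itself gives no proof here but simply refers to \cite{cuctei:18}, where presumably the same telescoping-identity strategy is carried out. Two small points worth tightening: the sign in your identity is reversed (differentiating $u\mapsto P^n_{t-u}P^m_u f$ and integrating gives $P^n_t f - P^m_t f = \int_0^t P^n_{t-u}(A^n-A^m)P^m_u f\,du$), which of course does not affect the norm estimate; and the identity requires not merely $f\in\dom(A^n)$ but $P^m_u f\in\dom(A^n)$ for all $u\in[0,t]$, a fact that is implicit in hypothesis~(ii) (since the expression $A^n P^m_u f$ must make sense there) but deserves to be stated explicitly.
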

\begin{proof} See \cite{cuctei:18}.
\end{proof}

Our first application of Theorem \ref{thm:approximation} is the next
proposition that extends well-known results on bounded generators
towards unbounded limits.

We repeat here a remark from \cite{cuctei:18} since it helps to
understand the fourth condition on the measures:

\begin{remark}\label{rem:quasicontractive}
  Let $ (P_t)_{t \geq 0} $ be a generalized Feller semigroup with $\| P_t\|_{L(\mathcal{B}^{\rho}(X))}\leq M \exp(\omega t) $ for some $M\geq 1 $ and some $\omega$. Additionally it is assumed to be of transport type, i.e.
  \begin{align}\label{eq:transport}
    P_t f (x) = f ( \psi_t(x))
  \end{align}
  for some continuous map $ \psi_t : X \to X $. Define now a new
  function
  \[
    \tilde \rho (x) := \sup_{t \geq 0} \, \exp(-\omega t) P_t \rho(x)
  \]
  for $ x \in X $. Notice that $ \tilde \rho $ is an admissible weight
  function, since
  \[
    \{ \tilde \rho \leq R \} = \cap_{t \geq 0} \, \{ P_t \rho \leq
    \exp(\omega t) R \} \leq \{ \rho \leq R \}
  \]
  is compact by the definition of $\rho$ and the continuity of $x \mapsto \psi_t(x)$ which leads to an 
  intersection of closed subsets of compacts. Additionally we have
  that
  \[
    \rho \leq \tilde \rho \leq M \rho
  \]
  by the growth bound and therefore the norm on
  $ \mathcal{B}^\rho(X) $ is equivalent to
  \[ {\lVert f \rVert}_{\tilde \rho} = \sup_{x \in X}
    \frac{|f(x)|}{\tilde \rho(x)} \, .
  \]
  Furthermore,
  \[
    \lVert P_tf \rVert_{\tilde \rho} \leq \exp(\omega t) \lVert f
    \rVert_{\tilde \rho}
  \]
  holds for all $t\ge 0$ and $ f \in \mathcal{B}^\rho(X) $.  Indeed,
  this is a consequence of the following estimate
  \begin{align*}
    \lVert P_tf \rVert_{\tilde \rho} &= \sup_{x}\left|\frac{f(\psi_t(x))}{\sup_s \exp(-\omega s) \rho(\psi_s(x))}\right|\leq \sup_{x}\left|\frac{f(\psi_t(x))}{\sup_s \exp(-\omega (t+s)) \rho(\psi_{t+s}(x))}\right|    
    \\
                                     &\leq \exp(\omega t) \sup_{x}\left|\frac{f(\psi_t(x))}{\sup_s \exp(-\omega s)  \rho(\psi_{s}(\psi_t(x)))}\right|\leq  \exp(\omega t) \|f\|_{\tilde{\rho}}.
  \end{align*}
  Hence,
  \[
    |P_tf(x)| \leq \exp(\omega t)\tilde{\rho}(x) \|f\|_{\tilde{\rho}},
  \]
  which implies
  \[
    P_t \tilde \rho \leq \exp(\omega t) \tilde \rho, \quad t \geq 0.
  \]
\end{remark}

\begin{proposition}\label{prop:jump_perturbation}
  Let $ (X,\rho) $ be a weighted space with weight function
  $ \rho \geq 1 $. Consider an operator $A$ on $\mathcal{B}^{\rho}(X)$
  with dense domain $\operatorname{dom}(A)$ generating on
  $ \mathcal{B}^\rho(X) $ a generalized Feller semigroup
  $ (P_t)_{t\geq 0} $ of transport type as in \eqref{eq:transport},
  such that for all $t \geq 0$ we have
  $ \|P_t\|_{L(B^{\rho}(X))} \leq M_1 \exp(\omega t)$ for some $M_1$
  and $ \omega $ and such that
  $ \mathcal{B}^{\sqrt{\rho}}(X) \subset \mathcal{B}^\rho(X) $ is left
  invariant.

  Consider furthermore a family of finite measures $ \mu(x,.)$ for
  $ x \in X $ on $ X $ such that the operator $B$ acts on
  $\mathcal{B}^{\rho}(X)$ by
  \[
    B f (x) : = \int (f(y) - f(x)) \mu(x,dy)
  \]
  for $ x \in X $ yielding continuous functions on
  $ \{\rho \leq R \} $ for $ R \geq 0 $, and such that the following
  properties hold true:
  \begin{itemize}
  \item For all $ x \in X $
    \begin{align}\label{eq:cond1} \int \rho(y)
      \mu(x,dy) \leq M \rho^2 (x),
    \end{align}
    as well as
    \begin{align} \label{eq:cond2} \int \sqrt{\rho(y)} \mu(x,dy) \leq
      M \rho (x),
    \end{align}
    and
    \begin{align}\label{eq:cond3}
      \int \mu(x,dy) \leq M \sqrt{\rho (x)},
    \end{align}
    hold true for some constant $M$.
  \item For some constant $ \widetilde{\omega} \in \mathbb{R} $
    \begin{align}\label{eq:cond4}
      \int \Big | \frac{\sup_{t \geq 0} \exp(-\omega t) P_t \rho(y) -\sup_{t \geq 0} \exp(- \omega t ) P_t \rho(x)}{\sup_{t \geq 0} \exp(-\omega t) P_t \rho(x)} \Big | \mu(x,d y)
      \leq \widetilde{\omega} ,
    \end{align}
    for all $ x \in X $. In particular
    $ y \mapsto \sup_{t \geq 0} \exp(-\omega t) P_t \rho(y) $ should
    be integrable with respect to $ \mu(x,.) $
  \end{itemize}
  Then $ A + B $ generates a generalized Feller semigroup
  $(P_t^{\infty})_{t \geq 0}$ on $ \mathcal{B}^\rho(X) $ satisfying
  $\|P^{\infty}_t\|_{L(\mathcal{B}^{\rho}(X))} \leq M_1 \exp((\omega +
  \tilde{\omega})t)$.
\end{proposition}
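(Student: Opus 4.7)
\medskip

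\noindent\textbf{Plan.} My strategy is a bounded-perturbation/approximation argument in the spirit of Theorem~\ref{thm:approximation_gen}: truncate $B$ to get bounded jump operators $B^n$ so that $A+B^n$ generates a generalized Feller semigroup by standard perturbation theory, then pass to the limit. Throughout I would work with the equivalent weight $\tilde\rho(x):=\sup_{t\geq 0}\exp(-\omega t)P_t\rho(x)$ from Remark~\ref{rem:quasicontractive}, which is admissible, equivalent to $\rho$ (so $\mathcal{B}^\rho(X)=\mathcal{B}^{\tilde\rho}(X)$), and satisfies $P_t\tilde\rho\le e^{\omega t}\tilde\rho$. In this renormed setting, condition~\eqref{eq:cond4} reads simply
\begin{equation*}
B\tilde\rho(x)\;=\;\int(\tilde\rho(y)-\tilde\rho(x))\,\mu(x,dy)\;\le\;\tilde\omega\,\tilde\rho(x),
\end{equation*}
which encodes the desired extra growth of order $\tilde\omega$ caused by the jumps.

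\medskip

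\noindent The first concrete step is to verify that $A+B$ satisfies the generalized positive maximum principle with constant $\omega+\tilde\omega$ on the dense domain $D:=\operatorname{dom}(A)\cap \mathcal{B}^{\sqrt{\tilde\rho}}(X)$ (on which $Bf$ is bounded in $\mathcal{B}^{\tilde\rho}$ by \eqref{eq:cond1}--\eqref{eq:cond3}, since the square root of $\tilde\rho$ tames the quadratic growth of $B$). Indeed, if $f\in D$ and $f/\tilde\rho$ attains its positive maximum at $z$, then $Af(z)\le\omega f(z)$ by the principle for $A$, and
\begin{equation*}
Bf(z)\;\le\;\frac{f(z)}{\tilde\rho(z)}\int(\tilde\rho(y)-\tilde\rho(z))\,\mu(z,dy)\;\le\;\tilde\omega\,f(z),
\end{equation*}
so $(A+B)f(z)\le(\omega+\tilde\omega)f(z)$.

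\medskip

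\noindent Next I define the approximating operators. Pick a continuous cutoff $\chi_n\colon[1,\infty)\to[0,1]$ with $\chi_n(r)=1$ for $r\le n$ and vanishing for large $r$, and set $B^nf(x):=\chi_n(\tilde\rho(x))\,Bf(x)$, i.e. $\mu_n(x,dy)=\chi_n(\tilde\rho(x))\mu(x,dy)$. Bounds \eqref{eq:cond1}--\eqref{eq:cond3} yield $\|B^n\|_{L(\mathcal{B}^{\tilde\rho}(X))}\le C_n$ and also boundedness on $\mathcal{B}^{\sqrt{\tilde\rho}}(X)$. Thus $A+B^n$ generates a strongly continuous semigroup $P^n$ on $\mathcal{B}^{\tilde\rho}(X)$ by Phillips' bounded perturbation theorem; positivity of $P^n$ follows by writing $A+B^n=(A-V_nI)+K_n$ with $V_n(x):=\chi_n(\tilde\rho(x))\int\mu(x,dy)\ge 0$ bounded and $K_nf(x):=\chi_n(\tilde\rho(x))\int f(y)\mu(x,dy)$ positive, and expanding via the Dyson--Phillips series. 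Exactly the same maximum-principle computation as above, combined with $\chi_n\le 1$, shows that $A+B^n$ satisfies the generalized positive maximum principle with constant $\omega+\tilde\omega^+$ (writing $\tilde\omega^+:=\max(\tilde\omega,0)$), yielding by Theorem~\ref{theorem:Ttposmaxprinciple} the \emph{uniform} bound $\|P^n_t\|_{L(\mathcal{B}^{\tilde\rho}(X))}\le\exp((\omega+\tilde\omega^+)t)$. The generalized Feller property of $P^n$ is clear from the construction.

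\medskip

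\noindent Finally I would apply Theorem~\ref{thm:approximation_gen} with the subset $D$ above, endowed with the graph norm $\|f\|_D:=\|f\|_{\tilde\rho}+\|Af\|_{\tilde\rho}+\|f\|_{\sqrt{\tilde\rho}}$. For $f\in D$,
\begin{equation*}
(A+B^n)P^m_uf-(A+B^m)P^m_uf\;=\;\bigl(\chi_n(\tilde\rho(\cdot))-\chi_m(\tilde\rho(\cdot))\bigr)\,BP^m_uf.
\end{equation*}
Since $B\colon\mathcal{B}^{\sqrt{\tilde\rho}}\to\mathcal{B}^{\tilde\rho}$ is bounded and $P^m$ leaves $\mathcal{B}^{\sqrt{\tilde\rho}}$ invariant with uniform-in-$m$ growth bound (again obtained from a positive-maximum argument on this subspace, which is where one uses the assumption that $\mathcal{B}^{\sqrt{\tilde\rho}}\subset\mathcal{B}^{\tilde\rho}$ is $P$-invariant), $BP^m_uf$ lies in $\mathcal{B}^{\tilde\rho}$ with norm bounded uniformly in $u\in[0,t]$ and $m$. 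The decay \eqref{eq:Bdecay} for $BP^m_uf$ combined with the pointwise convergence $\chi_n,\chi_m\to 1$ then gives the required $a^{f,t}_{nm}\to 0$. Theorem~\ref{thm:approximation_gen} delivers a strongly continuous generalized Feller semigroup $P^\infty$ on $\mathcal{B}^{\tilde\rho}(X)=\mathcal{B}^\rho(X)$ with the announced growth bound, whose generator extends $A+B$ on $D$; density of $D$ together with closedness identifies the generator as the closure of $A+B$.

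\medskip

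\noindent\textbf{Main obstacle.} The technically delicate point is the uniform-in-$m$ control of $P^m_uf$ in the smaller space $\mathcal{B}^{\sqrt{\tilde\rho}}$ needed in the last step: the assumptions \eqref{eq:cond1}--\eqref{eq:cond4} are formulated only with respect to $\rho$, so an analogous positive-maximum estimate on $\mathcal{B}^{\sqrt{\tilde\rho}}$ must be extracted from the invariance assumption and the boundedness of $B^m$ on that subspace, together with the transport structure of $P$. Everything else is a fairly mechanical combination of the positive maximum principle with the approximation theorem.
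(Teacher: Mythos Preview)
The paper itself does not give a proof of this proposition: it refers the reader to \cite{cuctei:18}. So there is no in-paper argument to compare against, and your outline is essentially an attempt to reconstruct the proof from the tools made available here (the renorming of Remark~\ref{rem:quasicontractive}, the positive maximum principle Theorem~\ref{theorem:Ttposmaxprinciple}, and the approximation Theorems~\ref{thm:approximation}--\ref{thm:approximation_gen}). The overall architecture you propose---pass to $\tilde\rho$, verify the positive maximum principle for $A+B$ with constant $\omega+\tilde\omega$, truncate to bounded $B^n$, and take a limit---is the natural one and matches what one expects the cited argument to do.

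Your first two steps are fine. The renorming is exactly what \eqref{eq:cond4} is designed for, and the positive-maximum computation $Bf(z)\le \tilde\omega f(z)$ at a $\tilde\rho$-maximum is correct (note that \eqref{eq:cond4} involves an absolute value, so $\tilde\omega\ge 0$ automatically and there is no need for $\tilde\omega^+$). Likewise the truncation $B^n=\chi_n(\tilde\rho)B$ is bounded on $\mathcal{B}^{\tilde\rho}$ by \eqref{eq:cond1} and \eqref{eq:cond3}, positivity of $P^n$ follows from the Dyson--Phillips decomposition you indicate, and the \emph{uniform} bound $\|P^n_t\|\le e^{(\omega+\tilde\omega)t}$ comes from the positive maximum principle on $\mathcal{B}^{\tilde\rho}$ exactly as you say.

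The genuine gap is precisely the point you flag as ``main obstacle'', and it is not a minor technicality. To make the final approximation step go through you need a uniform-in-$m$ growth bound for $P^m$ on the smaller space $\mathcal{B}^{\sqrt{\tilde\rho}}$. You suggest obtaining this from a positive maximum principle on $\mathcal{B}^{\sqrt{\tilde\rho}}$, but the hypotheses \eqref{eq:cond2}--\eqref{eq:cond3} only give
\[
\int\bigl(\sqrt{\tilde\rho(y)}-\sqrt{\tilde\rho(z)}\bigr)\,\mu(z,dy)\ \lesssim\ \rho(z)\ \sim\ \tilde\rho(z),
\]
i.e.\ $B\sqrt{\tilde\rho}\lesssim \tilde\rho$, \emph{not} $B\sqrt{\tilde\rho}\lesssim \sqrt{\tilde\rho}$. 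Hence the positive maximum computation on $\mathcal{B}^{\sqrt{\tilde\rho}}$ yields $(A+B^m)f(z)\le (\omega+C\sqrt{\tilde\rho(z)})f(z)$, and the cutoff $\chi_m$ only turns this into an $m$-dependent bound, not a uniform one. Without that uniform control you cannot extract the $a^{f,t}_{nm}\to 0$ required by Theorem~\ref{thm:approximation_gen}, because the decay of $BP^m_uf$ in $\mathcal{B}^{\tilde\rho}$ is not uniform over $m$ and $u\in[0,t]$ from norm-boundedness alone. In short: the conditions \eqref{eq:cond1}--\eqref{eq:cond4} are calibrated so that $B$ loses exactly one power of $\sqrt\rho$, and this is just enough for the maximum principle on $\mathcal{B}^{\tilde\rho}$ but not on $\mathcal{B}^{\sqrt{\tilde\rho}}$. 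Closing this gap requires an additional ingredient (for instance, a direct dense-range argument for $A+B-\omega_0$ to feed into Theorem~\ref{theorem:Ttposmaxprinciple}, or a different invariance argument on $\mathcal{B}^{\sqrt\rho}$ exploiting more of the transport structure) that your sketch does not supply; this is presumably handled explicitly in \cite{cuctei:18}.
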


\begin{proof} See \cite{cuctei:18}.
\end{proof}

\begin{remark}
  In contrast to classical Feller theory also processes with unbounded
  jump intensities can be constructed easily if $ \rho $ is unbounded
  on $ X $. The general character of the proposition allows to build
  general processes from simple ones by perturbation.
\end{remark}

\section{Lifting Stochastic Volterra jump processes with values in $\mathbb{S}^d_+ $} \label{sec:markovianlift_abstract}

Building on the theory of generalized Feller proceses from above, we
shall now treat the following type of matrix-measure valued SPDEs
\begin{equation}
  \begin{split} \label{eq:SPDE_weak}
    d \lambda_t(dx) &= \mathcal{A}^* \lambda_t(dx) dt + \nu(dx) dX_t + dX_t \nu(dx), \\
    \lambda_0 & \in \mathcal{E} .
  \end{split}
\end{equation}
As shown below this equation corresponds to a Markovian lift of the
Volterra jump process in \eqref{eq:Volterrajump}.

We consider here the setting of Section \ref{subsec:dual}.  The
underlying Banach space $Y^*$ is here the space of finite
$\mathbb{S}^d$-valued regular Borel measures on the extended half real
line $\overline{\mathbb{R}}_+:=\mathbb{R}_+ \cup \{\infty\}$ and
$\mathcal{E}$ denotes a (positive definite) subset of $Y^*$.
Moreover, $\mathcal{A}^*$ is the generator of a strongly continuous
semigroup $\mathcal{S}^*$ on $Y^*$, $\nu \in Y^*$ (or in a slightly
larger space denoted by $Z^*$ in the sequel).  The predual space $Y$
is given by $C_{b}(\overline{\mathbb{R}}_+, \mathbb{S}^d)$
functions. Note that since $\overline{\mathbb{R}}_+$ is compact,
$Y=C_{b}(\overline{\mathbb{R}}_+, \mathbb{S}^d)$ is separable.  The
driving process $X$ is an $\mathbb{S}^d$-valued pure jump
It\^o-semimartingale, whose differential characteristics depend
linearly on $\lambda$, precisely specified below.  Let us remark that
other forms of differential characteristics of $X$, in particular
beyond the linear case, can be easily incorporated in this setting.

The pairing between $Y$ and $Y^*$, denoted by
$\langle \cdot, \cdot \rangle $, is specified via:
\[
  \langle \cdot, \cdot \rangle: Y \times Y^* \to \mathbb{R}, \quad (y,
  \lambda) \mapsto \langle y, \lambda\rangle=\Tr\left(\int_0^{\infty}
    y(x) \lambda(dx) \right),
\]
where $\Tr $ denotes the trace. We also define another bilinear map
via
\begin{align}\label{eq:bilinearSd}
  \langle \langle \cdot, \cdot \rangle \rangle: Y \times Y^* \to \mathbb{S}^d, \quad
  (y, \lambda) \mapsto \langle \langle y, \lambda \rangle \rangle =\int_0^{\infty} y(x)\lambda(dx) + \int_0^{\infty} \lambda(dx) y(x).
\end{align}

In the following we summarize the main ingredients of our setting.
For the norm on $\mathbb{S}^d$ we write $\| \cdot \|$, which is given
by $\| u \| = \sqrt{\Tr(u^2)}$ for $u \in \mathbb{S}^d$.

\begin{assumption}\label{ass:weak_existence}
  Throughout this section we shall work under the following
  conditions:
  \begin{enumerate}
  \item We are given an admissible weight function $ \rho$ on $ Y^* $
    (in the sense of Section~\ref{sec:genFeller}) such that
    \[
      \rho(\lambda) = 1+ {\|\lambda\|}_{Y^*}^2, \quad \lambda \in Y^*,
    \]
    where $\|\cdot\|_{Y^*}$ denotes the norm on $Y^*$, which is the
    total variation norm of $ \lambda$.
  \item We are given a closed convex cone $ \mathcal{E} \subset Y^* $
    (in the sequel the cone of $\mathbb{S}^d_+$ valued measures) such
    that $ (\mathcal{E},\rho) $ is a weighted space in the sense of
    Section \ref{sec:genFeller}.  This will serve as statespace of
    \eqref{eq:SPDE_weak}.
  \item Let $ Z \subset Y $ be a continuously embedded subspace.
  \item We assume that a semigroup $ \mathcal{S}^* $ with generator
    $ \mathcal{A}^* $ acts in a strongly continuous way on $ Y^* $ and
    $ Z^* $, with respect to the respective norm
    topologies. 
    Moreover, we suppose that for any matrix $A \in \mathbb{S}^d$ it
    holds that
    \begin{align}\label{eq:semigroupprop}
      \mathcal{S}^*_t(\lambda(\cdot) A+ A \lambda(\cdot))= (\mathcal{S}^*_t\lambda(\cdot)) A+ A (\mathcal{S}^*_t \lambda(\cdot)).
    \end{align}
  \item We assume that $ \lambda \mapsto \mathcal{S}^*_t\lambda $ is
    weak-$*$-continuous on $ Y^* $ and on $ Z^* $ for every
    $ t \geq 0 $ (considering the weak-$*$-topology on both the domain
    and the image space).
  \item We suppose that the (pre-) adjoint operator of
    $ \mathcal{A}^* $, denoted by $\mathcal{A}$ and domain
    $ \operatorname{dom}(\mathcal{A}) \subset Z \subset Y $, generates
    a strongly continuous semigroup on $Z$ with respect to the
    respective norm topology (but \emph{not} necessarily on $ Y $).
  \end{enumerate}
\end{assumption}

To analyze solvability of \eqref{eq:SPDE_weak} we first consider the
following linear deterministic equation
\begin{align}\label{eq:lambda_abstract}
  d \lambda_t(dx) = \mathcal{A}^* \lambda_t(dx) dt + \nu(dx) \beta (\lambda_t(\cdot) )dt+ \beta (\lambda_t(\cdot)) \nu(dx)dt
\end{align}
for $ \lambda_0 \in Y^* $, $ \nu \in Z^*$ and $\beta$ a bounded linear
operator from $Y^* \to \mathbb{S}^d$ which satisfies for
$ A \in \mathbb{S}^d$ and $\lambda \in Y^*$
\begin{align}\label{eq:op}
  \beta(\lambda(\cdot) A +A\lambda(\cdot))= \beta(\lambda(\cdot)) A + A \beta(\lambda(\cdot)).
\end{align}

We denote by $\beta_*: \mathbb{S}^d \to Y$ the adjoint operator
defined via
\[
  \Tr( u \beta(\lambda))= \Tr(\int_0^{\infty} \beta_*(u)(x)
  \lambda(dx))=\langle \beta_*(u), \lambda \rangle, \quad u \in
  \mathbb{S}^d, \, \lambda \in Y^*.
\]

\begin{remark}
  Notice that drift specifications could be more general here, but for
  the sake or readability we leave this direction for the interested
  reader.
\end{remark}

For notational convenience we shall often leave the $dx$ argument away
when writing an (S)PDE of type \eqref{eq:lambda_abstract}
subsequently. Under the following assumptions on $ \mathcal{S}^* $ and
$ \nu \in Z^* $ we can guarantee that \eqref{eq:lambda_abstract} can
be solved on the space $Y^*$ for all times in the mild sense with
respect to the dual norm $\|\cdot\|_{Y^*}$ by a standard Picard
iteration method.

\begin{assumption}\label{ass:semigroup}
  We assume that
  \begin{enumerate}
  \item $ \mathcal{S}^*_t \nu \in Y^* $ for all $ t > 0 $ even though
    $ \nu $ does not necessarily lie in $ Y^* $ itself, but only in
    $ Z^* $;
  \item $ \int_0^t \| \mathcal{S}^*_s \nu \|^2_{Y^*} ds < \infty $ for
    all $ t > 0 $.
  \end{enumerate}
\end{assumption}

For the linear operator $\beta$ as of \eqref{eq:op}, we define
\begin{align} \label{eq:kernel} K(t) := \beta(S_t^{*} \nu),
\end{align}
which will correspond to a kernel in
$L^2_{\text{loc}}(\mathbb{R}_+, \mathbb{S}^d)$ of a Volterra equation.
Define furthermore
$R_K \in L^2_{\text{loc}}(\mathbb{R}_+, \mathbb{S}^d)$ as a
symmetrized version of the resolvent of the second kind (see
e.g.~\cite[Theorem 3.1]{gri:90}) that solves
\begin{align}\label{eq:resolvent}
  K*R_K+R_K * K=K-R_K,
\end{align}
where $K*R_K$ denotes the convolution, i.e.
$K*R_K= \int_0^{\cdot} K(\cdot-s)R_K(s) ds$.

\begin{example}\label{ex:canonical}
  The main example that we have in mind for $\beta$ and for
  $\mathcal{S}^*$, and thus in turn for the kernel $K$, are the
  following specifications:
  \[
    \beta(\lambda) =\int_0^{\infty} \lambda(dx), \quad \mathcal{S}^*_t
    \nu(dx) = e^{-xt}\nu(dx).
  \]
  In this case $K= \int_0^{\infty} e^{-xt}\nu(dx)$ and the adjoint
  operator $\beta_*$ is given by the constant function
  \[
    (\beta_*(u))(x)= u, \quad \text{for all } x \in \mathbb{R}_+.
  \]
\end{example}

\begin{remark}\label{rem:frac} To the semigroup $\mathcal{S}^*_t= e^{-xt}$ of the
  above example, we associate our (main) specification of the space
  $ Z $: 
  let $Z \subset Y$ such that for all $y \in Y$ the map
  \[
    h_y : \overline{ \mathbb{R}}_+ \to \mathbb{S}^d, \quad x \mapsto x
    y(x)
  \]
  lies in $Z$ equipped with the operatornorm, i.e.
$$ 
\| h_y \|_Z =\sqrt{\sup_{ x \geq 0} \| y(x) \| + \sup_{x \geq 0}\| x
  y(x) \| } \text { for } h_y \in Z \, .
$$ 
The corresponding dual space $Z^* \supset Y^*$ is the space of regular
$\mathbb{S}^d$-valued Borel measures $\nu$ on
$\overline{\mathbb{R}}_+$ that satisfy
\[
  \|\int_0^{\infty} (\frac{1}{x} \wedge 1) \nu(dx)\| <\infty \, .
\]
Note that we can specify the components of $\nu$ to be measures of the
form
\[
  \nu_{ij}(dx) = x^{-\frac{1}{2}-H_{ij}}, \quad H_{ij} \in \left(0,
    \frac{1}{2}\right),
\]
which gives rise to fractional kernels
$K_{ij}(t)=\int_0^{\infty} e^{-xt} \nu_{ij}(dx) \approx
t^{H_{ij}-\frac{1}{2}}$. These are in turn main ingredients of rough covariance modeling. 
\end{remark}

\begin{remark}
  In this article we choose to work with state spaces of matrix valued
  measures using the representation of the kernel $K$ as Laplace
  transform of a matrix valued measure $\nu$ as specified in Example
  \ref{ex:canonical}.  We could however perform the same analysis on a
  Hilbert space of forward covariance curves. This corresponds then to
  a multivariate analogon of \cite[Section 5.2]{cuctei:18}.
\end{remark}

\begin{proposition}\label{prop:existenceY*} 
  Under Assumption \ref{ass:semigroup}, there exists a unique mild
  solution of \eqref{eq:lambda_abstract} with values in
  $Y^*$. Additionally, the solution operator is a weak-$*$-continuous
  map $ \lambda_0 \mapsto \lambda_t $, for each $ t > 0 $, and the
  solution satisfies
  \[
    \rho(\lambda_t) \leq C \rho(\lambda_0), \quad \text{for all }
    \lambda_0 \in Y^* \text{ and } t \in [0, \varepsilon]
  \]
  for some positive constants $ C $ and $ \varepsilon $.
\end{proposition}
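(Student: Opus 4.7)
The plan is to reduce the infinite-dimensional deterministic equation \eqref{eq:lambda_abstract} to a finite-dimensional Volterra equation in $\mathbb{S}^d$ by applying the bounded linear operator $\beta$. Writing the mild formulation
\[
\lambda_t = \mathcal{S}^*_t\lambda_0 + \int_0^t \bigl(\mathcal{S}^*_{t-s}\nu\bigr)\beta(\lambda_s)\,ds + \int_0^t \beta(\lambda_s)\bigl(\mathcal{S}^*_{t-s}\nu\bigr)\,ds,
\]
Assumption~\ref{ass:semigroup} ensures the two integrals make sense in $Y^*$ whenever $s\mapsto\beta(\lambda_s)$ is locally bounded in $\mathbb{S}^d$, since $s \mapsto \mathcal{S}^*_{t-s}\nu$ lies in $L^2([0,t];Y^*)$ by (ii). Setting $\alpha_t := \beta(\lambda_t)$, using the property \eqref{eq:op}, the identification $K(t-s) = \beta(\mathcal{S}^*_{t-s}\nu)$ from \eqref{eq:kernel}, and boundedness of $\beta$ (which permits the interchange of $\beta$ with Bochner integration), I obtain
\[
\alpha_t = f(t) + \int_0^t K(t-s)\,\alpha_s\,ds + \int_0^t \alpha_s\,K(t-s)\,ds, \qquad f(t) := \beta(\mathcal{S}^*_t\lambda_0),
\]
where $f$ is continuous and locally bounded by strong continuity of $\mathcal{S}^*$ on $Y^*$, and $K\in L^2_{\mathrm{loc}}(\mathbb{R}_+;\mathbb{S}^d)$ by Assumption~\ref{ass:semigroup}(ii).

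The reduced equation is a linear, symmetric, $\mathbb{S}^d$-valued Volterra equation of the second kind with an $L^2_{\mathrm{loc}}$ kernel. Classical resolvent theory for such kernels (see \cite{gri:90}), whose symmetrized form gives precisely the resolvent $R_K$ of \eqref{eq:resolvent}, produces a unique locally bounded solution $\alpha$. With $\alpha$ in hand, I \emph{define} the candidate solution by substituting $\alpha_s$ for $\beta(\lambda_s)$ in the right-hand side of the mild equation; the resulting curve lies in $Y^*$ by the integrability just noted. Re-applying $\beta$ to this candidate and invoking uniqueness of $\alpha$ shows that $\beta(\lambda_t)=\alpha_t$, so the candidate is a genuine mild solution. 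Uniqueness in $Y^*$ follows because any other mild solution $\tilde\lambda$ satisfies $\beta(\tilde\lambda_t) = \alpha_t$ by uniqueness of the finite-dimensional equation, and the mild formula then determines $\tilde\lambda$ uniquely.

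For the growth bound, the mild formula yields
\[
\|\lambda_t\|_{Y^*} \leq M e^{\omega t}\|\lambda_0\|_{Y^*} + 2\|\beta\|_{L(Y^*,\mathbb{S}^d)} \int_0^t \|\mathcal{S}^*_{t-s}\nu\|_{Y^*}\,\|\lambda_s\|_{Y^*}\,ds;
\]
squaring, applying Cauchy--Schwarz using Assumption~\ref{ass:semigroup}(ii), and invoking Gronwall's lemma on a sufficiently small interval $[0,\varepsilon]$ produces $\|\lambda_t\|_{Y^*}^2 \leq C\|\lambda_0\|_{Y^*}^2$, whence $\rho(\lambda_t)\leq C\rho(\lambda_0)$. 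Weak-$*$-continuity of $\lambda_0\mapsto \lambda_t$ relies on the fact that $\beta$, being the adjoint of $\beta_*:\mathbb{S}^d\to Y$, is weak-$*$-continuous, combined with the weak-$*$-continuity of $\mathcal{S}^*$ from Assumption~\ref{ass:weak_existence}(v): the forcing $f(t)=\beta(\mathcal{S}^*_t\lambda_0)$ depends weak-$*$-continuously on $\lambda_0$, linearity of the resolvent formula propagates this to $\alpha$, and testing the mild formula for $\lambda_t$ against any $y\in Y$ transfers weak-$*$-continuity to $\lambda_t$ itself. The main obstacle is that $s\mapsto\|\mathcal{S}^*_s\nu\|_{Y^*}$ is only $L^2_{\mathrm{loc}}$ rather than $L^1_{\mathrm{loc}}$, which precludes a direct Picard iteration in $C([0,T];Y^*)$; the reduction to a finite-dimensional Volterra equation, where the $L^2$ resolvent theory of \cite{gri:90} applies, is precisely what resolves this obstruction.
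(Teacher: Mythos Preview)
Your argument is correct, but it takes a different route from the paper's own proof. The paper establishes existence directly by Picard iteration in $Y^*$: defining $\lambda^{n+1}_t = \mathcal{S}^*_t\lambda_0 + \int_0^t(\mathcal{S}^*_{t-s}\nu)\beta(\lambda^n_s)\,ds + \int_0^t\beta(\lambda^n_s)(\mathcal{S}^*_{t-s}\nu)\,ds$, one gets $\|\lambda^{n+1}_t-\lambda^n_t\|_{Y^*} \leq 2\|\beta\|_{\mathrm{op}}\int_0^t\|\mathcal{S}^*_{t-s}\nu\|_{Y^*}\|\lambda^n_s-\lambda^{n-1}_s\|_{Y^*}\,ds$, and an extended Gronwall inequality (Dalang, \cite[Lemma~15]{D:99}) yields convergence. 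So your stated ``main obstacle'' is not actually an obstacle: $L^2_{\mathrm{loc}}\subset L^1_{\mathrm{loc}}$ on bounded intervals, and direct Picard iteration in $C([0,T];Y^*)$ goes through without reduction. Your detour via the finite-dimensional Volterra equation for $\alpha_t=\beta(\lambda_t)$ is nonetheless a perfectly valid alternative, and has the pleasant feature of making uniqueness transparent.

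Interestingly, for the weak-$*$-continuity both approaches converge: the paper, having built the solution by Picard iteration, also passes to the finite-dimensional equation $\beta(\lambda_t)=\beta(\mathcal{S}^*_t\lambda_0)+\int_0^t(R_K(t-s)\beta(\mathcal{S}^*_s\lambda_0)+\beta(\mathcal{S}^*_s\lambda_0)R_K(t-s))\,ds$ and reads off weak-$*$-continuity from that of $\lambda_0\mapsto(t\mapsto\beta(\mathcal{S}^*_t\lambda_0))$, exactly as you do. For the growth bound the paper uses a resolvent-of-the-kernel argument rather than your Cauchy--Schwarz plus Gronwall on a small interval, but both are standard and lead to the same conclusion.
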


\begin{remark}
  The unique mild solution of Equation \eqref{eq:lambda_abstract}
  satisfies by means of \eqref{eq:semigroupprop} the variation of
  constants equation
  \[
    \lambda_t = \mathcal{S}^*_t \lambda_0 + \int_0^t (
    \mathcal{S}^*_{t-s} \nu \beta(\lambda_s) + \beta(\lambda_s)
    \mathcal{S}^*_{t-s} \nu )ds,
  \]
  for all $ t \geq 0 $. Applying the linear operator $\beta$ and using
  property \eqref{eq:op}, we obtain a deterministic linear Volterra
  equation of the form
  \begin{equation}\label{eq:detVolt}
    \begin{split}
      \beta( \lambda_t ) &= \beta(\mathcal{S}_t^{*} \lambda_0 )+
      \int_0^t \beta\left( \mathcal{S}_{t-s}^{*} \nu \beta(\lambda_s)
        +\beta(\lambda_s)\mathcal{S}_{t-s}^{*}
        \nu  \right) ds\\
      &=\beta(\mathcal{S}_t^{*} \lambda_0 )+ \int_0^t \left(K(t-s)
        \beta(\lambda_s) +\beta(\lambda_s)K(t-s) \right) ds
    \end{split}
  \end{equation}
  where we have used \eqref{eq:kernel}.
\end{remark}

\begin{proof}
  We follow the arguments of \cite{cuctei:18} and translate the proof
  to the matrix-valued stetting.  We show first the completely
  standard convergence of the Picard iteration scheme with respect to
  the dual norm on $ Y^* $. Define
  \begin{align*}
    \lambda^0_t&= \lambda_0,\\
    \lambda^{n+1}_t&= \mathcal{S}^*_t \lambda_0 + \int_0^t (\mathcal{S}^*_{t-s} \nu )\beta (\lambda^n_s) ds+ \int_0^t\beta (\lambda^n_s)   (\mathcal{S}^*_{t-s} \nu )ds, \quad n \geq 0.
  \end{align*}
  Then, by Assumption \ref{ass:semigroup} (i) each $\lambda^n_t$ lies
  $Y^*$. Consider now
  \begin{align*}
    \|\lambda^{n+1}_t - \lambda^{n}_t\|_{Y^*} &=\| \int_0^t (\mathcal{S}^*_{t-s} \nu )(\beta (\lambda^{n}_s) - \beta (\lambda^{n-1}_s) )ds\\
                                              &\quad + \int_0^t(\beta (\lambda^{n}_s) -\beta (\lambda^{n-1}_s))  (\mathcal{S}^*_{t-s} \nu )ds\|_{Y^*}\\
                                              &\leq 2 \| \beta\|_{\text{op}}\int_0^t  \|\mathcal{S}^*_{t-s}  \nu\|_{Y^*}  \|\lambda^n_s -\lambda^{n-1}_s  \|_{Y^*}   ds,
  \end{align*}
  where $\| \beta\|_{\text{op}}$ denotes the operator norm of $\beta$.
  Assumption \ref{ass:semigroup} (ii) and an extended version of
  Gronwall's inequality see \cite[Lemma 15]{D:99} then yield
  convergence of $(\lambda^n_t)_{n \in \mathbb{N}}$ to some
  $\lambda_t$ with respect to the dual norm $\|\cdot\|_{Y^*}$
  uniformly in $t$ on compact intervals. For details on strongly
  continuous semigroups and mild solutions see \cite{paz:83}.

  Having established the existence of a mild solution of
  \eqref{eq:lambda_abstract} in $Y^*$, consider now the
  $\mathbb{S}^d$-valued process $\beta(\lambda_t)$:
  \begin{equation}
    \begin{split}\label{eq:Volterra_det}
      \beta(\lambda_t) &= \beta(\mathcal{S}_t^{*} \lambda_0 )+
      \int_0^t \beta\left( \mathcal{S}_{t-s}^* \nu \beta(\lambda_s) + \beta(\lambda_s) \mathcal{S}_{t-s}^* \nu  \right) ds ,\\
      &=\beta(\mathcal{S}_t^{*} \lambda_0)+ \int_0^t \left( \beta(\mathcal{ S}_{t-s}^* \nu) \beta(\lambda_s)  + \beta(\lambda_s) \beta(\mathcal{S}_{t-s}^* \nu) \right) ds\\
      &=\beta(\mathcal{S}_t^{*} \lambda_0)+ \int_0^t \left( R_K(t-s)
        \beta(\mathcal{S}_s^{*} \lambda_0) + \beta(\mathcal{S}_s^{*}
        \lambda_0) R_K(t-s)\right) ds
    \end{split}
  \end{equation}  
  where we applied property \eqref{eq:op}.  Remember that $R_K$
  denotes the resolvent of the second kind of
  $ K(t)=\beta(\mathcal{S}_{t}^{*} \nu )$ as introduced in
  \eqref{eq:resolvent} by means of which we can solve the above
  equation in terms of integrals of
  $ t \mapsto \beta(\mathcal{S}_t^* \lambda_0 ) $. Since by assumption
  $ \mathcal{S}^* $ is a weak-$*$-continuous solution operator, the
  map
  $ \lambda_0 \mapsto (t \mapsto \beta(\mathcal{S}^*_t \lambda_0 )) $
  is weak-$*$-continuous as a map from $ Y^* $ to
  $ C(\mathbb{R}_{+},\mathbb{S}^d) $ (with the topology of uniform
  convergence on compacts on $C(\mathbb{R}_{+},\mathbb{S}^d)$). From
  \eqref{eq:Volterra_det} we thus infer that $ \beta(\lambda_t ) $ is
  weak-$*$-continuous for every $ t \geq 0 $, which clearly translates
  to the solution map of Equation \eqref{eq:lambda_abstract}.

  Finally we have to show that the stated inequality for
  $ \rho(\lambda_t) $ holds true on small time intervals
  $ [0,\varepsilon]$. Observe first that for $t \in [0,\varepsilon]$
  \[
    \|\mathcal{S}^*_t \lambda\|_{Y^*}^2 \leq C \|\lambda\|^2_{Y^*}
  \]
  for all $\lambda \in Y^*$ just by the assumption that
  $\mathcal{S}^*_t$ is strongly continuous, for some constant
  $ C \geq 1 $. Furthermore for $t \in [0, \varepsilon]$
  \begin{align*}
    \|\lambda_t\|_{Y^*}^2 &\leq 3( C \|\lambda_0\|^2_{Y^*} +  t\int_0^t \| \mathcal{S}_{t-s}^* \nu \beta(\lambda_s)\|_{Y^*}^2 + t\int_0^t \| \beta(\lambda_s) \mathcal{S}_{t-s}^* \nu \|_{Y^*}^2)\\
                          &\leq 3( C \|\lambda_0\|^2_{Y^*} + 2\varepsilon \|\beta\|^2_{\text{op}} \int_0^t  \|\mathcal{S}^*_{t-s} \nu\|^2_{Y^*} \|\lambda_s \|_{Y^*}^2  ds ).
  \end{align*}
  Consider now the kernel
  $K'(t,s)=6\varepsilon \|\beta\|^2_{\text{op}} \|\mathcal{S}^*_{t-s}
  \nu\|^2_{Y^*} 1_{\{s \leq t\}}$ and denote by $R'$ the resolvent of
  $-K'$, which is nonpositive.  By exactly the same arguments as in
  \cite{cuctei:18}, we then have for $t \in [0,\varepsilon]$
  \[
    \|\lambda_t\|_{Y^*}^2 \leq\widetilde{C} \|\lambda_0\|^2_{Y^*} (1 -
    \int_0^{\varepsilon} R'(s) ds),
  \]
  for some constant $\widetilde{C}$.  This leads to the desired
  assertion due to the definition of $\rho$. From this inequality also
  uniqueness follows in a standard way.
\end{proof}

As our goal is to consider $\mathbb{S}^d_+$-measure valued processes,
we denote by $\mathcal{E}$ the following weak-$*$-closed convex cone
\[
  \mathcal{E}=\{\lambda_0 \in Y^* \, |\, \lambda_0 \text{ is an }
  \mathbb{S}^d_+ \text{ -valued measure on }
  \overline{\mathbb{R}}_+\}.
\]

The next proposition establishes that the solution of
\eqref{eq:lambda_abstract} leaves $\mathcal{E}$ invariant, if the
following assumption holds true:

\begin{assumption}\label{ass:para}
  We assume that
  \begin{enumerate}
  \item $\mathcal{S}^*_t (\mathcal{E}) \subseteq \mathcal{E}$;
  \item $\nu$ is an $\mathbb{S}^d_+$-valued measure;
  \item $\beta(\mathcal{E}) \subseteq \mathbb{S}^d_+$.
  \end{enumerate}
\end{assumption}

\begin{proposition}\label{prop:invariance}
  Let Assumptions \ref{ass:semigroup} and \ref{ass:para} be in
  force. Then the solution of \eqref{eq:lambda_abstract} leaves
  $ \mathcal{E} $ invariant and it defines a generalized Feller
  semigroup on $ (\mathcal{E},\rho) $ by
  $ P_t f(\lambda_0) := f(\lambda_t) $ for all
  $ f \in \mathcal{B}^\rho(\mathcal{E}) $ and $ t \geq 0 $.
\end{proposition}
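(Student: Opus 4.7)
The plan is to combine the existence result of Proposition \ref{prop:existenceY*} with the generalized Feller machinery of Section \ref{sec:genFeller}: equation \eqref{eq:lambda_abstract} already admits a unique $Y^*$-valued mild solution with weak-$*$-continuous flow and the growth bound $\rho(\lambda_t) \leq C \rho(\lambda_0)$ on a small interval. What remains is (a) to show that the flow leaves $\mathcal{E}$ invariant and (b) to verify that $P_t f(\lambda_0) := f(\lambda_t)$ satisfies Definition \ref{def:genFeller} on $\mathcal{B}^\rho(\mathcal{E})$.

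For invariance I would argue along the Picard iteration already used in the proof of Proposition \ref{prop:existenceY*}. Set $\lambda^0_t := \mathcal{S}^*_t \lambda_0$, which lies in $\mathcal{E}$ by Assumption \ref{ass:para}(i). Assuming inductively that $\lambda^n_s \in \mathcal{E}$ for $s \in [0,t]$, Assumption \ref{ass:para}(iii) gives $\beta(\lambda^n_s) \in \mathbb{S}^d_+$ and Assumption \ref{ass:para}(i)--(ii) ensures that $\mathcal{S}^*_{t-s}\nu$ is $\mathbb{S}^d_+$-valued. One then has to show that $\mathcal{S}^*_t \lambda_0 + \int_0^t (\mathcal{S}^*_{t-s}\nu \, \beta(\lambda^n_s) + \beta(\lambda^n_s) \, \mathcal{S}^*_{t-s}\nu)\, ds$ remains in $\mathcal{E}$, and then pass to the limit $n \to \infty$ in the dual norm, using that $\mathcal{E}$ is weak-$*$-closed and a fortiori norm-closed. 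This is the main obstacle: for $A, B \in \mathbb{S}^d_+$ the pointwise symmetrization $AB+BA$ is not in general positive semidefinite, so the integrand need not be $\mathbb{S}^d_+$-valued as a measure. I would address this either by testing against the dual cone of $\mathcal{E}$ and exploiting the trace identity $\Tr(y\,\nu B + y B \nu) = \Tr((y\nu + \nu y)B)$ together with the semigroup compatibility \eqref{eq:semigroupprop}, or by using the closed-form representation \eqref{eq:Volterra_det} to reduce invariance to positivity of an $\mathbb{S}^d$-valued linear Volterra equation driven by $\beta(\mathcal{S}^*_t \lambda_0) \in \mathbb{S}^d_+$ with resolvent $R_K$ from \eqref{eq:resolvent}.

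Once invariance is secured, the generalized Feller axioms are essentially formal. \textbf{(F1)} $P_0 = I$ is immediate; \textbf{(F2)} the semigroup law follows from uniqueness of mild solutions in Proposition \ref{prop:existenceY*}; \textbf{(F5)} positivity is tautological, since $P_t$ is a pullback by a point map. \textbf{(F4)} $\|P_t\|_{L(\mathcal{B}^\rho(\mathcal{E}))} \leq C$ on $[0,\varepsilon]$ is precisely the $\rho$-bound from Proposition \ref{prop:existenceY*}. For pointwise continuity \textbf{(F3)}, fix $f \in \mathcal{B}^\rho(\mathcal{E})$: the mild solution $t \mapsto \lambda_t$ is norm- and hence weak-$*$-continuous, while Theorem \ref{theorem:boundedweakcontapprox} asserts that $f$ is weak-$*$-continuous on each compact sublevel set $K_R$ of $\rho$, so $f(\lambda_t) \to f(\lambda_0)$ as $t \to 0$. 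Strong continuity then follows from Theorem \ref{theorem:Ttstrongcont}. Equivalently, one verifies Assumption \ref{ass:generic} (item (i) from the $\rho$-bound, item (ii) as above, and item (iii) from the weak-$*$-continuity of $\lambda_0 \mapsto \lambda_t$ together with Theorem \ref{theorem:boundedweakcontapprox}) and invokes Theorem \ref{theorem:strongcontprocess} to conclude that $(P_t)_{t \geq 0}$ is a generalized Feller semigroup on $(\mathcal{E},\rho)$.
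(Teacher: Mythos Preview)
Your overall plan for the Feller part (the second half) is fine and matches the paper: once invariance is established, the paper also verifies Assumption~\ref{ass:generic} via the weak-$*$-continuity of the solution map and the $\rho$-bound from Proposition~\ref{prop:existenceY*}, and then invokes Theorem~\ref{theorem:strongcontprocess}.

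The invariance argument, however, is where your proposal and the paper diverge, and where your sketch has a genuine gap. You correctly flag the obstacle: for $A,B\in\mathbb{S}^d_+$ the symmetrized product $AB+BA$ need not be positive semidefinite, so the Picard iterates are not obviously $\mathcal{E}$-valued. Neither of your proposed fixes closes this. Testing against $y\in\mathcal{E}_*$ and using $\Tr(y\,\nu B + yB\,\nu)=\Tr((y\nu+\nu y)B)$ just shifts the problem: for $y(x)\in\mathbb{S}^d_-$ and $\nu(dx)\in\mathbb{S}^d_+$ the combination $y\nu+\nu y$ is again not in $\mathbb{S}^d_-$ in general, so the integrand has no sign. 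The resolvent formula \eqref{eq:Volterra_det} only controls the projection $\beta(\lambda_t)\in\mathbb{S}^d$, not the full measure $\lambda_t\in Y^*$, so even a positivity result for the scalar Volterra equation would not give $\lambda_t\in\mathcal{E}$.

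The paper takes a different route for invariance. It first regularizes $\nu\mapsto\mathcal{S}^*_\varepsilon\nu\in Y^*$, so that the drift perturbation $B\lambda:=\mathcal{S}^*_\varepsilon\nu\,\beta(\lambda)+\beta(\lambda)\,\mathcal{S}^*_\varepsilon\nu$ becomes a \emph{bounded} operator. It then argues that the bounded semigroup $e^{Bt}$ preserves $\mathcal{E}$, applies Trotter--Kato to conclude that the semigroup generated by $\mathcal{A}^*+B$ preserves $\mathcal{E}$, and finally lets $\varepsilon\to0$ using the approximation Theorem~\ref{thm:approximation}. So instead of tracking Picard iterates, the paper reduces invariance to invariance under two simpler flows and an approximation limit. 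It is worth noting that the paper's intermediate claim $B(\mathcal{E})\subseteq\mathcal{E}$ runs into exactly the $AB+BA$ issue you identified; what is really needed there is only that the flow $e^{Bt}$ preserves $\mathcal{E}$, which is a weaker (tangent-cone type) condition, but the paper does not spell out that step either.
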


\begin{proof}
  Consider first the slightly modified equation
  \begin{align}\label{eq:modlambda}
    d \lambda_t(dx) = \mathcal{A}^* \lambda_t(dx) dt +\mathcal{S}_{\varepsilon}^* \nu(dx) \beta (\lambda_t(\cdot) )dt+ \beta (\lambda_t(\cdot)) \mathcal{S}_{\varepsilon}^*\nu(dx)dt
  \end{align}
  for some $\varepsilon >0$.  Then the operator
  $B=\mathcal{S}_{\varepsilon}^* \nu(dx) \beta (\cdot) + \beta (\cdot)
  \mathcal{S}_{\varepsilon}^*\nu(dx)$ is bounded and the associated
  semigroup is given by $P_t^{\varepsilon}=e^{Bt}$. Due to the
  assumptions on $\mathcal{S}^*$, $\nu$ and $\beta$, we have
  $B(\mathcal{E}) \subseteq \mathcal{E}$ implying that
  $P^{\varepsilon}_t(\mathcal{E}) \subseteq \mathcal{E}$ for all
  $t \geq 0$. The Trotter-Kato Theorem (see, e.g., \cite[Theorem
  III.5.8]{engnag:00}) then yields that the semigroup associated to
  \eqref{eq:modlambda} maps $\mathcal{E}$ to itself. This then also
  holds true for the limit when $\varepsilon =0$ by Theorem
  \ref{thm:approximation}.

  Since by Proposition \ref{prop:existenceY*} the solution operator is
  weak-$*$-continuous, we can conclude that
  $\lambda_0 \mapsto f(\lambda_t)$ lies in
  $ \mathcal{B}^\rho(\mathcal{E}) $ for a dense set of
  $ \mathcal{B}^\rho(\mathcal{E}) $ by Theorem
  \ref{theorem:boundedweakcontapprox}. Moreover, it satisfies the
  necessary bound \eqref{eq:markovpsibound} for $ \rho $ and
  \eqref{eq:contint} is satisfied by (norm)-continuity of
  $t \mapsto \lambda_t$. Hence all the conditions of Assumption
  \ref{ass:generic} are satisfied and the solution operator therefore
  defines a generalized Feller semigroup $ (P_t) $ on
  $ \mathcal{B}^\rho(\mathcal{E}) $ by Theorem
  \ref{theorem:strongcontprocess}. This generalized Feller semigroup
  of course coincides with the previously constructed limit.
\end{proof}

By the previous results we can now construct a generalized Feller
process on $ \mathcal{E} $ which jumps up by multiples of
$ \mathcal{S}^*_{\varepsilon}\nu $ for some $\varepsilon \geq 0$ and
with an instantaneous intensity of size $ \beta(\lambda_t) $.  Recall
that $\mathcal{E}_* \subset Y$ denotes the (pre-)polar cone of
$ \mathcal{E}$, that is
\[
  \mathcal{E}_*=\{y \in Y \, |\, y \in C_b(\overline{\mathbb{R}}_+,
  \mathbb{S}_-^d)\}.
\]
Recall the notation from  \eqref{eq:bilinearSd} and define the following set
\begin{align}\label{eq:mathcalD}
  \mathcal{D}=\{ y \in  Y \, |\, y \in \operatorname{dom}(\mathcal{A})~\text{ s.t. } \langle \langle y, \nu \rangle \rangle \text{ is well-defined}\}.
\end{align}

\begin{proposition}\label{prop:epsjumps}
  Let Assumptions \ref{ass:semigroup} and \ref{ass:para} be in
  force. Moreover, let $\mu$ be a finite $\mathbb{S}^d_+$-valued
  measure on $\mathbb{S}^d_+$ such that
  $\int_{\|\xi\| \geq 1}\|\xi\|^2 \|\mu(d\xi)\|< \infty$.  Consider
  the SPDE
  \begin{align}\label{eq:SPDEbase}
    d \lambda_t & = \mathcal{A}^* \lambda_t  dt +\nu \beta(\lambda_t) dt + \beta(\lambda_t) \nu dt + \mathcal{S}^*_\varepsilon \nu dN_t + dN_t \mathcal{S}^*_\varepsilon \nu ,
  \end{align}
  where $(N_t)_{t \geq 0}$ is a pure jump process with jump sizes in
  $\mathbb{S}^d_+$ and compensator
  \[
    \int_0^{\cdot} \int_{\mathbb{S}_+^d} \xi \Tr\left(\beta(\lambda_s)
      \mu(d\xi) \right) ds.
  \]
  \begin{enumerate}
  \item Then for every $ \lambda_0 \in \mathcal{E} $ and
    $ \varepsilon > 0 $ , the SPDE \eqref{eq:SPDEbase} has a solution
    in $ \mathcal{E} $ given by a generalized Feller process
    associated to the generator of \eqref{eq:SPDEbase}.
  \item This generalized Feller process is \emph{also} a
    probabilistically weak and analytically mild solution
    of \eqref{eq:SPDEbase}, i.e.
    \begin{align*}
      \lambda_t & = \mathcal{S}^*_t \lambda_0 ds +\int_0^t
                  \mathcal{S}^*_{t-s}\nu \beta(\lambda_s) ds +
                  \int_0^t\beta(\lambda_s) \mathcal{S}_{t-s}^*\nu ds + \\
                & \quad +\int_0^t\mathcal{S}^*_{t-s+\varepsilon} \nu dN_s+ \int_0^t dN_s
                  \mathcal{S}^*_{t-s+\varepsilon} \nu \, ,
    \end{align*}
    which justifies Equation \eqref{eq:SPDEbase}. In particular for
    every initial value the process $ N $ can be constructed on an
    appropriate probabilistic basis. The stochastic integral is
    defined in a pathwise way along finite variation paths. Moreover, for every family $(f_n)_n \in \mathcal{B}^{\rho}(\mathcal{E})$, $t \mapsto f_n(\lambda_t)$ can be chosen to be c\`agl\`ad for all $n$.
  \item For every $ \varepsilon > 0 $, the corresponding Riccati
    equation $\partial_t y_t=R(y_t)$ with
    $R: \mathcal{D} \cap \mathcal{E}_* \to Y$ given by
    \begin{equation}\label{eq:Riccatisimpl}
      \begin{split}
        R(y) &= \mathcal{A} y + \beta_*\left(\int_0^{\infty} y(x)
          \nu(dx) + \nu(dx) y(x)\right)
        \\
        &\quad + \beta_*\left(\int_{\mathbb{S}^d_+} \left(\exp(
            \langle y , \mathcal{S}^*_{\varepsilon} \nu \xi +\xi
            \mathcal{S}^*_{\varepsilon} \nu \rangle )-1 \right)
          \mu(d\xi)\right),
      \end{split}
    \end{equation}
     admits
    a unique global solution in the mild sense for all initial values
    $ y_0 \in \mathcal{E}_* $.
  \item The affine transform formula holds true, i.e.
    \[
      \mathbb{E}_{\lambda_0}\left[ \exp( \langle y_0, \lambda_t
        \rangle)\right]=\exp(\langle y_t, \lambda_0 \rangle),
    \]
    where $y_t$ solves $\partial_t y_t=R(y_t)$ for all $y_0 \in \mathcal{E}_*$ in the mild sense with
    $R$ given by \eqref{eq:Riccatisimpl}. Moreover
    $y_t \in \mathcal{E}_*$ for all $t \geq 0$.
  \end{enumerate}
\end{proposition}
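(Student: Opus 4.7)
The plan is to derive all four statements from the generalized Feller machinery of Sections \ref{sec:genFeller}–\ref{sec:approximationTheorems}, together with the linear/deterministic analysis of Proposition \ref{prop:invariance}. By that proposition, the drift part $\mathcal{A}^*\lambda + \nu\beta(\lambda) + \beta(\lambda)\nu$ generates a generalized Feller semigroup on $(\mathcal{E},\rho)$ of transport type $\psi_t(\lambda_0)=\lambda_t$. I would next verify that $\mathcal{B}^{\sqrt{\rho}}(\mathcal{E})\subset \mathcal{B}^\rho(\mathcal{E})$ is left invariant (this follows from the same Picard/Volterra estimate run with $\sqrt{\rho}$ in place of $\rho$, using the linear structure of the equation). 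The jump operator is then
\[
B f(\lambda) = \int_{\mathbb{S}^d_+}\bigl(f(\lambda + \mathcal{S}^*_\varepsilon\nu\,\xi + \xi\,\mathcal{S}^*_\varepsilon\nu) - f(\lambda)\bigr)\,\Tr\!\bigl(\beta(\lambda)\mu(d\xi)\bigr).
\]
Because $\varepsilon>0$, Assumption \ref{ass:semigroup}(i) gives $\mathcal{S}^*_\varepsilon\nu\in Y^*$, so all jumps stay in $\mathcal{E}$; combined with $\int_{\|\xi\|\ge 1}\|\xi\|^2\|\mu(d\xi)\|<\infty$, this lets me verify the four moment conditions \eqref{eq:cond1}–\eqref{eq:cond3} of Proposition \ref{prop:jump_perturbation} for the push-forward kernel using the quadratic form of $\rho$ and the linear-in-$\lambda$ intensity. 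The dissipativity condition \eqref{eq:cond4} reduces to estimating $\|\psi_t(\lambda+\mathrm{jump})\|_{Y^*}^2 - \|\psi_t(\lambda)\|_{Y^*}^2$ against $\rho(\lambda)$, which follows from the linearity of $\psi_t$ together with the quadratic moment condition on $\mu$. Proposition \ref{prop:jump_perturbation} then yields (i) by producing a generalized Feller semigroup with generator $A+B$.

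For (ii) I would apply Theorem \ref{th:kolmogorov_extension} to build the process on a filtered space and Theorem \ref{th:path_properties} (together with Remark \ref{rem:pathproperties}) to get c\`agl\`ad versions of $f_n(\lambda_t)$ for any countable family in $\mathcal{B}^\rho(\mathcal{E})$. To see that the pure-jump counting process $N_t := \sum_{s\le t}\xi_s$ of the jumps of $\lambda$ in direction $\mathcal{S}^*_\varepsilon\nu$ has the declared compensator, I would test with exponentials $e^{\langle y,\cdot\rangle}$ or cylindrical functions and identify the compensator from the jump part of the generator. The mild-solution form then follows from the deterministic variation-of-constants formula of Proposition \ref{prop:existenceY*} applied piecewise between jumps, with the jump integrals interpreted pathwise in Stieltjes sense since $N$ has locally finite variation.

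For (iii) and (iv), I would set up the candidate Riccati flow $y_t$ with right-hand side $R$. Note that for $y\in \mathcal{E}_*$ one has $\langle y,\mathcal{S}^*_\varepsilon\nu\,\xi+\xi\,\mathcal{S}^*_\varepsilon\nu\rangle\le 0$, so $\exp(\cdot)-1\in[-1,0]$ and the $\mu$-integral is well-defined and bounded. Local existence in $\operatorname{dom}(\mathcal{A})$ via a mild formulation and Picard iteration is standard; invariance of $\mathcal{E}_*$ is verified by an infinitesimal argument (the generator $R$ satisfies an inward-pointing condition on $\mathcal{E}_*$, using that $\beta_*$ maps $\mathbb{S}^d_+$ into $C_b(\overline{\mathbb{R}}_+,\mathbb{S}^d_+)$, i.e.~into $-\mathcal{E}_*$). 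Global existence then follows because as long as $y_t\in\mathcal{E}_*$, the exponential term is uniformly bounded and the remaining linear terms cannot blow up in finite time. For the affine transform formula, I would apply the generalized Feller generator of (i) to the cylindrical function $\lambda\mapsto e^{\langle y,\lambda\rangle}$, compute
\[
(A+B)e^{\langle y,\cdot\rangle}(\lambda) = e^{\langle y,\lambda\rangle}\,\langle R(y),\lambda\rangle,
\]
and then verify that $t\mapsto \exp(\langle y_{T-t},\lambda_t\rangle)$ is a local martingale on $[0,T]$; the uniform bound from $y_t\in\mathcal{E}_*$ upgrades it to a true martingale, so taking expectations at $t=0$ and $t=T$ yields the claim.

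The main obstacle I anticipate is the simultaneous control required in (iii)–(iv): proving that the mild Riccati flow preserves $\mathcal{E}_*$ globally while remaining in $\operatorname{dom}(\mathcal{A})\cap\mathcal{D}$ so that the pairing $\langle y_t,\mathcal{S}^*_\varepsilon\nu\xi+\xi\mathcal{S}^*_\varepsilon\nu\rangle$ is well-defined pointwise in time, and then matching this flow precisely to the generator action on exponentials so that the martingale identification in (iv) goes through without extra regularity assumptions. A secondary technical point is the verification of condition \eqref{eq:cond4} of Proposition \ref{prop:jump_perturbation} with the modified weight $\tilde\rho$ from Remark \ref{rem:quasicontractive}, which requires the transport-semigroup structure of $\psi_t$ obtained in Proposition \ref{prop:invariance}.
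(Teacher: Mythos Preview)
Your strategy for (i) and (iii) is essentially the paper's: apply Proposition~\ref{prop:jump_perturbation} to the transport semigroup of Proposition~\ref{prop:invariance} after passing to the equivalent weight $\tilde\rho$ of Remark~\ref{rem:quasicontractive}, and verify \eqref{eq:cond1}--\eqref{eq:cond4} from the quadratic form of $\rho$, the linearity of $\psi_t$, and the second moment of $\mu$; then handle the Riccati equation by Picard iteration in mild form with the linear adjoint flow as base semigroup, using $|\exp(\langle y,\cdot\rangle)-1|\le 1$ on $\mathcal{E}_*$ for the global bound. This matches the paper.

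The genuine differences are in (ii) and (iv). For (ii), your description ``identify the compensator from the generator and apply variation of constants piecewise between jumps'' is not how the paper proceeds, and in fact this route is circular: you cannot speak of ``between jumps'' until you have constructed $N$. The paper instead \emph{extracts} $N$ from the generalized Feller process $\lambda$: for $y\in\mathcal{D}$ the process $M^y_t$ defined by subtracting the drift from $\langle\langle y,\lambda_t\rangle\rangle$ is a martingale, and for special $y$ satisfying $\langle\langle y,\mathcal{S}^*_\varepsilon\nu\xi+\xi\mathcal{S}^*_\varepsilon\nu\rangle\rangle=\pi\xi+\xi\pi$ one sets $N^\pi_t:=M^y_t+\text{(compensator)}$, checks that this is independent of the choice of such $y$ (because the quadratic variation of $M^{y_1-y_2}$ vanishes), and assembles $N$ from finitely many projections $N^\pi$. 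Only then does the analytically weak form and hence the mild form follow. This explicit construction is the main technical content of (ii) and is missing from your sketch.

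For (iv), you propose the probabilistic route: show $t\mapsto\exp(\langle y_{T-t},\lambda_t\rangle)$ is a bounded local martingale. The paper takes the semigroup route instead: $u(t,\lambda)=\mathbb{E}_\lambda[\exp(\langle y_0,\lambda_t\rangle)]$ is the \emph{unique} solution of the abstract Cauchy problem $\partial_t u=Au$ with $u(0,\cdot)=\exp(\langle y_0,\cdot\rangle)$, while $\exp(\langle y_t,\lambda\rangle)$ also solves it because $A\exp(\langle y,\cdot\rangle)=\exp(\langle y,\cdot\rangle)\langle R(y),\cdot\rangle$; hence the two agree. This has the advantage that $y_t\in\mathcal{E}_*$ is then a \emph{consequence} of the affine formula (since the left-hand side is $\le 1$ for all $\lambda_0\in\mathcal{E}$), rather than something to be proved a priori by an inward-pointing argument as you suggest. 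Both routes are valid, but the paper's avoids the regularity bookkeeping you flag as your main obstacle.
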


\begin{proof}
  We assume that $ \nu \neq 0 $, otherwise there is nothing to
  prove. To prove the first assertion we apply Proposition
  \ref{prop:jump_perturbation}.  By Proposition \ref{prop:existenceY*}
  and Proposition \ref{prop:invariance}, the deterministic equation
  \eqref{eq:lambda_abstract} has a mild solution on $\mathcal{E}$
  which -- by Assumption \ref{ass:semigroup} -- defines a generalized
  Feller semigroup $(P_t)_{t\geq 0}$ on
  $ \mathcal{B}^\rho(\mathcal{E}) $.  The operator $A$ in Proposition
  \ref{prop:jump_perturbation} then corresponds to the generator of
  $(P_t)_{t\geq 0}$, i.e.~the semigroup associated to the purely
  deterministic part of \eqref{eq:SPDEbase}. This is a transport
  semigroup and in view of Remark \ref{rem:quasicontractive} we can
  have an equivalent norm with respect to a new weight function
  $ \tilde \rho $ on $\mathcal{B}^{\rho}(\mathcal{E})$, such that
  $ \| P_t \|_{L(B^{\tilde \rho}(\mathcal{E}))} \leq \exp(\omega t) $. Therefore
  we find ourselves in the conditions of Proposition
  \ref{prop:jump_perturbation}.

  Note that by the same arguments as in Proposition
  \ref{prop:invariance} and by applying Theorem
  \ref{theorem:strongcontprocess}, we can prove that
  $(P_t)_{t \geq 0}$ also defines a generalized Feller semigroup on
  $ \mathcal{B}^{\sqrt{\rho}}(\mathcal{E}) $. For the detailed
  proof which translates literally to the present setting we refer to
  \cite{cuctei:18}.
  	
  Finally, we need to verify \eqref{eq:cond1} - \eqref{eq:cond3},
  which read as follows
  \begin{align*}
    \int \ \rho(\lambda + \mathcal{S}^*_{\varepsilon} \nu \xi+ \xi \mathcal{S}^*_{\varepsilon}  \nu) \Tr(\beta(\lambda ) \mu(d\xi)) &\leq M  \rho(\lambda)^2, \\
    \int \sqrt{ \rho}(\lambda + \mathcal{S}^*_{\varepsilon} \nu \xi+ \xi \mathcal{S}^*_{\varepsilon}  \nu) \Tr(\beta(\lambda ) \mu(d\xi)) &\leq M  \rho(\lambda),\\
    \int \Tr( \beta( \lambda ) \mu(d\xi)) &\leq M \sqrt{ \rho(\lambda)} \, ,
  \end{align*}
 which hold true by the second moment condition on $\mu$.
  Concerning \eqref{eq:cond4}, denote as in Remark~\ref{rem:quasicontractive}
  \[
    \tilde{\rho}(\lambda) =\sup_{t\geq 0} \exp(-\omega t) P_t
    \rho(\lambda)  \, .
  \]
In particular we know that $ \rho \leq \tilde \rho$ and it holds that 
$P_tf(x)=f(\psi_t(x))$ where $\psi$ is the solution of \eqref{eq:lambda_abstract} which is linear. Using this together with $ | \sup_t c(t) - \sup_t d(t)| \leq \sup_t |c(t) - d(t) | $ we obtain for some $\widetilde{\omega}$
  \begin{align*}
    & \int  \big | \frac{\tilde{\rho}(\lambda + \mathcal{S}^*_{\varepsilon} \nu \xi+ \xi \mathcal{S}^*_{\varepsilon} \nu)-\tilde{\rho}(\lambda)}{\tilde{\rho}(\lambda)} \big | \Tr( \beta(\lambda) \mu(d\xi))  \\ &\leq\int \big| \frac{\sup_{t \geq 0} \exp(-\omega t) |P_t\rho(\lambda + \mathcal{S}^*_{\varepsilon} \nu \xi + \xi \mathcal{S}^*_{\varepsilon} \nu)-  P_t \rho( \lambda)|}{\tilde\rho(\lambda)} \big | \Tr( \beta(\lambda) \mu(d\xi))\\      
    &\leq\int \big| \frac{\sup_{t \geq 0} \exp(-\omega t) |\rho(\psi_t(\lambda + \mathcal{S}^*_{\varepsilon} \nu \xi +\xi \mathcal{S}^*_{\varepsilon} \nu)) -  \rho(\psi_t (\lambda))|}{\tilde\rho(\lambda)} \big | \Tr( \beta(\lambda) \mu(d\xi))\\   
&= \int \big | \frac{ \sup_{t \geq 0} \exp(-\omega t) ( 2\| \psi_t( \lambda) \|_{Y^*} \; \| \psi_t( \mathcal{S}^*_{\varepsilon} \nu \xi +\xi \mathcal{S}^*_{\varepsilon} \nu )\|_{Y^*} + {\| \psi_t (\mathcal{S}^*_{\varepsilon} \nu \xi+\xi \mathcal{S}^*_{\varepsilon} \nu) \|}_{Y^*}^2 )}{\rho( \lambda )} \big |\\
&\quad \quad \times \Tr( \beta(\lambda) \mu(d\xi)) \leq \widetilde{\omega} \, .
  \end{align*}
  The last inequality holds by the linearity of $\psi$ and the second 
 moment condition on $\mu$.   Proposition
  \ref{prop:jump_perturbation} now allows to conclude that $A+B$,
  where $B$ is given by
  \[
    Bf(\lambda)= \int (f(\lambda + \mathcal{S}^*_{\varepsilon} \nu
    \xi+\xi \mathcal{S}^*_{\varepsilon} \nu ) -f(\lambda))
    \Tr(\beta(\lambda)\mu(d\xi)),
  \]
  generates a generalized Feller semigroup $\widetilde P$ as asserted.

  For (ii), we now construct the probabilistically weak and
  analytically mild solution directly from the properties of the
  generalized Feller process: take
  $ y \in \mathcal{D}$ where $\mathcal{D}$ is defined in \eqref{eq:mathcalD}  and
  consider the $\mathbb{S}^d$-valued martingale
  \begin{equation}\label{eq:mart}
    \begin{split}
      M^y_t & := \langle \langle y, \lambda_t \rangle \rangle - \langle \langle y , \lambda_0 \rangle \rangle - \int_0^t \langle \langle  \mathcal{A} y, \lambda_s \rangle \rangle + \langle \langle  y, \nu \beta(\lambda_s)+ \beta(\lambda_s) \nu \rangle \rangle ds    \\
      & \quad - \int_0^t \int \langle \langle y,
      \mathcal{S}^*_\varepsilon \nu \xi + \xi \mathcal{S}^*_{\varepsilon}
      \nu \rangle \rangle \Tr(\beta( \lambda_s )\mu(d \xi) )ds
    \end{split}
  \end{equation}
  for $ t \geq 0 $ (after an appropriate and possible regularization
  according to Theorem \ref{th:path_properties}).

  Let now $y$ be as above with the additional property that
  $ \langle \langle y , \mathcal{S}^*_{\varepsilon}\nu \xi +\xi
  \mathcal{S}^*_{\varepsilon} \nu\rangle \rangle = \pi \xi + \xi \pi$
  for all $\xi \in \mathbb{S}^d_+$ and some fixed
  $ \pi \in \mathbb{S}^d_+$. For such $ y $ define
  \begin{align}\label{eq:N}
    N^\pi_t = \pi N_t + N_t \pi := M^y_t + \int_0^t \int \langle \langle y, \mathcal{S}^*_\varepsilon \nu \xi
    +\xi \mathcal{S}^*_{\varepsilon} \nu\rangle \rangle \Tr(\beta(\lambda_s) \mu(d \xi)) ds
  \end{align}
  for $ t \geq 0 $, which is a c\`agl\`ad semimartingale. Notice that
  the left hand side only defines $ N^\pi $ and not the more
  suggestive $ \pi N + N \pi $. Then $N^\pi$ does not depend on $y$ by
  construction. Indeed, for all $y_i$ with
  $ \langle \langle y_i , \mathcal{S}^*_{\varepsilon}\nu \xi + \xi
  \mathcal{S}^*_{\varepsilon} \nu\rangle \rangle = \pi \xi + \xi \pi$
  for all $\xi$, $i=1,2$, we clearly have
  \[
    \int_0^t \int \langle \langle y_1-y_2, \mathcal{S}^*_\varepsilon \nu
    \xi+ \xi \mathcal{S}^*_{\varepsilon} \nu \rangle \rangle \Tr(
    \beta(\lambda_s) \mu(d \xi)) ds =0
  \]
  and $ M^{y_1} - M^{y_2} =M^{y_1-y_2}= 0 $ as well.  The latter
  follows from the fact that the martingale $ M^y $ is constant if
  $ \langle \langle y , \mathcal{S}^*_\varepsilon \nu \xi + \xi
  \mathcal{S}^*_\varepsilon \nu \rangle \rangle = 0 $ for all $\xi$,
  since its quadratic variation vanishes in this case.

  Moreover, by the definition of $N^\pi$ in \eqref{eq:N} its
  compensator is given by
  $\int_0^t \int (\pi \xi + \xi \pi) \Tr(\beta(\lambda_s)
  \mu(d\xi))ds$.  Since it is sufficient to perform the previous
  construction for finitely many $ \pi $ to obtain all necessary
  projections, a process $ N $ can be defined such that
  $ N^\pi = \pi N + N \pi $, as suggested by the notation.

  By \eqref{eq:mart} and the very definition of \eqref{eq:N} we obtain
  that
  \begin{align*}
    \langle \langle y, \lambda_t \rangle \rangle & = \langle \langle y , \lambda_0 \rangle \rangle  + \int_0^t \langle \langle \mathcal{A}y , \lambda_s \rangle \rangle ds + \int_0^t \langle \langle y,  \nu \beta(\lambda_s) + \beta(\lambda_s) \nu \rangle \rangle  ds \\
                                                 &\quad + \langle \langle y , \mathcal{S}^*_\varepsilon \nu N_t\rangle \rangle +  \langle \langle y ,N_t \mathcal{S}^*_\varepsilon \nu \rangle \rangle \\
  \end{align*}
  for $ y \in \mathcal{D} $. This analytically
  weak form can be translated into a mild form by standard
  methods. Indeed, notice that the integral is just along a finite
  variation path and therefore we can readily apply variation of
  constants. The last assertion about the c\`agl\`ad property is a consequence of Theorem \ref{th:path_properties} by noting that $\rho(\lambda)$ does not explode. This proves (ii).
		 
  Concerning (iii), note first that we have a unique mild solution to
  \begin{align}\label{eq:linadjoint}
    \partial_t y_t = \mathcal{A}y_t +  \beta_*\left(\int_0^{\infty} y(x) \nu(dx) + \int_0^{\infty} \nu(dx) y(x)\right), \quad y_0 \in Y,
  \end{align}
  since this is the adjoint equation of
  \eqref{eq:lambda_abstract}. For the equation with jumps we proceed
  as in Proposition \ref{prop:existenceY*} via Picard
  iteration. Denote the semigroup associated to \eqref{eq:linadjoint}
  by $\mathcal{S}^{\beta_*}$ and define
  \begin{align*}
    y_t^0&= y_0,\\
    y_t^{n}&=\mathcal{S}^{\beta_*}_ty_0+\int_0^t \mathcal{S}^{\beta_*}_{t-s}\beta_*\left(\int_{\mathbb{S}^d_+}
             \left(\exp(  \langle y_s^{n-1} , \mathcal{S}^*_{\varepsilon} \nu \xi +\xi  \mathcal{S}^*_{\varepsilon} \nu  \rangle )-1  \right) \mu(d\xi)\right) ds.
  \end{align*}
  Moreover, for $t \in [0,\delta]$ for some $\delta >0$ we have by
  local Lipschitz continuity of $x\mapsto \exp(x)$
  \begin{align*}
    \| y_t^{n+1}-y_t^n\|_{Y} &\leq  \|\int_0^t \mathcal{S}^{\beta_*}_{t-s}\beta_* \left(\int_{\mathbb{S}_+^d }( \exp(  \langle y^{n}_s , \mathcal{S}^*_{\varepsilon} \nu \xi \rangle )- \exp(  \langle y^{n-1}_s , \mathcal{S}^*_{\varepsilon} \nu \xi \rangle )  )\mu(d\xi) \right) ds\|_{Y}\\
                             & \leq \int_0^t C \|\mathcal{S}^{\beta_*}_{t-s}\beta_*\|_{\text{op}} \| y_s^{n}-y_s^{n-1}\|_{Y} \left(\int_{\mathbb{S}_+^d} \|\mathcal{S}^*_{\varepsilon} \nu \xi\|_{Y^*}\mu(d\xi) \right) ds.
  \end{align*}
  By an extension of Gronwall's inequality (see \cite[Lemma 15]{D:99})
  this yields convergence of $(y_t^n)_{n\in \mathbb{N}}$ with respect
  to $\|\cdot \|_{Y}$ and hence the existence of a unique local mild
  solution to \eqref{eq:Riccatisimpl} up to some maximal life time
  $t_+(y_0)$.  That $t_+(y_0)= \infty$ for all $y_0 \in \mathcal{E}_*$
  follows from the subsequent estimate
  \begin{align*}
    \| y_t\|_{Y} &= \|\mathcal{S}^{\beta_*}_t y_0 +\int_0^t \mathcal{S}^{\beta_*}_{t-s} \beta_* \left(\int_{\mathbb{S}_+^d}  \left(\exp(  \langle y_s , \mathcal{S}^*_{\varepsilon} \nu \xi + \xi  \mathcal{S}^*_{\varepsilon} \nu \rangle )-1 \right) \mu(d\xi) \right)ds \|_{Y}\\
                 &\leq \|\mathcal{S}^{\beta_*}_t y_0 \|_Y +  \int_0^t \|S^{\beta_*}_{t-s} \beta_*\|_{\text{op}} \left(\int_{\mathbb{S}_+^d} |\exp(\langle y_s , \mathcal{S}^*_{\varepsilon} \nu \xi +\xi \mathcal{S}^*_{\varepsilon} \nu \rangle )-1| \mu(dx) \right)ds\\
                 &\leq \|\mathcal{S}^{\beta_*}_t y_0 \|_Y  +  t \sup_{s \leq t}\|\mathcal{S}^{\beta_*}_{s} \beta_*\|_{\text{op}} \mu(\mathbb{S}_+^d),
  \end{align*}
  where we used
  $|\exp(\langle y , \mathcal{S}^*_{\varepsilon} \nu \xi + \xi
  \mathcal{S}^*_{\varepsilon} \nu \rangle )-1| \leq 1$ for all
  $y \in \mathcal{E}_*$ in the last estimate.
		
  To prove (iv), just note that by the existence of a generalized
  Feller semigroup the abstract Cauchy problem for the initial value
  $ \exp(\langle y_0,.\rangle) $ can be solved uniquely for
  $ y_0 \in \mathcal{E}_* $. Indeed,
  $\mathbb{E}_{\lambda} [\exp(\langle y_0,\lambda_t\rangle)]$ uniquely
  solves
  \[
    \partial_t u(t,\lambda)= A u(t, \lambda), \quad u(0, \lambda) =
    \exp(\langle y_0, \lambda \rangle),
  \]
  where $A$ denotes the generator associated to \eqref{eq:SPDEbase}.
  Setting $u(t,\lambda)=\exp(\langle y_t, \lambda \rangle )$, we have
  \[
    \partial_t u(t,\lambda)=\exp(\langle y_t, \lambda \rangle) R(y_t),
  \]
  where the right hand side is nothing else than
  $A\exp(\langle y_t, \lambda \rangle)$, hence the affine transform formula holds
  true. This also implies that $y_t \in \mathcal{E}_*$ for all
  $t \geq 0$, simply because
  $\mathbb{E}_{\lambda} [\exp(\langle y_0,\lambda_t\rangle)] \leq 1$
  for all $\lambda \in \mathcal{E}$.
\end{proof}

We are now ready to state the main theorem of this section, namely an
existence and uniqueness result for equations of the type
\begin{align}\label{eq:SPDEmain}
  d \lambda_t = \mathcal{A}^* \lambda_t dt + \nu dX_t +  dX_t \nu ,
\end{align}
where $(X_t)_{t \geq 0}$ is an $\mathbb{S}^d_+$-valued pure jump It\^o
semimartingale of the form
\begin{align}\label{eq:X}
  X_t = \int_0^t \beta(\lambda_s)ds+ \int_0^t \int_{\mathbb{S}^d_+} \xi
  \mu^X(d\xi,ds) ,
\end{align}
with $\beta$ specified in \eqref{eq:op} satisfying Assumption
\ref{ass:para} and random measure of the jumps $\mu^X$. Its
compensator satisfies the following condition:

\begin{assumption}\label{ass:comp}
  The compensator of $\mu^X$ is given by
  \[
    \Tr\left(\beta(\lambda_t)\frac{\mu(d\xi)}{\|\xi\| \wedge 1}\right)
  \]
  where $\mu$ is an $\mathbb{S}^d_+$-valued finite measure on
  $\mathbb{S}^d_+$ satisfying
  $\int_{\|\xi\| \geq 1}\| \xi \|^2 \|\mu(d\xi)\| < \infty$.
\end{assumption}

For the formulation of the subsequent theorem we shall need the
following set of Fourier basis elements
\begin{align}\label{eq:Fourier}
  \mathcal{D}=\{ f_y: \mathcal{E} \to [0,1]; \lambda \mapsto \exp( \langle y , \lambda \rangle) \, | \, y \in \mathcal{E}_* \cap \operatorname{dom}(\mathcal{A}) \text{ s.t.~} \langle \langle y, \nu \rangle \rangle \text{ is well defined}\}.
\end{align}

\begin{theorem}\label{thm:main}
  Let Assumptions \ref{ass:semigroup}, \ref{ass:para} and
  \ref{ass:comp} be in force.
  \begin{enumerate}
  \item Then the stochastic partial differential equation
    \eqref{eq:SPDEmain} admits a unique Markovian solution
    $(\lambda_t)_{t\geq0} $ in $ \mathcal{E} $ given by a generalized
    Feller semigroup on $ \mathcal{B}^\rho(\mathcal{E}) $ whose
    generator takes  on the set of Fourier elements
    \begin{align*}
      f_y: \mathcal{E} \to [0,1]; \lambda \mapsto \exp( \langle y , \lambda\rangle)   
    \end{align*}
    for $y \in \mathcal{D} \cap \mathcal{E}_*$ where $\mathcal{D}$ is
    defined in \eqref{eq:mathcalD}
      the form
    \begin{align}\label{eq:genmain}
      Af_y(\lambda)=f_y(\lambda) (\langle \mathcal{A}y, \lambda \rangle+ \langle\mathcal{R}(\langle \langle y, \nu \rangle \rangle), \lambda \rangle),
    \end{align}
 with
    $\mathcal{R}: \mathbb{S}^d_- \to Y$ given by
    \begin{equation}\label{eq:Rtrad}
      \begin{split}
        \mathcal{R}(u) &= \beta_*(u)+
        \beta_*\left(\int_{\mathbb{S}^d_+} \left(\exp( \Tr(u \xi) -1
          \right) \frac{\mu(d\xi)}{\|\xi\| \wedge 1} \right).
      \end{split}
    \end{equation}    
  \item This generalized Feller process is \emph{also} a
    probabilistically weak and analytically mild solution of
    \eqref{eq:SPDEmain}, i.e.
    \[
      \lambda_t = \mathcal{S}^*_t \lambda_0 ds +\int_0^t
      \mathcal{S}^*_{t-s}\nu dX_s + \int_0^t dX_s
      \mathcal{S}_{t-s}^*\nu,
    \]
  This justifies Equation \eqref{eq:SPDEmain}, in particular for
    every initial value the process $ X $ can be constructed on an
    appropriate probabilistic basis. The stochastic integral is
    defined in a pathwise way along finite variation paths. 
Moreover, for every family $(f_n)_n \in \mathcal{B}^{\rho}(\mathcal{E})$, $t \mapsto f_n(\lambda_t)$ can be chosen to be c\`ag for all $n$.    
  \item The affine transform formula is satisfied, i.e.
    \[
      \mathbb{E}_{\lambda_0}\left[ \exp( \langle y_0, \lambda_t
        \rangle)\right]=\exp(\langle y_t, \lambda_0 \rangle),
    \]
    where $y_t$ solves $\partial_t y_t=R(y_t)$ for all
    $y_0 \in \mathcal{E}_*$ and $t >0$ in the mild sense with
    $R: \mathcal{D} \cap \mathcal{E}_* \to Y$ given by
    \begin{align}\label{eq:Riccatimain}
      R(y) = \mathcal{A} y + \mathcal{R}( \langle \langle y , \nu \rangle\rangle) 
    \end{align}
    with $\mathcal{R}$ defined in \eqref{eq:Rtrad}.  Furthermore,
    $y_t \in \mathcal{E}_*$ for all $t \geq 0$.
  \item For all $\lambda_0 \in \mathcal{E}$, the corresponding
    stochastic Volterra equation, $V_t:= \beta(\lambda_t )$, given by
    \begin{equation}\label{eq:Voltrep}
      \begin{split}
        V_t= \beta( \lambda_t ) &=\beta(\mathcal{S}_t^* \lambda_0) +
        \int_0^t\beta(\mathcal{S}_{t-s}^*\nu) dX_s + \int_0^t dX_s \beta(\mathcal{S}_{t-s}^*\nu)\\
        & =h(t) + \int_0^t K(t-s)dX_s +\int_0^t dX_s K(t-s)
      \end{split}
    \end{equation}
    with $h(t)=\beta( \mathcal{S}_t^* \lambda_0 )$ admits a 
    probabilistically weak solution with c\`ag trajectories.
  \item[(v)] The Laplace transform of the Volterra equation $V_t$ is
    given by
    \begin{align}\label{eq:Voltlaplace}
      \mathbb{E}_{\lambda_0}\left[\exp\left(\Tr(u V_t)\right)\right]=\exp\left(\Tr(u h(t))+ \int_0^t \Tr (\mathfrak{R}(\psi_s) h(t-s) ) ds\right),
    \end{align}
    where $h(t)=\beta(\mathcal{S}_t^*\lambda_0 )$,
    $\mathfrak{R}: \mathbb{S}^d_- \to \mathbb{S}^d_-,\, u \mapsto
    \mathfrak{R}(u)= u + \int_{\mathbb{S}_+^d} (e^{\Tr(u\xi)}-1 )
    \frac{\mu(d\xi)}{\| \xi \| \wedge 1}$ and $\psi_t$ solves the
    matrix Riccati Volterra equation
    \[
      \psi_t=u K(t)+\int \mathfrak{R}(\psi_s) K(t-s) ds, \quad t >0.
    \]
     Hence the solution of the stochastic Volterra equation in
    \eqref{eq:Voltrep} is unique in law.
  \end{enumerate}
\end{theorem}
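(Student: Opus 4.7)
The overall plan is to reduce Theorem \ref{thm:main} to Proposition \ref{prop:epsjumps} via a two-step approximation: first truncate small jumps so the compensator becomes a finite measure, and then let the truncation parameter and the jump-time regularization $\varepsilon$ shrink to zero, using Theorem \ref{thm:approximation_gen} to pass to the limit. Precisely, for $n \in \mathbb{N}$ set $\mu_n(d\xi) := \mathbf{1}_{\{\|\xi\| > 1/n\}} \frac{\mu(d\xi)}{\|\xi\| \wedge 1}$, which is a finite $\mathbb{S}^d_+$-valued measure by Assumption \ref{ass:comp}, and let $\varepsilon_n \downarrow 0$. Proposition \ref{prop:epsjumps}, applied to the pair $(\mu_n,\varepsilon_n)$, provides a generalized Feller semigroup $(P^n_t)_{t \geq 0}$ on $\mathcal{B}^\rho(\mathcal{E})$ with generator $A^n$ whose action on $f_y \in \mathcal{D}$ (see \eqref{eq:Fourier}) is given by \eqref{eq:genmain} with $\mu$ replaced by $\mu_n$ and $\nu$ by $\mathcal{S}^*_{\varepsilon_n}\nu$. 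The uniform growth bound $\|P^n_t\|_{L(\mathcal{B}^\rho(\mathcal{E}))} \leq M e^{\omega t}$ follows from the second moment condition $\int_{\|\xi\|\geq 1}\|\xi\|^2 \|\mu(d\xi)\| < \infty$, which is uniform in $n$.

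To apply Theorem \ref{thm:approximation_gen}, I would take $D = \{f_y : y \in \mathcal{D} \cap \mathcal{E}_*\}$, whose linear span is dense in $\mathcal{B}^\rho(\mathcal{E})$ by Theorem \ref{theorem:boundedweakcontapprox} (Fourier elements separate measures and lie in $\mathcal{B}^\rho$ since $\rho(\lambda) \geq 1$). For $f_y \in D$, an explicit computation using \eqref{eq:Riccatisimpl} applied to $P^m_u f_y = f_{y_u^{(m)}}$ gives
\[
  A^n P^m_u f_y - A^m P^m_u f_y = f_{y_u^{(m)}}\cdot \Bigl\langle \beta_*\Bigl(\int_{\mathbb{S}^d_+}(e^{\langle\langle y_u^{(m)}, \mathcal{S}^*_{\varepsilon_n}\nu\xi + \xi \mathcal{S}^*_{\varepsilon_n}\nu\rangle\rangle}-1)(\mu_n - \mu_m)(d\xi)\Bigr)-(\varepsilon_n \leftrightarrow \varepsilon_m \text{ term})\Bigr\rangle,
\]
which is controlled in $\|\cdot\|_\rho$ by the dominated convergence theorem applied to $|e^{\Tr(u\xi)}-1| \leq C(\|\xi\| \wedge \|\xi\|^2)$ for $u \in \mathbb{S}^d_-$, together with Assumption \ref{ass:semigroup} and the strong continuity of $\mathcal{S}^*$ on $Z^*$. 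This yields the existence of a limiting generalized Feller semigroup $(P^\infty_t)_{t \geq 0}$ whose generator has the form \eqref{eq:genmain} on $D$, establishing (i). Uniqueness follows because two Markovian solutions would give semigroups that coincide on the dense set $D$ by the Kolmogorov equation, hence everywhere.

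For (ii), the mild form and construction of $X$ on a probabilistic basis proceed exactly as in Proposition \ref{prop:epsjumps}(ii): for $y \in \mathcal{D}$ the process $\langle\langle y, \lambda_t\rangle\rangle$ admits a semimartingale decomposition whose jump part identifies $X$ via the analog of \eqref{eq:N}; the c\`ag regularization uses Theorem \ref{th:path_properties} together with Remark \ref{rem:pathproperties} (since in the general case $M>1$ only c\`ag is guaranteed). Part (iii) is a direct consequence of the affine structure: if $y_t$ solves $\partial_t y_t = R(y_t)$ with $R$ as in \eqref{eq:Riccatimain}, then $u(t,\lambda) := \exp(\langle y_t, \lambda\rangle)$ solves the Kolmogorov forward equation $\partial_t u = Au$, and uniqueness of the Cauchy problem for the generalized Feller semigroup forces $u(t,\lambda) = \mathbb{E}_\lambda[\exp(\langle y_0, \lambda_t\rangle)]$; the invariance $y_t \in \mathcal{E}_*$ follows because the left-hand side is bounded by $1$. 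Existence and uniqueness of the mild solution of the Riccati equation is obtained by Picard iteration as in Proposition \ref{prop:epsjumps}(iii).

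For (iv), applying $\beta$ to the mild form from (ii) and using \eqref{eq:op} together with \eqref{eq:kernel} gives \eqref{eq:Voltrep}; the c\`ag property transfers from $\lambda_t$. Finally, for (v), specialize (iii) with $y_0 = \beta_*(u)$ for $u \in \mathbb{S}^d_-$: by the adjoint relation $\Tr(u V_t) = \langle \beta_*(u), \lambda_t\rangle$, so
\[
\mathbb{E}_{\lambda_0}[\exp(\Tr(u V_t))] = \exp(\langle y_t, \lambda_0\rangle),
\]
and applying $\beta$ to the mild form of $y_t$ produces, with $\psi_t := \beta(\mathcal{S}^{\beta_*}\text{-transform of }y_t)$ interpreted via the resolvent identity for $K$, the matrix Riccati Volterra equation stated in (v); plugging this into $\langle y_t, \lambda_0\rangle$ and using $h(t) = \beta(\mathcal{S}^*_t\lambda_0)$ yields \eqref{eq:Voltlaplace}. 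Uniqueness in law of $V_t$ is then immediate from the uniqueness of the finite-dimensional distributions determined by the Laplace transform on $\mathbb{S}^d_-$.

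The main technical obstacle I anticipate is the verification of the approximation hypothesis of Theorem \ref{thm:approximation_gen}, specifically ensuring that the difference $A^n P^m_u f_y - A^m P^m_u f_y$ is controlled uniformly in $u \in [0,t]$ by a sequence $a^{f,t}_{nm} \to 0$ with respect to a norm $\|\cdot\|_D$ that is compatible with $\mathcal{B}^\rho$; this requires carefully exploiting the second moment condition on $\mu$ together with the strong continuity of $\mathcal{S}^*$ on $Z^*$ to bootstrap the elementary pointwise convergence into uniform convergence. A secondary delicate point is the derivation of the Riccati Volterra equation in (v) from the abstract Riccati equation in (iii), which requires careful use of the resolvent identity \eqref{eq:resolvent}.
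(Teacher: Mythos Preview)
Your approach is essentially the paper's: approximate by Proposition~\ref{prop:epsjumps} with $\varepsilon=1/n$ and truncated jump measure $\mu_n = \mathbf{1}_{\{\|\xi\|>1/n\}}\,\mu(d\xi)/(\|\xi\|\wedge 1)$, then invoke Theorem~\ref{thm:approximation_gen} on the Fourier family $D=\{f_y\}$; parts (ii)--(v) then follow the template of Proposition~\ref{prop:epsjumps}. Two points deserve correction.

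First, you have misidentified where the work lies. You write that the uniform growth bound $\|P^n_t\|\leq M e^{\omega t}$ ``follows from the second moment condition,'' and then flag the Cauchy condition on $A^nP^m_u f_y - A^mP^m_u f_y$ as the main obstacle. In the paper it is the other way around: the Cauchy estimate is dispatched in a few lines by dominated convergence (exactly as you sketch), whereas the uniform growth bound is \emph{not} delivered by Proposition~\ref{prop:jump_perturbation}, since the constants there depend on $\|\mathcal{S}^*_{\varepsilon_n}\nu\|_{Y^*}$ and on $\mu_n(\mathbb{S}^d_+)$, both of which blow up as $n\to\infty$. The paper instead goes back to the mild representation of $\lambda^n$ from Proposition~\ref{prop:epsjumps}(ii), bounds $\mathbb{E}[\|\lambda^n_t\|_{Y^*}^2]$ by splitting into the compensated martingale part (It\^o isometry, producing $\int\|\xi\|^2\|F^n(d\xi)\|$, which is bounded by $\int_{\|\xi\|\leq 1}\|\mu\|+\int_{\|\xi\|>1}\|\xi\|^2\|\mu\|$) and the compensator part, and closes with a Gronwall/resolvent argument against the $L^2$ kernel $\|\mathcal{S}^*_{t-s}\nu\|_{Y^*}^2$. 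Without this step your appeal to Theorem~\ref{thm:approximation_gen} has no uniform bound to work with.

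Second, for (v) the resolvent identity~\eqref{eq:resolvent} is not used. The paper simply pairs the mild form $y_t=\mathcal{S}_t\beta_*(u)+\int_0^t \mathcal{S}_{t-s}\mathcal{R}(\langle\langle y_s,\nu\rangle\rangle)\,ds$ with $\lambda_0$ to obtain~\eqref{eq:Voltlaplace} directly, and then observes that $\psi_t:=\langle\langle y_t,\nu\rangle\rangle$ (not a resolvent transform) satisfies the stated Volterra Riccati equation by pairing the same mild form with $\nu$ via $\langle\langle\cdot,\nu\rangle\rangle$ and using $K(t)=\beta(\mathcal{S}^*_t\nu)$.
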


\begin{remark}\label{rem:comment}
One essential point here is that we
  loose the c\`agl\`ad property as stated  in Proposition \ref{prop:epsjumps} (ii) when we let $ \varepsilon $ of $\mathcal{S}_{\varepsilon}$  tend to
  zero. As long as the kernel $ K $ has a singularity at $ t = 0 $ it
  is impossible to preserve finite growth bounds \emph{with}
  $ M = 1 $, as
  $ \varepsilon \to 0 $, but we get c\`ag versions (compare with the second conclusion in Theorem \ref{th:path_properties} and Remark \ref{rem:pathproperties}).
\end{remark}
		
\begin{remark}
  Note that for $\beta$ as of Example \ref{ex:canonical} the above
  equations simplify considerably. In particular $\beta_*$ in
  \eqref{eq:Rtrad} is simply the identity.
\end{remark}
		
\begin{proof}
  We apply Theorem \ref{thm:approximation_gen} and consider a sequence
  of generalized Feller semigroups $(P^n)_{n \in \mathbb{N}}$ with
  generators $A^n$ corresponding to the solution $\lambda^n$ of
  \eqref{eq:SPDEbase} for $\varepsilon =\frac{1}{n}$, and compensator
  \[
    \Tr\left(\beta(\lambda^n_t)\frac{1_{\{\|\xi\| > \frac{1}{n}\}}
        \mu(d\xi)}{\|\xi\| \wedge 1}\right), \quad n \in \mathbb{N}.
  \]
  Let us first establish a uniform growth bound for this sequence. To
  this end denote
  \[
    F^n(d\xi):=\frac{1_{\{\|\xi\| > \frac{1}{n}\}} \mu(d\xi)}{\|\xi\|
      \wedge 1}.
  \]
  Note that for the solution of \eqref{eq:SPDEbase}, we have due to
  Proposition \ref{prop:epsjumps} (ii) the following estimate for
  $t \in [0,T]$ for some fixed $T>0$
  \begin{align*}
    \mathbb{E}[\| \lambda^{n}_t\|^2_{Y^*}] &\leq 5\|\mathcal{S}^*_t \lambda_0\|^2_{Y^*}+ 10t\int_0^t \|\mathcal{S}^*_{t-s}\nu \|^2_{Y^*} \|\beta\|^2_{\text{op}} \mathbb{E}[\|\lambda^{n}_s\|^2_{Y^*}]ds \\
                                           &\quad + 10\mathbb{E} [\|\int_0^t \mathcal{S}^*_{t-s+\frac{1}{n}} \nu  dN_s -\int_0^t \int \mathcal{S}^*_{t-s+\frac{1}{n}} \nu \xi \Tr(\beta( \lambda^{n}_s ) F^n(d\xi) )ds\|_{Y^*}^2]\\
                                           &\quad +  10\mathbb{E} [\|\int_0^t dN_s\mathcal{S}^*_{t-s+\frac{1}{n}} \nu   -\int_0^t \int  \xi \mathcal{S}^*_{t-s+\frac{1}{n}} \nu \Tr(\beta( \lambda^{n}_s ) F^n(d\xi) )ds)\|_{Y^*}^2]\\
                                           &\quad + 10\mathbb{E} [\|\int_0^t \int \mathcal{S}^*_{t-s+\frac{1}{n}} \nu \xi \Tr(\beta( \lambda^{n}_s ) F^n(d\xi) )  ds\|_{Y^*}^2]\\
                                           &\quad +  10\mathbb{E} [\|\int_0^t \int\xi  \mathcal{S}^*_{t-s+\frac{1}{n}} \nu  \Tr(\beta( \lambda^{n}_s ) F^n(d\xi) )  ds\|_{Y^*}^2].
  \end{align*}
  As a consequence of It\^o's isometry the martingale part can be
  estimated by
  \begin{align*}
    &\mathbb{E} [\|\int_0^t \mathcal{S}^*_{t-s+\frac{1}{n}} \nu  dN_s -\int_0^t \int \mathcal{S}^*_{t-s+\frac{1}{n}} \nu \xi \Tr(\beta( \lambda^{n}_s ) F^n(d\xi) )ds\|_{Y^*}^2]\\
    &\quad \leq \mathbb{E} [\|\int_0^t \int  \|\mathcal{S}^*_{t-s+\frac{1}{n}} \nu\|_{Y^*}^2  \| \xi\|^2 \Tr(\beta( \lambda^{n}_s ) F^n(d\xi) )ds]\\
    &\quad \leq\int  \| \xi\|^2 \|F^n(d\xi)\| \int_0^t \|\mathcal{S}^*_{t-s+\frac{1}{n}} \nu\|_{Y^*}^2  \|\beta\|_{\text{op}} \mathbb{E} [ \|\lambda^{n}_s \|_{Y^*}]  ds\\
    &\quad \leq
      \left( \int_{\| \xi\| \leq 1}\| \mu(d\xi)\| + \int_{\| \xi \| > 1} \|\xi\|^2 \| \mu (d\xi)\|\right) 
      \int_0^t \|\mathcal{S}^*_{t-s+\frac{1}{n}} \nu\|_{Y^*}^2  \|\beta\|_{\text{op}} \mathbb{E} [ \|\lambda^{n}_s \|_{Y^*}]  ds\\
    &\quad \leq
      \widetilde{C}
      \int_0^t \|\mathcal{S}^*_{t-s+\frac{1}{n}} \nu\|_{Y^*}^2  \|\beta\|_{\text{op}} \mathbb{E} [ \|\lambda^{n}_s \|_{Y^*}]  ds\\
    &\quad \leq \widetilde{C} K \int_0^t \|\mathcal{S}^*_{t-s+\frac{1}{n}} \nu\|_{Y^*}^2  (1 +\|\beta\|^2_{\text{op}} \mathbb{E} [ \|\lambda^{n}_s \|^2_{Y^*}] ) ds
  \end{align*}
  where
  $\widetilde{C}= \left( \int_{\| \xi\| \leq 1}\| \mu(d\xi)\| +
    \int_{\| \xi \| > 1} \|\xi\|^2 \| \mu (d\xi)\|\right)$ and $K$
  some other constant.  Moreover, for the last terms we have
  \begin{align*}
    & \mathbb{E} [\|\int_0^t \int \mathcal{S}^*_{t-s+\frac{1}{n}} \nu \xi \Tr(\beta( \lambda^{n}_s ) F^n(d\xi) )  ds\|_{Y^*}^2]                                                 \\
    &\quad \leq      t \int_0^t \| \mathcal{S}^*_{t-s+\frac{1}{n}} \nu\|_{Y^*}^2  \mathbb{E}[\| \int \xi \Tr(\beta( \lambda^{n}_s ) F^n(d\xi) ) \|^2 ]ds    \\
    &\quad \leq     2t \int_0^t \| \mathcal{S}^*_{t-s+\frac{1}{n}} \nu\|_{Y^*}^2  \mathbb{E}[\| \int_{\| \xi\| \leq 1} \xi \Tr(\beta( \lambda^{n}_s ) F^n(d\xi) ) \|^2 + \|\int_{\| \xi\| \geq 1} \xi \Tr(\beta( \lambda^{n}_s ) F^n(d\xi) )\|^2 ]ds     \\
    &\quad \leq     2t \int_0^t \| \mathcal{S}^*_{t-s+\frac{1}{n}} \nu\|_{Y^*}^2     \|\beta\|^2_{\text{op}}\mathbb{E}[ \|\lambda^{n}_s\|^2_{Y^*}] \int \|\mu (d\xi)\| \left( \int_{\| \xi\| \leq 1}\| \mu(d\xi)\| + \int_{\| \xi \| > 1} \|\xi\|^2 \| \mu (d\xi)\|\right)\\
    &\quad \leq    2t  {\widehat {C}} \int_0^t \| \mathcal{S}^*_{t-s+\frac{1}{n}} \nu\|_{Y^*}^2     \|\beta\|^2_{\text{op}}\mathbb{E}[ \|\lambda^{n}_s\|^2_{Y^*}] 
  \end{align*}         
  where ${\widehat{C}}= \int \|\mu (d\xi)\| \widetilde{C}$.  Putting
  this together, we obtain
  \begin{align*}
    \mathbb{E}[\| \lambda^{n}_t\|^2_{Y^*}]      
    &\leq C_0 \|\lambda_0\|^2_{Y^*}  + 10t\int_0^t \|\mathcal{S}^*_{t-s}\nu \|^2_{Y^*} \|\beta\|^2_{\text{op}} \mathbb{E}[\|\lambda^{n}_s\|^2_{Y^*}]ds\\
    &\quad +20 \widetilde{C}K
      \int_0^t \|\mathcal{S}^*_{t-s+\frac{1}{n}} \nu\|_{Y^*}^2  ds\\
    &\quad + 20 (\widetilde{C}K + 2t  {\widehat {C}})\int_0^t \| \mathcal{S}^*_{t-s+\frac{1}{n}} \nu\|_{Y^*}^2     \|\beta\|^2_{\text{op}}\mathbb{E}[ \|\lambda^{n}_s\|^2_{Y^*}] \\
    &\leq C_0  \|\lambda_0\|^2_{Y^*} + C_1\int_0^t  \|\mathcal{S}^*_{t-s} \nu \|_{Y^*}^2 ds + C_2 \int_0^t \|\mathcal{S}^*_{t-s} \nu\|^2_{Y^*}\mathbb{E}[ \|\lambda^{n}_s\|^2_{Y^*}] ds
  \end{align*}
  where $C_0$ and $C_2$ depend on $T$. We use
  $\|S_t^*\lambda_0\|^2 \leq C_0 \|\lambda_0\|^2$ for $t \in [0,T]$,
  as well as
  $ \|\mathcal{S}^*_{t-s+\frac{1}{n}} \nu\|_{Y^*} \leq C
  \|\mathcal{S}^*_{t-s} \nu\|_{Y^*}$ for some constant $C$ and all
  $n \in \mathbb{N}$ due to strong continuity. Exactly by the same
  arguments as in the proof of Proposition \ref{prop:existenceY*} , we
  thus obtain for $t \in [0,T]$ for some fixed $T$
  \[
    \mathbb{E}[\|\lambda_t\|^2_{Y^*}] \leq
    \widetilde{C}(\|\lambda_0\|^2_{Y^*}+1) (1-\int_0^t R'(s), ds),
  \]
  where $R'$ denotes the resolvent of
  $-C_2\|\mathcal{S}^*_{t-s}\nu \|_{Y^*} $. Hence,
  $\mathbb{E}[\rho(\lambda_t)]\leq C \rho(\lambda_0)$ for
  $t \in [0,T]$. From this the desired uniform growth bound
  $\|P_t\|_{L(\mathcal{B}^{\rho}(\mathcal{E}))} \leq M \exp(\omega t)$
  for some $M \geq 1 $ and $\omega \in \mathbb{R}$ follows.

  For the set $D$ as of Theorem \ref{thm:approximation_gen} we here
  choose Fourier basis elements of the form
  \begin{align}\label{eq:Fourier1}
    f_y: \mathcal{E} \to [0,1]; \lambda \mapsto \exp( \langle y , \lambda \rangle) 
  \end{align}
  such that $y \in \mathcal{E}_*$ and
  $\lambda \mapsto \exp( \langle y , \lambda \rangle)$ lies in
  $\cap_{n \geq 1} \operatorname{dom}(A^n)$, whose span is dense,
  whence (i) of Theorem \ref{thm:approximation_gen}. Here, $A^n$
  denotes the generator corresponding to \eqref{eq:SPDEbase} with
  $\varepsilon =\frac{1}{n}$ and $\mu$ replaced by $F^n$.  We now
  equip $\operatorname{span}(D)$ with the uniform norm
  $\| \cdot\|_{\infty}$ and verify Condition (ii), i.e.~we check
  \begin{align}\label{eq:CondII}
    \|A^n P^m_u f_y -A^mP^m_u f_y\|_{\rho} \leq \|f_y\|_{\infty}a_{nm}
  \end{align}
  for all $0 \leq u \leq t$ with $a_{nm} \to 0$ as $n,m \to \infty$,
  and possibly depending on $y$.  Note that
  \[
    A^nf_y(\lambda)=\langle R^n(y), \lambda\rangle f_y(\lambda),
  \]
  where $R^n$ corresponds to \eqref{eq:Riccatisimpl} for
  $\varepsilon =\frac{1}{n}$ and $\mu$ replaced by $F^n$. As $P^n$
  leaves $D$ invariant for all $n \in \mathbb{N}$ by Proposition
  \ref{prop:epsjumps} (iv), we have
  \begin{align*}
    &\frac{|A^n P^m_u f_y(\lambda) -A^m P^m_u f_y(\lambda)|}{\rho(\lambda)}\\
    &\quad\leq\frac{f_{y^m_u}(\lambda)}{\rho(\lambda)}\Bigg(\beta_{*}\Big(\int_{\mathbb{S}^d_+} \underbrace{\exp(\langle y^m_u, \mathcal{S}^*_{\frac{1}{m}} \nu\xi + \xi \mathcal{S}^*_{\frac{1}{m}} \nu \rangle)1_{\{\| \xi\| \geq \frac{1}{n}\}}}_{:=b_{nm}(\xi)}\\
    &\quad \quad \quad \quad \quad \quad \quad \times\underbrace{|\exp(\langle y^m_u, (\mathcal{S}^*_{\frac{1}{n}}\nu -\mathcal{S}^*_{\frac{1}{m}} \nu ) \xi+ \xi (\mathcal{S}^*_{\frac{1}{n}}\nu -\mathcal{S}^*_{\frac{1}{m}} \nu ) \rangle) -1| }_{\widetilde{a}_{nm}^1(\xi)}\frac{\mu(d\xi )}{\|\xi \| \wedge 1}\Big)\\
    &\quad \quad +\beta_*\Big( \underbrace{\int_{\mathbb{S}^d_+} \exp(\langle y^m_u, \mathcal{S}^*_{\frac{1}{m}} \nu\xi + \xi \mathcal{S}^*_{\frac{1}{m}} \nu \rangle)-1) | 1_{\{\| \xi\| \geq \frac{1}{n}\}}-  1_{\{\| \xi\| \geq \frac{1}{m}\}}|\frac{\mu(d\xi )}{\|\xi \| \wedge 1}\Big)}_{\widetilde{a}^2_{nm}}\Bigg).
  \end{align*}
  Here, $y^m_u$ denotes the solution of $\partial_t y^m_u=R^m(y^m_t)$
  at time $u$ with $y_0=y$. Moreover $\widetilde{a}^1_{nm}(\xi)$ and
  $\widetilde{a}^2_{nm}$ can be chosen uniformly for all $u \leq t$
  and tend to $0$ as $n,m \to \infty$.  This is possible since for the
  chosen initial values $ y $ we obtain that $y^m_u$ is bounded on
  compact intervals in time uniformly in $ m $ (see \cite{cuctei:18}
  for details).  This together with dominated convergence for the
  first term (note that $b_{nm}(\xi) \widetilde{a}^1_{nm}(\xi) $ can
  be bounded by $\| \xi\| \wedge 1$) we thus infer
  \eqref{eq:CondII}. The conditions of Theorem
  \ref{thm:approximation_gen} are therefore satisfied and we obtain a
  generalized Feller semigroup whose generator is given by
  \eqref{eq:genmain}.
  
  For the second assertion we proceed as in the proof of Proposition
  ~\ref{prop:epsjumps}, the proof of the existence of $ X $ can be
  transferred verbatim. However, 
  one looses the existence of c\`agl\`ad paths of $f_n(\lambda)$ due to the  possible lack of
  finite mass of $ \nu $. Here, we only obtain c\`ag  trajectories (compare with Remark \ref{rem:pathproperties} and Remark \ref{rem:comment}).

  Concerning the third assertion, the affine transform formula follows
  simply from the convergence of the semigroups $P^n$ as asserted in
  Theorem \ref{thm:approximation_gen} by setting
  $y_t= \lim_{n \to \infty} y_t^{n}$, where $y_t^{n}$ solves
  $\partial_t y_t^{n}=R^{n}(y_t^{n} )$ in the mild sense with $R^{n}$
  given again by \eqref{eq:Riccatisimpl} with
  $\varepsilon =\frac{1}{n}$ and $\mu$ replaced by $F^n$. Since
  $\exp(\langle y_t, \lambda \rangle)$ is then also the unique
  solution of the abstract Cauchy problem for initial value
  $ \exp(\langle y_0,\lambda\rangle) $, i.e.  it solves
  \[
    \partial_t u(t,\lambda)= A u(t, \lambda), \quad u(0, \lambda) =
    \exp(\langle y_0, \lambda \rangle),
  \]
  where $A$ denotes the generator \eqref{eq:genmain}, we infer that
  $y_t$ satisfies $\partial_t y_t=R(y_t)$ with $R$ given by
  \eqref{eq:Riccatimain}. This is because
  $A\exp(\langle y_t, \lambda \rangle)=\exp(\langle y_t, \lambda
  \rangle)R(y_t)$.

  The fourth claim follows from statement (ii), property \eqref{eq:op}
  and the definition of $K$ in \eqref{eq:kernel}.
  
  Finally to prove (v), note that due to (iv) and the definition of
  the adjoint operator $\beta_*$ we have
  \[
    \Tr(u V_t)= \Tr(u \beta(\lambda_t))= \langle \beta_*(u),
    \lambda_t\rangle.
  \]
  Statement (iii) therefore implies that
  \[
    \mathbb{E}[e^{\Tr(u V_t)}]= e^{\langle y_t, \lambda_0\rangle},
  \]
  where the mild solution of $y_t$ can be expressed by
  \begin{align}\label{eq:yt}
    y_t= \mathcal{S}_t \beta_*(u) + \int_0^t \mathcal{S}_{t-s} \mathcal{R}(\langle \langle y_s, \nu \rangle \rangle)ds.
  \end{align}
  Hence, by definition of $\mathcal{R}$, $\mathfrak{R}$ and $h$, we
  find
  \begin{equation}\label{eq:Voltric}
    \begin{split}
      \langle y_t, \lambda_0\rangle&= \langle \mathcal{S}_t \beta_*(u) + \int_0^t \mathcal{S}_{t-s} \mathcal{R}(\langle \langle y_s, \nu \rangle \rangle)ds, \lambda_0\rangle\\
      &=\Tr(u\beta(\mathcal{S}^*_t\lambda_0))+ \int_0^t \Tr(\mathfrak{R}(\langle \langle y_s, \nu \rangle \rangle)\beta(\mathcal{S}^*_{t-s}\lambda_0)) ds\\
      &=\Tr(u h(t))+ \int_0^t \Tr(\mathfrak{R}(\langle \langle y_s,
      \nu \rangle \rangle)h(t-s))ds
    \end{split}
  \end{equation}
  From this and \eqref{eq:yt} it is easily seen that we can replace
  $\langle \langle y_s, \nu \rangle \rangle$ in \eqref{eq:Voltric} by
  a solution of the following Volterra Riccati equation
  \[
    \psi_t= u K(t)+ \int_0^t \mathfrak{R}(\psi_s)K(t-s).
  \]
  Note that we do not need to symmetrize here since we apply the trace
  and $h$ is symmetric. This proves the assertion.
\end{proof}

The following example illustrates how a multivariate Hawkes process
can easily be defined by means of \eqref{eq:X}.

\begin{example}\label{ex:Hawkes}
  Let $\beta$ and $\mathcal{S}^*$ be as of Example \ref{ex:canonical}.
  Define $\mu_{ii}(d\xi)=\delta_{e_{ii}}(d\xi)$ and $\mu_{ij}=0$ for
  $i\neq j$. Then the Volterra equation as of \eqref{eq:Voltrep} is
  given by
  \begin{equation*}
    \begin{split}
      V_t&= \int_0^{\infty}e^{-xt}\lambda_0(dx)
      + \int_0^t (K(t-s)V_s + V_s K(t-s)) ds \\
      &\quad +\int_0^t K(t-s) dN_s + \int_0^t dN_s K(t-s).
    \end{split}
  \end{equation*}
  Only the diagonal components of the matrix valued process $N$ jump and we can define 
  $\widehat{N}:= \diag(N)$ which is a process with
  values in $\mathbb{N}_0^d$. Its components jump by one and the
  compensator of $N_{ii}=\widehat{N}_i $ is given by
  $\int_0^{\cdot}V_{s,ii} ds$, which justifies the name multivariate
  Hawkes process.  Note that the components of $V$ are not independent
  if $\nu$ and in turn $K$ is not diagonal.
\end{example}

\section{Squares of matrix valued Volterra OU processes}\label{sec:square-OU}

As in the finite dimensional setting squares of Gaussian processes
provide us with important process classes for financial and
statistical modeling. In this section we outline this program in
utmost generality from a stochastic and analytic point of view. In
particular we consider \emph{continuous affine Volterra type
  processes} on $\mathbb{S}_+^d$, which we construct as squares of
matrix-valued Volterra Ornstein-Uhlenbeck (OU) processes (see Remark
\ref{rem:squareOU}). Following the finite dimensional analogon \cite{B:91}, we start by considering
matrix measure-valued OU-processes of the form
\begin{align}\label{eq:OU}
  d\gamma_t(dx)= \mathcal{A}^*\gamma_t(dx)dt+ dW_t \nu(dx), \quad \gamma_0 \in Y^*(\mathbb{R}^{n \times d}).
\end{align}
The underlying Banach space, denoted by
$Y^*(\mathbb{R}^{n \times d})$, is the space of finite
$\mathbb{R}^{n \times d}$-valued regular Borel measures on the
extended half real line
$\overline{\mathbb{R}}_+:=\mathbb{R}_+ \cup \{\infty\}$.  Together
with
\[
  \rho(\gamma) = 1+ {\|\gamma\|}_{Y^*(\mathbb{R}^{n \times d})}^2,
  \quad \gamma \in Y^*(\mathbb{R}^{n \times d}),
\]
where $\| \cdot\|_{Y^*(\mathbb{R}^{n \times d})}$ denotes the total
variation norm, this becomes a weighted space.  Moreover,
$\mathcal{A}^*$ is the generator of a strongly continuous semigroup
$\mathcal{S}^*$ on $Y^*(\mathbb{R}^{n \times d})$, which satisfies a
property analogous to~\eqref{eq:semigroupprop}, i.e., for elements
$A \in \mathbb{R}^{n \times d}$ it holds that
\begin{align}\label{eq:semigrouppropnonsym}
  \mathcal{S}^*_t(\gamma(\cdot) A^{\top})= (\mathcal{S}^*_t\gamma(\cdot)) A^{\top}
  \quad \text{ and } \quad \mathcal{S}^*_t(A\gamma^{\top}(\cdot) )=
  A(\mathcal{S}^*_t\gamma(\cdot))^{\top}.
\end{align}
The process $W$ is a $n\times d$ matrix of Brownian motions and
$\nu \in Y^*=:Y^*(\mathbb{S}^d)$ or $Z^*$, as defined in Section
\ref{sec:markovianlift_abstract} such that Assumption
\ref{ass:semigroup} holds true.  The predual space denoted by
$Y(\mathbb{R}^{n \times d})$ is given by
$C_{b}(\overline{\mathbb{R}}_+, \mathbb{R}^{n \times d})$ functions,
where we fix the pairing $\langle \cdot, \cdot \rangle $ as follows
\[
  \langle \cdot, \cdot \rangle: Y(\mathbb{R}^{n \times d}) \times
  Y^*(\mathbb{R}^{n \times d}) \to \mathbb{R}, \quad (y, \gamma)
  \mapsto \langle y, \gamma \rangle=\Tr\left(\int_0^{\infty}
    y^{\top}(x) \gamma(dx) \right) \, .
\]
Again $\Tr $ denotes the trace. We assume that all relevant properties
from Assumption \ref{ass:weak_existence} are translated to the current
setting.

\begin{remark}
  Observe the analogy to the process $\gamma$ defined in the
  introduction. If $\mathcal{A}^*=0$ and $\nu$ is supported on a
  finite space with $k$ points, then \eqref{eq:OU} is exactly the
  process from the introduction.
\end{remark}

\begin{proposition}\label{eq:propOU}
  For every $ \gamma_0 \in Y^*(\mathbb{R}^{n \times d})$ the SPDE
  \eqref{eq:OU} has a solution given by a generalized Feller semigroup
  on $\mathcal{B}^{\rho}(Y^*(\mathbb{R}^{n \times d}))$ associated to
  the generator of \eqref{eq:OU}. The mild formulation directly yields
  a stochastically strong solution
  \[
    \gamma_t (dx) = S_t^* \gamma_0 (dx) + \int_0^t dW_s S_{t-s}^* \nu
    (dx) \,
  \]
  where order matters, i.e. the matrix Brownian increment is applied
  to $ S_{t-s}^* \nu (dx) $ on the left. The integral is understood in
  the weak sense, i.e.~after pairing with
  $ y \in Y(\mathbb{R}^{n \times d}) $.
\end{proposition}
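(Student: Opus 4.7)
The plan is to construct the mild solution explicitly via variation of constants, verify that the induced transition operator satisfies Assumption~\ref{ass:generic}, and then invoke Theorem~\ref{theorem:strongcontprocess} to promote it to a generalized Feller semigroup. Since the mild formula is adapted to the augmented filtration generated by $W$, this will automatically deliver the stochastically strong solution claimed in the statement.

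The key observation for the stochastic convolution is that the noise is essentially finite-dimensional: decomposing $W_s=\sum_{i,j}W_s^{ij}e_{ij}$ in the standard basis of $\mathbb{R}^{n\times d}$, one writes
\begin{equation*}
Z_t:=\int_0^t dW_s\,\mathcal{S}^*_{t-s}\nu=\sum_{i=1}^{n}\sum_{j=1}^{d}\int_0^t \mathcal{S}^*_{t-s}(e_{ij}\nu)\,dW^{ij}_s.
\end{equation*}
Assumption~\ref{ass:semigroup}(i) gives $\mathcal{S}^*_{t-s}(e_{ij}\nu)\in Y^*(\mathbb{R}^{n\times d})$ for $s<t$, and Assumption~\ref{ass:semigroup}(ii) supplies the Bochner-$L^2$ integrability $\int_0^t\|\mathcal{S}^*_s(e_{ij}\nu)\|_{Y^*}^2\,ds<\infty$. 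Each summand is then a classical Wiener integral with deterministic $Y^*$-valued integrand against a scalar Brownian motion, which I would build as the $L^2$-limit of simple-function approximations. Separability of norm-bounded balls in $Y^*$ (Section~\ref{sec:not}) guarantees that the limit is a bona fide Gaussian random element of $Y^*(\mathbb{R}^{n\times d})$, so that $\gamma_t:=\mathcal{S}_t^*\gamma_0+Z_t$ is well defined.

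To discharge Assumption~\ref{ass:generic}, I would exploit that $\gamma_t$ splits into a deterministic term linear in $\gamma_0$ and a centered Gaussian remainder whose law does not depend on $\gamma_0$. Fernique's theorem applied to $Z_t$ yields $\mathbb{E}\|Z_t\|_{Y^*}^2<\infty$, uniformly on $[0,\varepsilon]$; combined with strong continuity of $\mathcal{S}^*$ this produces the bound $\mathbb{E}[\rho(\gamma_t)]\leq C\rho(\gamma_0)$ demanded in (i). Item~(ii) follows from $Z_t\to 0$ in $L^2$ and $\mathcal{S}^*_t\gamma_0\to\gamma_0$ in norm as $t\to 0$. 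For (iii) I would test against the cylinder class from Theorem~\ref{theorem:boundedweakcontapprox}, where weak-$*$-continuity of $\gamma_0\mapsto\mathbb{E}[f(\gamma_t)]$ is inherited from the weak-$*$-continuity of $\gamma_0\mapsto\mathcal{S}^*_t\gamma_0$ (available by the analogue of Assumption~\ref{ass:weak_existence}) combined with the explicit Gaussian characteristic functional of $Z_t$. Theorem~\ref{theorem:strongcontprocess} then concludes the generalized Feller property.

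The main obstacle is making rigorous sense of the $Y^*$-valued stochastic convolution when the total variation norm places $Y^*$ outside the class of Hilbert or type-$2$ Banach spaces, where a direct It\^o isometry would fail. The decisive simplification is that only the $nd$ scalar Brownian motions $W^{ij}$ drive the equation, reducing the construction to deterministic-integrand Wiener integrals in $Y^*$ that are controlled solely by Assumption~\ref{ass:semigroup}(ii). The ``order matters'' clause in the statement is respected automatically because in each summand the matrix product $e_{ij}\cdot\nu(dx)$ is formed before the stochastic integration, preserving the left-action of the Brownian increment on $\mathcal{S}_{t-s}^*\nu(dx)$.
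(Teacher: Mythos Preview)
Your route differs from the paper's. The paper does \emph{not} construct the mild solution first and then verify Assumption~\ref{ass:generic}; instead it obtains the generalized Feller semigroup by approximating the Brownian motion $W$ by finite-activity jump processes and invoking the approximation machinery of Section~\ref{sec:approximationTheorems} (in the spirit of Proposition~\ref{prop:jump_perturbation} and \cite[Theorem~4.16]{cuctei:18}). Only afterwards does it record the mild formula, and there the stochastic convolution is interpreted \emph{weakly}: for each fixed $y\in Y(\mathbb{R}^{n\times d})$ one has the scalar integral $\int_0^t\langle y,\,dW_s\,\mathcal{S}^*_{t-s}\nu\rangle$, whose integrand is deterministic and square-integrable by Assumption~\ref{ass:semigroup}(ii). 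The paper never claims a norm-convergent $Y^*$-valued Wiener integral.

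Your proposal, by contrast, hinges on building $Z_t=\int_0^t dW_s\,\mathcal{S}^*_{t-s}\nu$ as a genuine $Y^*(\mathbb{R}^{n\times d})$-valued random element via $L^2(\Omega;Y^*)$-limits of simple functions, and then applying Fernique to control $\mathbb{E}\|Z_t\|_{Y^*}^2$. The gap is that your ``decisive simplification'' does not actually resolve the Banach-geometry obstacle you yourself flag. Decomposing into the $nd$ scalar Brownian motions $W^{ij}$ still leaves you with integrals $\int_0^t \mathcal{S}^*_{t-s}(e_{ij}\nu)\,dW^{ij}_s$ of deterministic $Y^*$-valued integrands against scalar Brownian motions. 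For the simple-function approximations to be Cauchy in $L^2(\Omega;Y^*)$ one needs an inequality of the form $\mathbb{E}\big\|\sum_k a_k\,\Delta W_k\big\|_{Y^*}^2\lesssim\sum_k\|a_k\|_{Y^*}^2\,|I_k|$, i.e.\ a type-$2$ estimate, and the space of finite matrix-valued measures with the total-variation norm is of type~$1$, not type~$2$. The separability you invoke from Section~\ref{sec:not} is weak-$*$ metrizability of norm balls, which says nothing about norm convergence of Gaussian sums. Consequently Fernique cannot be applied before you know $Z_t$ is a Gaussian element of $Y^*$, and that is precisely what is at stake. The paper's jump-approximation route sidesteps this entirely: the semigroup is produced on $\mathcal{B}^\rho(Y^*(\mathbb{R}^{n\times d}))$ without ever needing a strong $Y^*$-valued stochastic integral, and the mild formula is a posteriori identification in the weak sense only.
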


\begin{proof}
  The construction of the generalized Feller process can be done by
  jump approximation of the Brownian motion similarly as in
  \cite[Theorem 4.16]{cuctei:18}. Notice here that we consider the
  process on the whole space $Y^*(\mathbb{R}^{n \times d})$. So no
  issues with state space constraints occur.

  The right hand side of the stochastically strong formulation defines
  -- after pairing with $ y \in Y(\mathbb{R}^{n \times d}) $ -- almost
  surely a continuous linear functional with value
  \[
    \langle y , S_t^* \gamma_0 \rangle + \int_0^t \langle y , dW_s
    S_{t-s}^* \nu \rangle \, ,
  \]
  since the integrand of the stochastic integral is deterministic and
  in $ L^2 $ for each $ t \geq 0 $.
\end{proof}

In order to define the actual process of interest, we need to
introduce some further notations: for elements in
$\gamma \in Y^{\ast}(\mathbb{R}^{n \times d})$ we define
\[
  (\gamma \widehat{\otimes} \gamma)(\cdot,
  \cdot):=\gamma^{\top}(\cdot) \gamma(\cdot).
\]
The corresponding \emph{contracted}, i.e.~one matrix multiplication is
performed, algebraic tensor product is denoted by
$Y^{\ast}(\mathbb{R}^{n\times d})
\widehat{\otimes}Y^{\ast}(\mathbb{R}^{n\times d})$ and we set
\begin{align}\label{eq:statespacesquare}
  \widehat{\mathcal{E}}:=\big \{ \gamma \widehat{\otimes} \gamma \in Y^{\ast}(\mathbb{R}^{n\times d}) \widehat{\otimes}Y^{\ast}(\mathbb{R}^{n\times d}) \big\}.
\end{align}
This corresponds to the space of finite $\mathbb{S}_+^{d}$-valued,
rank $n$, product measures on
$\overline{\mathbb{R}}_+ \times \overline{\mathbb{R}}_+$. We shall
introduce a particular dual topology on $ \widehat{\mathcal{E}} $,
namely $ \sigma( \widehat{\mathcal{E}},Y \otimes Y) $, where the
corresponding pairing is given by
\begin{align*}
  (y_1 \otimes y_2, \gamma_1\widehat{\otimes} \gamma_2) & \mapsto \langle y_1 \widehat{\otimes} y_2, \gamma_1 \widehat{\otimes} \gamma_2\rangle \\
                                                        &=\Tr\left(\int_0^{\infty}   y_1^{\top}(x_1)y_2(x_2) \gamma_1^{\top}(dx_1)\gamma_2(dx_2) \right) \, .
\end{align*}
We denote the pre-dual cone by
\begin{align}\label{eq:polarsquare}
  -\widehat{\mathcal{E}}_*= \big \{ y \widehat{\otimes} y \in Y(\mathbb{R}^{n\times d}) \widehat{\otimes} Y(\mathbb{R}^{n\times d}) \big \} \, ,
\end{align}
where we use again the contracted algebraic tensor product
corresponding to the following matrix multiplication of
$\mathbb{R}^{n\times d}$ valued functions
\[
  (y\widehat{\otimes} y)(\cdot,\cdot)= y^{\top}(\cdot) y(\cdot),\quad
  y \in Y(\mathbb{R}^{n \times d}) \, .
\]
The minus on the left hand side of \eqref{eq:polarsquare} is to obtain
elements in the polar cone.

Let us now define the actual process of interest, namely
\begin{align}\label{eq:lambda}
  \lambda_t(dx_1, dx_2):= \gamma_t^{\top}(dx_1)\gamma_t(dx_2)= \gamma_t(dx_1) \widehat{\otimes} \gamma_t(dx_2).
\end{align}
Note again the analogy to the Wishart process $\lambda$ defined in the
introduction. The process \eqref{eq:lambda} clearly takes values in
$\widehat{\mathcal{E}}$ as defined in \eqref{eq:statespacesquare}.  We
will now show that we can define a Volterra type process by
considering projections on $\mathbb{S}_+^d$.  Applying It\^o's
formula, we see that $\lambda_t(dx_1, dx_2)$ satisfies the following
equation
\begin{equation}\label{eq:lambda1}
  \begin{split}
    d\lambda_t(dx_1, dx_2)&= \left(\mathcal{A}_1^*\lambda_t(dx_1, dx_2) + \mathcal{A}_2^*\lambda_t(dx_1, dx_2) + n \nu(dx_1)\nu(dx_2) \right)dt\\
    &\quad + \nu(dx_1) dW_t^{\top} \gamma_t(dx_2) +
    \gamma_t(dx_1)^{\top} dW_t \nu(dx_2),
  \end{split}
\end{equation}
where
$\mathcal{A}^*_1 \lambda_t(dx_1, dx_2)=\mathcal{A}^*\lambda_t(\cdot,
dx_2)(dx_1)$ and analogously for $\mathcal{A}^*_2$. Note that for
$\mathcal{A}^* =0$ this is completely analogous to
\eqref{eq:gammafinite}.

By a lot of abuse of notation, but parallel with \cite{B:91} and
Equation \eqref{eq:newBM}-\eqref{eq:trueWish}, we can also write
\begin{equation}\label{eq:infiniteWish}
  \begin{split}
    d\lambda_t(dx_1, dx_2)&= \left(\mathcal{A}_1^*\lambda_t(dx_1, dx_2) + \mathcal{A}_2^*\lambda_t(dx_1, dx_2) + n \nu(dx_1)\nu(dx_2) \right)dt\\
    &\quad + \int_0^{\infty} \int_{0}^{\infty} \sqrt{\nu \widehat{\otimes} \nu}(dx_1,dx) dB_t^\top(dy,dx) \sqrt{\lambda_t}(dy,dx_2) \\
    & \quad + \int_0^{\infty} \int_0^{\infty}
    \sqrt{\lambda_t}(dx_1,dx) dB_t(dx,dy) \sqrt{\nu \widehat{\otimes}
      \nu} (dy,dx_2) \, ,
  \end{split}
\end{equation}
where heuristically $B(dx,dy)$ is $d \times d$ matrix of Brownian
fields. We shall not develop a framework where this notation makes
sense, but continue with proving that $ \lambda $ is actually a
generalized Feller process, which should be considered the correct
infinite dimensional version of a Wishart process.

By only a slight abuse of notation, we understand $\mathcal{A}^*$, and
in the sequel also $\mathcal{S}^*$ and other linear operators, as
operators acting on both $\mathbb{S}^{d}$-valued measures as well as
$\mathbb{R}^{d \times n}$-valued or $\mathbb{R}^{n \times d}$-valued
ones as in \eqref{eq:OU}.  The mild formulation of \eqref{eq:lambda1},
denoting the semigroup generated by $\mathcal{A}_1^*+ \mathcal{A}_2^*$
by $\mathcal{S}_t^{*,\widehat{\otimes}}$, then reads as
\begin{align*}
  \lambda_t(dx_1, dx_2)&= \mathcal{S}_t^{*,\widehat{\otimes}}\lambda_0(dx_1, dx_2) + n \int_0^t \mathcal{S}^{*,\widehat{\otimes}}_{t-s}\nu(dx_1) \nu(dx_2) ds\\
                       &\quad + \int_0^t \mathcal{S}^{*,\widehat{\otimes}}_{t-s}(\nu(dx_1)dW^{\top}_s \gamma_s(dx_2) + \gamma_s(dx_1)^{\top} dW_t \nu(dx_2))\\
                       &=\mathcal{S}_t^{*, \widehat{\otimes}}\lambda_0(dx_1, dx_2) + n \int_0^t (\mathcal{S}^{*}_{t-s}\nu(dx_1)) (\mathcal{S}^{*}_{t-s}\nu(dx_2) )ds\\
                       &\quad + \int_0^t (\mathcal{S}^{*}_{t-s}\nu(dx_1))dW^{\top}_s  (\mathcal{S}^{*}_{t-s} \gamma_s(dx_2)) \\
                       &\quad + \int_0^t ( \mathcal{S}^{*}_{t-s}\gamma_s(dx_1))^{\top} dW_s (\mathcal{S}^{*}_{t-s}\nu(dx_2)) \,    ,
\end{align*}
where the second equality follows from property
\eqref{eq:semigrouppropnonsym}.

Let now $\beta$ be a linear operator from $Y^*(F)$ to $ F$ where $F$
stands here for $\mathbb{R}^{n\times d}$, or $\mathbb{S}^d$ with the
property that for a constant matrix $A$ with appropriate matrix
dimensions we have
\begin{align}\label{eq:betanew}
  \beta( A \gamma(\cdot))=A\beta(\gamma(\cdot)),\quad \beta(  \gamma(\cdot)A)=\beta(\gamma(\cdot))A.
\end{align}

By means of $\beta$, define now an operator $\widehat{\beta}$ acting
on $\mathbb{R}^{d \times d}$ valued product measures as follows
\begin{align}\label{eq:betanonsym}
  \widehat{\beta}( \gamma_1^{\top}(\cdot) \gamma_2(\cdot))= \beta(\gamma_1 (\cdot))^{\top}
  \beta(\gamma_2 (\cdot)),
\end{align}
where $\gamma_1$ and $\gamma_2$ are either in
$Y^*(\mathbb{R}^{n\times d})$ or in $Y^*(\mathbb{S}^d)$ (in the latter
case the transpose is not needed). Note that \eqref{eq:betanonsym}
implies that $ \widehat{\beta}( \gamma^{\top}(\cdot) \gamma(\cdot))$
is $\mathbb{S}^d_+$-valued.  Applying $\widehat{\beta}$ to $\lambda$
we find
\begin{align*}
  \widehat{\beta}(\lambda_t)
  &=\widehat{\beta}(\mathcal{S}_t^{*,\widehat{\otimes}}\lambda_0) + n \int_0^t \beta(\mathcal{S}^{*}_{t-s}\nu)\beta(\mathcal{S}^{*}_{t-s}\nu )ds\\
  &\quad + \int_0^t \beta(\mathcal{S}^{*}_{t-s}\nu)dW^{\top}_s \beta (\mathcal{S}^{*}_{t-s} \gamma_s) + \int_0^t \beta( \mathcal{S}^{*}_{t-s}\gamma_s)^{\top} dW_s\beta (\mathcal{S}^{*}_{t-s}\nu).
\end{align*}
Defining as in Equation \eqref{eq:kernel} an $\mathbb{S}^d$-valued
kernel via
\[
  K(t)=\beta(\mathcal{S}_t^*\nu),
\]
we obtain the following generalized $\mathbb{S}^d_+$-valued Volterra
equation
\begin{equation}
  \begin{split}\label{eq:Voltwishart}
    V_t&:=\widehat{\beta}(\lambda_t) =\widehat{\beta}(\mathcal{S}_t^{*,\widehat{\otimes}}\lambda_0) + n \int_0^t K(t-s) K(t-s)ds\\
    &\quad + \int_0^t K(t-s)dW^{\top}_s \beta (\mathcal{S}^{*}_{t-s}
    \gamma_s) + \int_0^t \beta( \mathcal{S}^{*}_{t-s}\gamma_s)^{\top}
    dW_s K(t-s),
  \end{split}
\end{equation}
which we call \emph{Volterra Wishart process} in the following
definition.

\begin{definition}\label{def:VolterraWishart}
  For $\beta$, $\widehat{\beta}$ as given in
  \eqref{eq:betanew}-\eqref{eq:betanonsym} and an
  $\mathbb{S}^d$-valued kernel $K(t)$ defined by
  $K(t) =\beta(\mathcal{S}_t^*\nu)$, we call the process defined in
  \eqref{eq:Voltwishart}, \emph{Volterra Wishart process}.
\end{definition}

\begin{remark}\label{rem:squareOU}
  \begin{enumerate}
  \item Note that $\beta(\gamma_t)$ defines an
    $\mathbb{R}^{n\times d}$-valued Volterra OU process, that is,
    \begin{align}\label{eq:VolterraOU}
      X_t:=\beta(\gamma_t)=\beta(\mathcal{S}^*_t \gamma_0)+ \int_0^t dW_s K(t-s).
    \end{align}
    By the definition of $\widehat{\beta}$, the Volterra Wishart
    process
    \[
      V_t=\widehat{\beta}(\lambda_t)= \beta(\gamma_t (\cdot))^{\top}
      \beta(\gamma_t(\cdot))=X^{\top}_t X_t
    \]
    is thus the matrix square of a Volterra OU process, which
    justifies the terminology.
  \item Note that different lifts of the Volterra OU process given in
    \eqref{eq:VolterraOU} are possible, e.g.~the forward process lift
    $f_t(x):=\mathbb{E}[X_{t+x}|\mathcal{F}_t]$. Then, $f_t(0)=X_t$
    and similarly as in \cite[Section 5.2]{cuctei:18} it can be shown
    that $f$ is an infinite dimensional OU process that solves the
    following SPDE (in the mild sense)
    \[
      df_t(x)=\frac{d}{dx} f_t(x) dt + dW_t K(x), \quad f_0(x) =
      \beta(\mathcal{S}^*_x \gamma_0),
    \]
    on a Hilbert space $H$ of absolutely continuous functions (AC)
    with values in $\mathbb{R}^{n \times d}$, precisely
    $ H=\left\{ f \in AC(\mathbb{R}_+, \mathbb{R}^{n \times d}) \, |
      \, \int_0^{\infty} \|f'(x)\|^2 \alpha(x) dx < \infty \right\} $
    where $ \alpha > 0 $ denotes a weight function (compare
    \cite{F:01}). We can then set $\lambda_t(x,y)=f_t^{\top}(x)f_t(y)$
    and define the same Volterra Wishart process as in
    \eqref{eq:Voltwishart} by
    $V_t:=\lambda_t(0,0)=X_t^{\top}X_t$. 
    By It\^o's formula and variation of constants its dynamics can then equivalently be expressed via
    \begin{equation}
      \begin{split}\label{eq:Wishalt}
        V_t:=\lambda_t(0,0)&= f^{\top}_0(t)f_0(t)+ n\int_0^t K(t-s)K(t-s)ds\\
        &\quad +\int_0^t K(t-s)dW^{\top}_sf_s(t-s) + \int_0^t
        f^{\top}_s(t-s)dW_s K(t-s).
      \end{split}
    \end{equation}
    Comparing \eqref{eq:Wishalt} and \eqref{eq:Voltwishart} yields
    \begin{align}\label{eq:relbeta}
      \beta(\mathcal{S}_x^*\gamma_t)=f_t(x)=\mathbb{E}[X_{t+x}| \mathcal{F}_t], \quad x,t \geq 0.
    \end{align}
  \item In the case when $\beta$ and $\mathcal{S}^*$ are as in Example
    \ref{ex:canonical}, \eqref{eq:Voltwishart} reads as
    \begin{align*}
      \int_{\mathbb{R}^2} \lambda(dx_1, dx_2) &= \int_{\mathbb{R}^2}e^{-(x_1+x_2)t}\lambda_0(dx_1,dx_2) + n \int_0^t K(t-s) K(t-s)ds\\
                                              &\quad + \int_0^t\int_0^{\infty} K(t-s)dW^{\top}_s  e^{-x(t-s)} \gamma_s(dx)\\
                                              &\quad + \int_0^t \int_0^{\infty} e^{-x(t-s)} \gamma^{\top}_s(dx)dW_s K(t-s).
    \end{align*}
    Hence by~\eqref{eq:relbeta},
    $\int_0^{\infty} e^{-x(t-s)}\gamma_s(dx)=\mathbb{E}[X_t |
    \mathcal{F}_s]$. This yields exactly equation
    \eqref{eq:VolterraWishart} considered in the introduction. Note that if $\nu$ and in turn $K$ is chosen as in Remark \ref{rem:frac}, this Volterra Wishart process has exactly the roughness properties desired in rough covariance modeling.

  \end{enumerate}
\end{remark}

In the following remark we list several properties of Volterra Wishart
processes.
\begin{remark}
  \begin{enumerate}
  \item Note that the marginals of $V$ are Wishart distributed as they
    arise from squares of Gaussians.

  \item In order to bring \eqref{eq:lambda1} in a ``standard'' Wishart
    form (with the matrix square root) as in \eqref{eq:Wishart} by
    replacing $\gamma(dx)$ by $\sqrt{\lambda}(dx,dy)$ new notation has
    to be introduced (compare with \eqref{eq:infiniteWish}).
  \item Nevertheless, both the drift and the diffusion characteristic
    of $\lambda$ depend linearly only on $\lambda$, e.g.
    \begin{align*}
      \frac{d[\lambda_{ij}(dx_1,dx_2), \lambda_{kl}(dy_1,dy_2)]_t}{dt}&= (K(x_1) K(y_1))_{ik} \lambda_{t,jl}(dx_2,dy_2)\\
                                                                      &\quad +(K(x_1) K(y_2))_{il} \lambda_{t,jk}(dx_2,dy_1)\\
                                                                      &\quad + (K(x_2)K(y_1))_{jk}\lambda_{t,il}(dx_1,dy_2)\\
                                                                      &\quad +
                                                                        (K(x_2)K(y_2))_{jl}\lambda_{t,ik}(dx_1,dy_1) \, ,
    \end{align*}
    which indicates that $(\lambda_t)_{t\geq 0}$ is Markovian on its
    own. This is shown rigorously below.
  \end{enumerate}
\end{remark}

Using Theorem \ref{th:invariantspace} we now show that $\lambda$ is a
generalized Feller process on $(\widehat{\mathcal{E}},\widehat{\rho})$
with weight function $\widehat{\rho}$ satisfying
\begin{align}\label{eq:weightsquare}
  \widehat{\rho}(\gamma \widehat{\otimes} \gamma) = \rho(\gamma).
\end{align}
We also prove that this generalized Feller process is \emph{affine},
in the sense that its Laplace transform is exponentially affine in the
initial value. The process $\lambda$ can therefore be viewed as an
\emph{infinite dimensional Wishart process} on $\widehat{\mathcal{E}}$
analogously to \cite{B:91, CFMT:11}.

\begin{theorem}
  The process $\lambda$ defined in \eqref{eq:lambda} is Markovian on
  $\widehat{\mathcal{E}}$. The corresponding semigroup is a
  generalized Feller semigroup on
  $ \mathcal{B}^{\widehat{\rho}}(\widehat{\mathcal{E}}) $, where
  $\widehat{\rho}$ satisfies \eqref{eq:weightsquare}. Moreover, for
  $y \in Y(\mathbb{R}^{n\times d})$
  \begin{align}\label{eq:Laplacetrafo}
    \mathbb{E}_{\lambda_0} \left[\exp\left( - \langle y \widehat{\otimes} y  , \lambda_t  \rangle\right)\right]= \exp(-\phi_t- \langle \psi_t, \lambda_0\rangle),
  \end{align}
  where $\psi$ and $\phi$ satisfy the following Riccati differential
  equations, namely $\psi_0=y\widehat{\otimes} y $ and
  $\partial_t \psi_t= R(\psi_t)$ in the mild sense with
  $R: \widehat{\mathcal{E}}_* \to \widehat{\mathcal{E}}_*$ given by
  \begin{align*}
    R(y \widehat{\otimes} y)(x_1,x_2) &= \mathcal{A} y(x_1)
                                        \widehat{\otimes} y (x_2)+ y(x_1) \widehat{\otimes} \mathcal{A} y
                                        (x_2)\\
                                      &\quad - 2 \int_0^{\infty} \int_0^{\infty} y(dx_1) \widehat{\otimes}y(dx) \nu
                                        \widehat{\otimes} \nu (dx,dy) y(dy)\widehat{\otimes}y(dx_2)
  \end{align*}
  and $\phi_0=0$ and $\partial_t \phi_t= F(\psi_t)$ with
  $F: \widehat{\mathcal{E}}_* \to \mathbb{R}$ given by
  \[
    F(y \widehat{\otimes} y) = n \langle y \widehat{\otimes} y, \nu
    \widehat{\otimes} \nu\rangle .
  \]
\end{theorem}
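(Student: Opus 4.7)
The plan is to invoke Theorem~\ref{th:invariantspace} with $X=Y^*(\mathbb{R}^{n\times d})$, the surjective continuous map
$q:X\to\widehat{\mathcal{E}}$, $\gamma\mapsto\gamma\widehat{\otimes}\gamma$, and weight functions $\rho_1=\rho$, $\rho_2=\widehat{\rho}$. By the very definition~\eqref{eq:weightsquare} we have $\widehat{\rho}\circ q=\rho$, so the weight hypothesis is satisfied with equality and we are actually in the situation of Corollary~\ref{cor:closed_invariant_subspace}, which dispenses us from verifying~\eqref{eq:assumptionP1_rho}. The semigroup $P^{(1)}$ on $\mathcal{B}^\rho(Y^*(\mathbb{R}^{n\times d}))$ is the generalized Feller semigroup of the matrix-measure valued Ornstein--Uhlenbeck process produced by Proposition~\ref{eq:propOU}. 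The Markov property of $\lambda_t=q(\gamma_t)$ on $\widehat{\mathcal{E}}$ will then be a direct consequence of the identity $\mathbb{E}[f(q(\gamma_t))\mid\mathcal{F}_s]=(P^{(2)}_{t-s}f)(q(\gamma_s))$ furnished by Theorem~\ref{th:invariantspace}.

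The core step is to verify~\eqref{eq:assumptionP1}, i.e.\ that $\gamma\mapsto \mathbb{E}_\gamma[f(\gamma_t^{\top}\gamma_t)]$ depends on $\gamma$ only through $q(\gamma)$, for $f$ in a dense subspace $D$ of $\mathcal{B}^{\widehat{\rho}}(\widehat{\mathcal{E}})$. As $D$ I would take the linear span of the exponential functions $f_\psi(\lambda)=\exp(-\langle\psi,\lambda\rangle)$ with $\psi=y\widehat{\otimes}y\in\widehat{\mathcal{E}}_*$ and $y$ suitably regular, density following from Theorem~\ref{theorem:boundedweakcontapprox} together with a Stone--Weierstrass-type argument (noting that $\langle y\widehat{\otimes}y,\gamma^{\top}\gamma\rangle\geq 0$ so these functions are bounded by $1$). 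Proposition~\ref{eq:propOU} gives the explicit strong splitting $\gamma_t=\mathcal{S}_t^*\gamma+\xi_t$ with $\xi_t=\int_0^t dW_s\,\mathcal{S}_{t-s}^*\nu$ a centred Gaussian independent of $\gamma$, so the pairing $\langle y\widehat{\otimes}y,\gamma_t^{\top}\gamma_t\rangle$ is a Gaussian quadratic form in $\gamma_t$. A completion-of-squares argument combined with property~\eqref{eq:semigrouppropnonsym} (which yields $(\mathcal{S}_t^*\gamma)^{\top}(\mathcal{S}_t^*\gamma)=\mathcal{S}_t^{*,\widehat{\otimes}}q(\gamma)$) then produces
\[
\mathbb{E}_\gamma\bigl[\exp(-\langle y\widehat{\otimes}y,\gamma_t^{\top}\gamma_t\rangle)\bigr]=\exp\bigl(-\phi_t-\langle\psi_t,q(\gamma)\rangle\bigr),
\]
whose right-hand side factors through $q$. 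This verifies~\eqref{eq:assumptionP1}, yields the lifted generalized Feller semigroup $P^{(2)}$ on $\mathcal{B}^{\widehat{\rho}}(\widehat{\mathcal{E}})$, and simultaneously establishes the affine Laplace transform formula~\eqref{eq:Laplacetrafo}.

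The Riccati equations for $\psi_t$ and $\phi_t$ I would then read off from the generator of $P^{(2)}$. Applying It\^o's formula to $\exp(-\langle\psi,\lambda_t\rangle)$ along the SPDE~\eqref{eq:lambda1}, the deterministic drift $\mathcal{A}_1^*\lambda+\mathcal{A}_2^*\lambda+n\nu\widehat{\otimes}\nu$ produces, after dualising $\mathcal{A}^*$ to $\mathcal{A}$, the linear contribution $\mathcal{A}y(x_1)\widehat{\otimes}y(x_2)+y(x_1)\widehat{\otimes}\mathcal{A}y(x_2)$ together with the $\phi$-part $F(\psi)=n\langle y\widehat{\otimes}y,\nu\widehat{\otimes}\nu\rangle$, while the quadratic variation of the martingale term $\nu\,dW^{\top}\gamma+\gamma^{\top}dW\,\nu$ contributes the quadratic-in-$y$ term with $\nu\widehat{\otimes}\nu$ kernel appearing in $R$. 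Differentiating $\exp(-\phi_t-\langle\psi_t,\cdot\rangle)$ in $t$ and matching coefficients forces $\partial_t\psi_t=R(\psi_t)$ and $\partial_t\phi_t=F(\psi_t)$; mild well-posedness of this system follows by a Picard iteration analogous to Proposition~\ref{prop:epsjumps}(iii).

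The hard part will be showing that the Riccati flow preserves the tensor form $\psi_t=y_t\widehat{\otimes}y_t\in\widehat{\mathcal{E}}_*$, since the Gaussian Laplace calculation naturally delivers a general quadratic form on $Y^*(\mathbb{R}^{n\times d})$ and the quadratic term in $R$ is not manifestly rank-one preserving. I expect this to be resolved by the orthogonal invariance of the OU dynamics: for any $n\times n$ orthogonal matrix $O$, the matrix Brownian motion is invariant under $W\mapsto OW$, hence $O\gamma^{(\gamma_0)}$ has the same law as $\gamma^{(O\gamma_0)}$; together with $(O\gamma_t)^{\top}(O\gamma_t)=\gamma_t^{\top}\gamma_t$ this forces the conditional law of $q(\gamma_t)$ to be constant on the left $O(n)$-orbits of the initial datum, which is precisely the symmetry needed to constrain the quadratic form appearing in the Laplace transform to lie in $\widehat{\mathcal{E}}_*$ and hence to be of tensor form.
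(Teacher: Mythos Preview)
Your overall strategy matches the paper's exactly: apply Theorem~\ref{th:invariantspace} and Corollary~\ref{cor:closed_invariant_subspace} with the squaring map $q(\gamma)=\gamma\widehat{\otimes}\gamma$, use the exponentials $\lambda\mapsto\exp(-\langle y\widehat{\otimes}y,\lambda\rangle)$ as the dense set $D$, and verify~\eqref{eq:assumptionP1} by computing the Laplace transform of $\gamma_t\widehat{\otimes}\gamma_t$. The difference lies in how this last computation is carried out and how the Riccati equations are extracted.

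The paper does not use an abstract completion-of-squares followed by an It\^o-formula generator identification. Instead it performs the Gaussian integral explicitly (this is Lemma~\ref{lem:Laplacetrafo}), obtaining in closed form
\[
\psi_t=(2q_t(y\widehat{\otimes}y)+\operatorname{Id}_d)^{-1}\bigl(\mathcal{S}_t y\,\widehat{\otimes}\,\mathcal{S}_t y\bigr),
\qquad
\phi_t=\tfrac{n}{2}\log\det\bigl(2q_t(y\widehat{\otimes}y)+\operatorname{Id}_d\bigr),
\]
with $q_t(y\widehat{\otimes}y)=\int_0^t\!\!\int\!\!\int(\mathcal{S}^*_s\nu)(dx_1)\,y(x_1)^{\top}y(x_2)\,(\mathcal{S}^*_s\nu)(dx_2)\,ds$. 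The Riccati equations are then simply read off by differentiating these closed-form expressions in $t$. This buys two things relative to your route. First, the factoring through $q(\gamma_0)$ is \emph{manifest} from the explicit formula, so no separate symmetry argument is needed. Second, and more importantly, your ``hard part'' --- showing that the Riccati flow preserves the tensor cone $\widehat{\mathcal{E}}_*$ --- evaporates: the explicit $\psi_t$ visibly has the form (positive $d\times d$ matrix) acting on $\mathcal{S}_t y\,\widehat{\otimes}\,\mathcal{S}_t y$, so its membership in the relevant cone is immediate and no $O(n)$-invariance argument is required. Your orthogonal-invariance observation is correct and would also work, but the paper's explicit calculation is shorter and simultaneously delivers the Riccati right-hand sides without any It\^o calculus on the infinite-dimensional SPDE~\eqref{eq:lambda1}.
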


\begin{proof}
  We apply Theorem \ref{th:invariantspace} and Corollary
  \ref{cor:closed_invariant_subspace} with
  \[
    q: \mathcal{Y}^*(\mathbb{R}^{n\times d}) \to
    \widehat{\mathcal{E}}, \, \gamma \mapsto \gamma \widehat{\otimes}
    \gamma= \gamma(\cdot)^{\top}\gamma(\cdot).
  \]
  Observe that this is a continuous map, since we use the dual
  topology $ \sigma(\widehat{\mathcal{E}}, Y \otimes Y) $ on
  $\widehat{\mathcal{E}}$ and the respective polar
  $\widehat{\mathcal{E}}_*$ defined by
  \eqref{eq:polarsquare}. Consider now the following set of Fourier
  basis elements
  \begin{equation*}
    \begin{split}
      \widehat{\mathcal{D}}=\{ f_y: \widehat{\mathcal{E}} \to [0,1];
      \lambda \mapsto \exp( - \langle y \widehat{\otimes} y , \lambda
      \rangle) \, | &\, y \in Y(\mathbb{R}^{n\times d})\}
    \end{split}
  \end{equation*}
  which is dense in $\mathcal{B}^{\widehat{\rho}}(\widehat{\mathcal{E}})$ by
  the very definition of the dual topology.  We check now that the
  generalized Feller semigroup $P^{\text{(OU)}}$ corresponding to
  \eqref{eq:OU} satisfies Assumption \eqref{eq:assumptionP1} for
  $f \in \widehat{\mathcal{D}}$ , i.e. for every
  $f \in \widehat{\mathcal{D}}$ there exists some $g$ such that
  \begin{align}\label{eq:POU}
    P^{\text{(OU)}}_t (f \circ q) = g \circ q \, .
  \end{align}
  Hence we need to compute
  $ \mathbb{E}_{\gamma_0} \left[\exp\left( - \langle y
      \widehat{\otimes} y , \gamma_t \widehat{\otimes}\gamma_t
      \rangle\right)\right].  $ By Lemma \ref{lem:Laplacetrafo} this
  expression is given by \eqref{eq:Laplacetrafoexplicit}. Therefore
  \eqref{eq:POU} is clearly satisfied. This proves the first
  assertion.  Concerning the affine property, we can deduce from Lemma
  \ref{lem:Laplacetrafo} that $\psi$ and $\phi$ are given by
  \begin{align*}
    \psi_t&=(2q_t(y \widehat{\otimes} y )+\operatorname{Id}_d)^{-1}(\mathcal{S}_t y \widehat{\otimes} \mathcal{S}_t y), \\
    \phi_t&=\frac{n}{2}\log\det(2q_t(y \widehat{\otimes} y )+ \operatorname {Id}_d).
  \end{align*}
  with $q_t$ given in  Lemma
  \ref{lem:Laplacetrafo}. 
  Taking derivatives then leads to the form of the Riccati differential
  equations.
\end{proof}

The following lemma provides an explict expression for the Laplace
transform of $\gamma_t \widehat{\otimes}\gamma_t$. This ressembles not
surprisingly the Laplace transfrom of a non-central Wishart
distribution with $n$ degrees of freedom.

\begin{lemma}\label{lem:Laplacetrafo}
  Let $ \gamma $ be an Ornstein-Uhlenbeck process as defined in
  \eqref{eq:OU}. Then for $y \in Y(\mathbb{R}^{n\times d})$, the
  Laplace transform of $\gamma_t \widehat{\otimes}\gamma_t$ is given
  by
  \begin{equation}
    \begin{split}\label{eq:Laplacetrafoexplicit}
      \mathbb{E}_{\gamma_0}\left[\exp(-\langle y \widehat{\otimes} y ,
        \gamma_t \widehat{\otimes}\gamma_t \rangle)\right] &=
      \det(2q_t(y \widehat{\otimes} y )+ \operatorname {Id}_d)^{-\frac{n}{2} }\\
      &\quad \times \exp(-\langle (2q_t(y \widehat{\otimes} y
      )+\operatorname{Id}_d)^{-1}(\mathcal{S}_t y \widehat{\otimes}
      \mathcal{S}_t y) , \gamma_0 \widehat{\otimes} \gamma_0 \rangle),
    \end{split}
  \end{equation}
  where
  $q_t(y \widehat{\otimes} y ) = \int_0^t \int_0^{\infty}
  \int_0^{\infty} \mathcal{S}^*_{s}\nu(dx_1) y^{\top}(x_1)
  y(x_2)\mathcal{S}^*_{s}\nu(dx_2) ds $.

\end{lemma}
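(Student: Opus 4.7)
The lemma is proved by a direct Gaussian computation. From the mild representation in Proposition~\ref{eq:propOU},
\[
\gamma_t = \mathcal{S}^*_t\gamma_0 + \int_0^t dW_s\,\mathcal{S}^*_{t-s}\nu,
\]
$\gamma_t$ is a Gaussian $\mathbb{R}^{n\times d}$-valued random measure whose rows $(\gamma_t)_{\alpha,:}$ are driven solely by the $\alpha$-th row of the Brownian matrix $W$; hence the $n$ rows are independent with a common centred law, differing only in the deterministic means $\mu^\alpha := (\mathcal{S}^*_t\gamma_0)_{\alpha,:}$. Combined with
\[
\gamma_t\widehat\otimes\gamma_t = \gamma_t^{\top}\gamma_t = \sum_{\alpha=1}^{n}(\gamma_t)_{\alpha,:}^{\top}(\gamma_t)_{\alpha,:},
\]
this independence factorises the Laplace transform into an $n$-fold product of single-row integrals, reducing the problem to computing, for one fixed $\alpha$, the expectation of $\exp\!\bigl(-\langle y\widehat\otimes y,(\gamma_t)_{\alpha,:}^{\top}(\gamma_t)_{\alpha,:}\rangle\bigr)$.

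For a single row the pairing is a quadratic-plus-linear-plus-constant functional of a Gaussian $\mathbb{R}^{1\times d}$-valued measure. By It\^o isometry applied to the mild representation, the covariance that appears in the quadratic part, after pairing with $y$, is precisely the $d\times d$ matrix $q_t(y\widehat\otimes y)$ of the statement, which is symmetric and positive semidefinite because $q_t = \int_0^t B_s^{\top}B_s\,ds$ with $B_s := \int y(x)\mathcal{S}^*_s\nu(dx)\in\mathbb{R}^{n\times d}$. The classical Gaussian Laplace transform
\[
\mathbb{E}\bigl[\exp\bigl(-\tfrac12 X^{\top}AX - \beta^{\top}X\bigr)\bigr] = \det(I+\Sigma A)^{-1/2}\exp\!\Bigl(\tfrac12(\Sigma^{-1}\mu-\beta)^{\top}(A+\Sigma^{-1})^{-1}(\Sigma^{-1}\mu-\beta) - \tfrac12\mu^{\top}\Sigma^{-1}\mu\Bigr),
\]
valid for $X\sim N(\mu,\Sigma)$, then gives each row factor in closed form; multiplying the $n$ identical determinantal prefactors yields the claimed $\det(2q_t(y\widehat\otimes y)+\operatorname{Id}_d)^{-n/2}$.

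The last step is to combine the $n$ mean-dependent exponents into the single measure-level pairing of \eqref{eq:Laplacetrafoexplicit}. Using the duality $\langle y,\mathcal{S}^*_t\gamma_0\rangle=\langle\mathcal{S}_t y,\gamma_0\rangle$ (rewriting pairings against $\mathcal{S}^*_t\gamma_0$ as pairings of $\mathcal{S}_t y$ against $\gamma_0$) together with the product structure of $\gamma_0\widehat\otimes\gamma_0$, the row-by-row contributions collapse into exactly $-\langle(2q_t(y\widehat\otimes y)+\operatorname{Id}_d)^{-1}(\mathcal{S}_t y\widehat\otimes\mathcal{S}_t y),\gamma_0\widehat\otimes\gamma_0\rangle$. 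To make the infinite-dim Gaussian manipulations rigorous, I first treat the case where $y$ and $\nu$ are supported on finitely many atoms of $\overline{\mathbb{R}}_+$ (so that all Gaussians involved are genuinely finite-dimensional random matrices) and then extend to general $y\in Y(\mathbb{R}^{n\times d})$ by density of cylindrical functions (Theorem~\ref{theorem:boundedweakcontapprox}) combined with dominated convergence on both sides of \eqref{eq:Laplacetrafoexplicit}. The main obstacle is the careful algebraic identification of the quadratic form arising from $\langle y\widehat\otimes y,\cdot\rangle$ with the canonical matrix $q_t(y\widehat\otimes y)$: the pairing naturally produces traces of the form $\operatorname{Tr}(A^2)$ rather than the more familiar $\operatorname{Tr}(A^{\top}A)$, so the symmetric tensor-product algebra needs some care when unfolding the exponent, but once the row independence has reduced the calculation to one row, the rest is essentially a completion of the square in the Gaussian integral followed by the duality rewriting above.
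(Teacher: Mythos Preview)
Your proposal is correct and follows essentially the same strategy as the paper: exploit the row independence of the Gaussian matrix $\gamma_t$ to factor the Laplace transform into $n$ identical single-row integrals, then evaluate each factor as the Laplace transform of a quadratic form in a $d$-dimensional Gaussian. The paper's presentation differs only cosmetically: it first treats the case $\mathcal{A}^*=0$ (so $\gamma_t=\gamma_0+W_t\nu$) and carries out the Gaussian integral by hand against the density, obtaining $b=\int\int\nu(dx_2)y^{\top}(x_1)y(x_2)\nu(dx_1)$ and the formula $\det(2bt+\operatorname{Id}_d)^{-n/2}\exp(-\langle(\operatorname{Id}_d+2bt)^{-1}(y\widehat\otimes y),\gamma_0\widehat\otimes\gamma_0\rangle)$, and then passes to general $\mathcal{A}^*$ by replacing $bt$ with $q_t$ and $\gamma_0$ with $\mathcal{S}^*_t\gamma_0$; you instead work directly with the mild form, invoke the standard Gaussian Laplace formula, and add a finite-atom approximation step for rigor that the paper omits.
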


\begin{proof}
  Assume for simplicity first that $\mathcal{A}^*$ is equal to
  $0$. Then \eqref{eq:OU} becomes
  \[
    \gamma_t(dx)= \gamma_0(dx) + W_t\nu(dx).
  \]
  Fix $y \in Y(\mathbb{R}^{n\times d})$ such that
  $\int_0^{\infty} y(x) \nu(dx)$ is well defined.  We then have
  \begin{align*}
    \langle y \widehat{\otimes} y  , \gamma_t \widehat{\otimes}\gamma_t  \rangle &=
                                                                                   \langle y \widehat{\otimes} y ,(\gamma_0 + W_t\nu) \widehat{\otimes} (\gamma_0 + W_t\nu)\rangle\\
                                                                                 &= \langle y \widehat{\otimes} y , \gamma_0  \widehat{\otimes} \gamma_0 \rangle + \langle y \widehat{\otimes} y , \gamma_0 \widehat{\otimes}  W_t\nu \rangle +\langle y \widehat{\otimes} y ,   W_t\nu \widehat{\otimes} \gamma_0 \rangle\\
                                                                                 &\quad + \langle y \widehat{\otimes} y , W_t\nu \widehat{\otimes} W_t\nu \rangle.
  \end{align*}
  Note now that
  \begin{align*}
    \langle y \widehat{\otimes} y , \gamma_0 \widehat{\otimes}  W_t\nu \rangle&=
                                                                                \Tr\left(\left(W_t
                                                                                \int_0^{\infty} \int_0^{\infty} \nu(dx_2) y^{\top}(x_1) y(x_2) \gamma_0^{\top}(dx_1)\right) \right)\\
                                                                              &=:\Tr(W_t a),\\
    \langle y \widehat{\otimes} y ,   W_t\nu \widehat{\otimes} \gamma_0 \rangle                                                                              &= \Tr\left(\left(\int_0^{\infty} \int_0^{\infty}\gamma_0(dx_2)  y^{\top}(x_1) y(x_2) \nu(dx_1)\right) W^{\top}_t\right)\\
                                                                              &=:\Tr(a_1 W^{\top}_t)=\Tr( W_t a_1^{\top})=\Tr(W_t a),\\
    \langle y \widehat{\otimes} y , W_t\nu \widehat{\otimes} W_t\nu \rangle&= 
                                                                             \Tr\left(\left(
                                                                             \int_0^{\infty} \int_0^{\infty} \nu(dx_2) y^{\top}(x_1) y(x_2)\nu(dx_1)\right) W_t^{\top}W_t\right)\\
                                                                              &=: \Tr(b W_t^{\top} W_t),
  \end{align*}
  where $a\in \mathbb{R}^{d \times n}$,
  $a_1 \in \mathbb{R}^{n \times d}$, $b \in \mathbb{R}^{d\times d}$
  and $a=a_1^{\top}$.
  
  For the following calculation let $n=1$. Then using these
  expressions, we find
  \begin{align*}
    &\mathbb{E}\left[\exp(-\langle y \widehat{\otimes} y  , \gamma_t \widehat{\otimes}\gamma_t  \rangle)\right]\\
    &\quad =\exp(-\langle y \widehat{\otimes} y , \gamma_0  \widehat{\otimes} \gamma_0 \rangle)\mathbb{E}\left[ \exp( -2 \Tr(W_ta) - \Tr(bW_t^{\top} W_t)\right]\\
    &\quad =\exp(-\langle y \widehat{\otimes} y , \gamma_0  \widehat{\otimes} \gamma_0 \rangle)\frac{1}{(2\pi)^{\frac{d}{2}}  t^{\frac{d}{2}}}
      \int_{\mathbb{R}^{1 \times d}} e^{-2\Tr(xa) - \Tr(bx^{\top}x) -\frac{1}{2t} x x^{\top}}dx\\
    &\quad =\exp(-\langle y \widehat{\otimes} y , \gamma_0  \widehat{\otimes} \gamma_0 \rangle)\\
    &\quad \quad \times \frac{1}{\det(2b+ \frac{1}{t} \operatorname {Id}_d)^\frac{1}{2} t^{\frac{d}{2}}}\frac{1}{(2\pi)^{\frac{d}{2}} }
      \int_{\mathbb{R}^{1 \times d}} e^{-2 xa - \frac{1}{2}x (2b+ \frac{1}{t} \operatorname {Id}_d) x^{\top} }\det(2b+ \frac{1}{t} \operatorname {Id}_d)^{\frac{1}{2}}dx\\
    &\quad =\frac{1}{\det(2b+ \frac{1}{t} \operatorname {Id}_d)^\frac{1}{2} t^{\frac{d}{2}}}\exp(-\langle y \widehat{\otimes} y , \gamma_0  \widehat{\otimes} \gamma_0 \rangle)\exp(2 a^{\top} (2b+ \frac{1}{t} \operatorname {Id}_d)^{-1} a),
  \end{align*}
  where in the last line we used the formula for the moment generating
  function of a Gaussian random variable with covariance
  $(2b+ \frac{1}{t} \operatorname {Id}_d)^{-1}$.  Simplifiying further
  yields
  \begin{align}
    &\mathbb{E}\left[\exp(-\langle y \widehat{\otimes} y  , \gamma_t \widehat{\otimes}\gamma_t  \rangle)\right] \notag\\
    &\quad=\frac{1}{\det(2b+ \frac{1}{t} \operatorname {Id}_d)^\frac{1}{2} t^{\frac{d}{2}}}
      \exp(\langle (2 b(2b+ \frac{1}{t} \operatorname {Id}_d)^{-1}-\operatorname{Id}_d) (y \widehat{\otimes} y), \gamma_0  \widehat{\otimes} \gamma_0 \rangle) \notag \\
    &\quad =\frac{1}{\det(2bt+ \operatorname {Id}_d)^{\frac{1}{2} }}
      \exp(\langle -(\operatorname{Id}_d + 2bt)^{-1} (y \widehat{\otimes} y), \gamma_0  \widehat{\otimes} \gamma_0 \rangle). \label{eq:equationWishart}
  \end{align}
  For general $n$, note that we can write
  \[
    W^{\top}_t W_t= \sum_{j=1}^n W^{\top}_{j,t} W_{j,t},
  \]
  where the $W_j$ are the rows of $W$ and thus take values in
  $\mathbb{R}^{1 \times d }$.  Similary
  \[
    \Tr(W_t a)= \Tr\left(\sum_{j=1}^n W_{j,t} \left( \int_0^{\infty}
        \int_0^{\infty} \nu(dx_2) y^{\top}(x_1) y(x_2)
        \gamma_{0,j}^{\top}(dx_1)\right)\right) =:\sum_{j=1}^n W_{j,t}
    a_j,
  \]
  where $\gamma_{0,j}$ are the rows of $\gamma_0$. Using the
  independence of all $W_j$ and applying \eqref{eq:equationWishart}
  then leads to
  \[
    \mathbb{E}\left[\exp(-\langle y \widehat{\otimes} y , \gamma_t
      \widehat{\otimes}\gamma_t \rangle)\right] = \frac{1}{\det(2bt+
      \operatorname {Id}_d)^{\frac{n}{2} }} \exp(-\langle
    (\operatorname{Id}_d + 2bt)^{-1} (y \widehat{\otimes} y), \gamma_0
    \widehat{\otimes} \gamma_0 \rangle).
  \]
  The general case for $\mathcal{A}^* \neq 0$ can now be traced back
  to this situation.  Indeed, by the variation of constants formula,
  $\gamma_t$ is given by
  \[
    \gamma_t=\mathcal{S}^*_t \gamma_0 + \int_0^t dW_s
    \mathcal{S}^*_{t-s}\nu(dx).
  \]
  Therefore we need to replace $bt$ by
  \[
    q_t=\int_0^t\int_0^{\infty} \int_0^{\infty}
    \mathcal{S}^*_{t-s}\nu(dx_1) y^{\top}(x_1)
    y(x_2)\mathcal{S}^*_{t-s}\nu(dx_2) ds
  \]
  and $\gamma_0$ by $\mathcal{S}^*_t \gamma_0$.  This then yields
  \eqref{eq:Laplacetrafoexplicit}. Note that this now holds for
  general $y \in Y(\mathbb{R}^{n \times d})$ even if
  $\int_0^{\infty} y(x) \nu(dx)$ is not necessarily well defined.
\end{proof}

\section{(Rough) Volterra type affine covariance models}\label{sec:cov}

The goal of this section is to apply the above constructed affine covariance models for multivariate stochastic volatility models with $d$ assets. 
We exemplify this with the Volterra Wishart process of Section \ref{sec:square-OU} and define a (rough) multivariate Volterra Heston type model with possible jumps in the price process. Roughness can be achieved by specifing $\nu$ and in turn the kernel of the Volterra Wishart process as in Remark \ref{rem:frac}. 
The log-price process denoted by $P$ and taking values in $\mathbb{R}^d$  evolves according to 
\begin{equation}
\begin{split}\label{eq:logprice}
dP_t&=-\frac{1}{2}\diag(V_t)dt - \int_{\mathbb{R}^d} (e^{\xi}- \mathbf{1} - \xi) \Tr(V_t m(d\xi)) + X_t^{\top} dB_t\\
&\quad  + \int_{\mathbb{R}^d} \xi (\mu^P(d\xi) - \Tr(V_t m(d\xi)),
\end{split}
\end{equation}
where $X_t$ denotes the Volterra OU process defined in Remark \ref{rem:squareOU}, $\mathbf{1}$ the vector in $\mathbb{R}^d$ with all entries being $1$ and $e^{\xi}$ has to be understood componentwise. Moreover, $B_t$ is  an $\mathbb{R}^n$-valued Brownian motion, which can be correlated with the matrix Brownian motion $W$ appearing in \eqref{eq:OU} as follows
\[
B_t= W_t\rho + \sqrt{(1 -\rho^{\top} \rho)} \widetilde{B}_t.
\]
Here, $\widetilde{B}_t$ is  an $\mathbb{R}^n$-valued Brownian motion independent of $W$ and $\rho \in \mathbb{R}^d$. Moreover, $\mu^P$ denotes the random measure of the jumps with compensator $\Tr(V m(d\xi))$, where $V$ is the Volterra Wishart process of \eqref{eq:Voltwishart} and $m$ a positive semi-definite measure supported on $\mathbb{R}^d$. 

As a corollary of Section 5 and \cite[Section 5]{C:11} we obtain the following result, namely that the log-price process together with the infinite dimensional Wishart process $\lambda$ given in \eqref{eq:lambda}  is an affine Markov process.

Before formulating the precise statement, note that the continuous covariation\footnote{Here, the brackets stand for the covariation and not for the pairing.} $\langle P_{i}, \lambda_{kl}(dx_1, dx_2)\rangle_t $  is given by
\begin{align*}
\frac{\langle P_{i}, \lambda_{kl}(dx_1, dx_2)\rangle_t}{dt}&= (\beta^{\top}(\gamma_t)\gamma_t(dx_1))_{il}(\nu(dx_2)\rho)_k\\
&\quad +(\beta^{\top}(\gamma_t)\gamma_t(dx_1))_{ik}(\nu(dx_2)\rho)_l,
\end{align*}
where $\gamma$ is the infinite dimensional OU-process of \eqref{eq:OU}.
Note that $\beta^{\top}(\gamma_t)\gamma_t(dx_1)$ can also be written as linear map from $\widehat{\mathcal{E}} \to Y^*(\mathbb{S}^d)$ which we denote be $\widetilde{\beta}$, i.e.
\begin{align} \label{eq:betatilde}
\widetilde{\beta}(\lambda_t)(dx_1)=\beta^{\top}(\gamma_t)\gamma_t(dx_1).
\end{align}
In the standard example of \ref{ex:canonical}, we have
$ \widetilde{\beta}(\lambda)(dx_1) = \int_{x_2} \lambda(dx_1, dx_2)$. The adjoint operator of $\widetilde{\beta}$ from $Y(\mathbb{S}^d)$ to $Y(\mathbb{R}^{n\times d}) \widehat{\otimes} Y(\mathbb{R}^{n\times d})$ is denoted by $ \widetilde{\beta}_*$ and given by
\[
\langle \widetilde{\beta}(\lambda), y \rangle= \langle \lambda, \widetilde{\beta}_*(y)\rangle, \quad y \in Y(\mathbb{S}^d),
\]
where the brackets are the pairings in the respective spaces. With this notation we are now ready to state the result. Its proof is a combination of the results of Section~\ref{sec:square-OU} and \cite[Section 5]{C:11}.

\begin{corollary}
The joint process $(\lambda, P)$ with $\lambda$ defined in \eqref{eq:lambda} 
and $P$ defined in \eqref{eq:logprice} 
is Markovian with state space $(\widehat{\mathcal{E}}, \mathbb{R}^d)$. It is affine in the sense that for $(y, v) \in Y(\mathbb{R}^{n \times d}) \times \mathbb{R}^d$, we have
 \begin{align}\label{eq:Laplacetrafojoint}
    \mathbb{E}_{\lambda_0, P_0} \left[\exp\left( - \langle y \widehat{\otimes} y  , \lambda_t  \rangle + \text{\emph{i}} v^{\top} P_t\right) \right]= \exp(-\phi_t- \langle \psi_t, \lambda_0\rangle + \text{\emph{i}} v^{\top} P_0).
  \end{align}
 The function $\psi$ satisfies the following Riccati differential
  equations, namely $\psi_0=y\widehat{\otimes} y $ and
  $\partial_t \psi_t= R(\psi_t,\text{\emph{i}} v )$ in the mild sense with
  $R: \widehat{\mathcal{E}}_* \times \text{\emph{i}} \mathbb{R}^d \to \widehat{\mathcal{E}}_*$ given by
  \begin{align*}
    R(y \widehat{\otimes} y, \text{\emph{i}} v)(x_1,x_2) &= \mathcal{A} y(x_1)
                                        \widehat{\otimes} y (x_2)+ y(x_1) \widehat{\otimes} \mathcal{A} y
                                        (x_2)\\
                                      &\quad - 2 \int_0^{\infty} \int_0^{\infty} y(dx_1) \widehat{\otimes}y(dx) \nu
                                        \widehat{\otimes} \nu (dx,dy) y(dy)\widehat{\otimes}y(dx_2) \\
                                        &\quad+ \frac{1}{2} \sum_{i=1}^d \text{\emph{i}} v_i \widehat{\beta}_* (e_i e_i^{\top})(x_1, x_2)\\ &\quad+ \widehat{\beta}_*(\int_{\mathbb{R}^d}(\text{\emph{i}} v^{\top} (e^{\xi}- \mathbf{1} - \xi) )m(d\xi))(x_1,x_2)\\
                                        &\quad + \frac{1}{2} \widehat{\beta}_*(v v^{\top})(x_1, x_2)\\ &\quad + \widetilde{\beta}_* (\int_0^{\infty} y(\cdot) \widehat{\otimes}y(x)\nu(dx))(x_1,x_2) \rho \text{\emph{i}} v^{\top}\\ &\quad + \text{\emph{i}} v \rho^{\top} \widetilde{\beta}_* (\int_0^{\infty} \nu(dx) y(\cdot) \widehat{\otimes}y(x))(x_1,x_2) \\
                                        &\quad -\widehat{\beta}_*( \int_{\mathbb{R}^d}( \exp(\text{\emph{i}} v^{\top} \xi ) -1 - \text{\emph{i}} v^{\top} \xi  ) m(d\xi))(x_1, x_2),
  \end{align*}
  where $\widehat{\beta}_*$ and $\widetilde{\beta}_*$ are the adjoint operators of $\widehat{\beta}$ given in \eqref{eq:betanonsym} and $\widetilde{\beta}$ given in \eqref{eq:betatilde}, respectively.
  The function $\phi$ satisfies $\phi_0=0$ and $\partial_t \phi_t= F(\psi_t)$ with
  $F: \widehat{\mathcal{E}}_* \to \mathbb{R}$ given by
  \[
    F(y \widehat{\otimes} y) = n \langle y \widehat{\otimes} y, \nu
    \widehat{\otimes} \nu\rangle .
  \]
\end{corollary}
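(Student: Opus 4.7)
The plan is to combine the generalized Feller construction of $\lambda$ from Section~\ref{sec:square-OU} with the standard ``affine extension by a log-price'' argument, applied formally as in \cite[Section 5]{C:11}, but now on the enlarged state space $\widehat{\mathcal{E}}\times\mathbb{R}^d$. First I would enlarge the weight function to $\widehat{\rho}(\mu,p):=\widehat{\rho}(\mu)(1+|p|^2)$ and note that $P$ has linear drift in $V=\widehat{\beta}(\lambda)$ and bounded second moment given uniform moment bounds on $\lambda$ derived exactly as in the proof of Proposition~\ref{prop:existenceY*}. Since the dynamics of $\lambda$ do not depend on $P$ and the coefficients of $P$ depend on $\lambda$ in an affine way (via $V$, $\diag(V)$, $X^\top X=V$ and the jump compensator $\Tr(V m(d\xi))$), the joint process $(\lambda,P)$ is Markovian: the argument is a verbatim transcription of the approximation scheme of Theorem~\ref{thm:approximation_gen} used in the proof of Theorem~\ref{thm:main}, truncating the jump measure $m$ on $\mathbb{R}^d$ and perturbing the Feller semigroup of $\lambda$ tensored with the semigroup of translation/Gaussian motion in the $P$-variable.

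Next I would compute the generator $\mathcal{L}$ on Fourier--exponential test functions
\[
f_{y,v}(\mu,p):=\exp(-\langle y\widehat{\otimes} y,\mu\rangle+\mathrm{i} v^\top p),\qquad y\in Y(\mathbb{R}^{n\times d}),\ v\in\mathbb{R}^d,
\]
which lie in a core by the same density argument as in Theorem~\ref{thm:main}. By It\^o's formula applied to \eqref{eq:lambda1} and \eqref{eq:logprice}, the generator decomposes into (a) the generator of $\lambda$ (producing the first three terms of $R$, exactly as in the proof of the theorem preceding Lemma~\ref{lem:Laplacetrafo}), (b) the drift/quadratic variation terms of $P$ against itself, yielding $\tfrac{1}{2}\sum_i \mathrm{i} v_i\,\widehat{\beta}_*(e_ie_i^\top)$, the Brownian term $\tfrac{1}{2}\widehat{\beta}_*(vv^\top)$ (using $X^\top X=V$ and $\langle P_i,P_j\rangle=V_{ij}$), and the jump contributions $\widehat{\beta}_*\!\bigl(\int(\mathrm{i} v^\top(e^\xi-\mathbf{1}-\xi))m(d\xi)\bigr)$ and $-\widehat{\beta}_*\!\bigl(\int(e^{\mathrm{i} v^\top\xi}-1-\mathrm{i} v^\top\xi)m(d\xi)\bigr)$, and (c) the cross-covariation between $\lambda$ and $P$ coming from the correlation $W\rho$ in $B$; this last piece is computed from the covariation formula
\[
\tfrac{d}{dt}\langle P_i,\lambda_{kl}(dx_1,dx_2)\rangle_t
=(\widetilde{\beta}(\lambda_t)(dx_1))_{il}(\nu(dx_2)\rho)_k+(\widetilde{\beta}(\lambda_t)(dx_1))_{ik}(\nu(dx_2)\rho)_l
\]
given in the excerpt, and after pairing against $y\widehat{\otimes} y$ and moving $\widetilde{\beta}$ to its adjoint yields precisely the two terms $\widetilde{\beta}_*(\int y(\cdot)\widehat{\otimes} y(x)\nu(dx))\rho\,\mathrm{i} v^\top$ and $\mathrm{i} v\,\rho^\top\widetilde{\beta}_*(\int\nu(dx)y(\cdot)\widehat{\otimes} y(x))$.

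The affine transform formula \eqref{eq:Laplacetrafojoint} then follows as in part~(iv) of Proposition~\ref{prop:epsjumps} and part~(iii) of Theorem~\ref{thm:main}: by uniqueness of the abstract Cauchy problem associated with the generator of the joint generalized Feller semigroup, the ansatz $u(t,\mu,p)=\exp(-\phi_t-\langle\psi_t,\mu\rangle+\mathrm{i} v^\top p)$ solves $\partial_t u=\mathcal{L}u$ if and only if $\psi_t$ satisfies $\partial_t\psi_t=R(\psi_t,\mathrm{i} v)$ with $R$ as claimed and $\phi_t$ solves $\partial_t\phi_t=F(\psi_t)$; since $P$ enters the generator only through the term $\mathrm{i} v^\top p$ that is invariant under the dynamics of the log-price drift structure (the drift produces $-\tfrac{1}{2}\Tr(V\operatorname{diag}(\mathrm{i} v))$ which is absorbed into the $\widehat{\beta}_*(e_ie_i^\top)$ contribution), $v$ itself does not evolve and only $\psi,\phi$ need to be tracked. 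Local existence of the mild solution of the Riccati equation is obtained by the same Picard iteration as in the proof of Proposition~\ref{prop:epsjumps}(iii), and global existence in $\widehat{\mathcal{E}}_*$ follows from the bound $|u(t,\mu,p)|\le 1$.

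The main obstacle is bookkeeping of the cross terms: carefully identifying each covariation contribution as the image under $\widehat{\beta}_*$ or $\widetilde{\beta}_*$ of an element of the appropriate finite-dimensional cone, and checking that the ``correlation'' terms involving $\rho$ land in $\widehat{\mathcal{E}}_*$ so that the Riccati flow stays in the admissible cone. Everything else is a direct application of the machinery already developed: generalized Feller extension (Theorems~\ref{thm:approximation}--\ref{thm:approximation_gen}), the invariance principle of Proposition~\ref{prop:invariance}, and the uniqueness argument for the Kolmogorov backward equation used in Theorem~\ref{thm:main}.
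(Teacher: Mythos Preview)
Your proposal is correct and follows essentially the same route as the paper: the paper does not give an explicit proof but simply states that the result is ``a combination of the results of Section~\ref{sec:square-OU} and \cite[Section 5]{C:11}'', and your sketch is a faithful and more detailed unpacking of exactly that combination---extending the generalized Feller semigroup of $\lambda$ by the log-price component, computing the generator on Fourier exponentials, and reading off the Riccati equations via the abstract Cauchy problem as in Proposition~\ref{prop:epsjumps}(iv) and Theorem~\ref{thm:main}(iii). One small caveat: once $\mathrm{i}v\neq 0$ the Riccati flow is complex-valued, so ``global existence in $\widehat{\mathcal{E}}_*$'' should be phrased in a suitable complexification rather than the real cone itself.
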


\begin{remark}
In a similar spirit one can define multivariate affine covariance models with  the affine Volterra jump process $V$ given in \eqref{eq:Voltrep}. 
The log-price process (under some risk neutral measure) evolves then according to 
\begin{align*}
dP_t&=-\frac{1}{2}\diag(V_t)dt - \int_{\mathbb{R}^d} (e^{\xi}- \mathbf{1} - \xi) \Tr(V_t m(d\xi)) + \sqrt{V}_t dB_t \\
&\quad + \int_{\mathbb{R}^d} \xi (\mu^P(d\xi) - \Tr(V_t m(d\xi)),
\end{align*}
where $B$ is a $d$-dimensional Brownian motion and  the jump measure $m$ of $P$ and $\mu$ of the Markovian lift $\lambda$ as given in \eqref{eq:SPDEmain} can be the marginals of some common measure supported on $\mathbb{S}^d_+ \times \mathbb{R}^d $.
\end{remark}

\end{document}